\newcommand{\reals}{{\mathbb R}}
\newcommand{\complex}{{\mathbb C}}
\newcommand{\cstar}{{\mathbb C}^*}
\newcommand{\cn}{{\mathbb C}^n}
\newcommand{\rsphere}{{\mathbb P}^1}
\newcommand{\projspace}[1]{{\mathbb P}^{#1}}
\newcommand{\pone}{{\mathbb P}^1}
\newcommand{\pseven}{{\mathbb P}^7}
\newcommand{\integers}{{\mathbb Z}}
\newcommand{\scriptO}{{\mathcal O}}
\DeclareMathOperator{\PSL}{PSL}
\DeclareMathOperator{\PSU}{PSU}
\newcommand{\Sym}[1]{{\mathfrak S}_{#1}}
\newcommand{\mbsgp}{{\PSL_2(\complex)}}
\DeclareMathOperator{\Liesl}{sl}
\theoremstyle{plain}
\newtheorem{theorem}{Theorem}[subsection]
\newtheorem{maintheorem}{Theorem}[section]
\newtheorem{lemma}[theorem]{Lemma}
\newtheorem{proposition}[theorem]{Proposition}
\newtheorem{corollary}[theorem]{Corollary}
\newtheorem{maincorollary}[maintheorem]{Corollary}
\theoremstyle{definition}
\newtheorem{definition}[theorem]{Definition}
\newtheorem{maindefinition}[maintheorem]{Definition}
\newtheorem{example}[theorem]{Example}
\theoremstyle{remark}
\newtheorem{remark}[theorem]{Remark}
\newtheorem{mainremark}[maintheorem]{Remark}
\numberwithin{equation}{subsection}
\begin{document}

\date{12th November 2012}
\author{Alexander Hanysz}
\address{School of Mathematical Sciences, University of Adelaide,
    Adelaide SA 5005, Australia}
\email{alexander.hanysz@adelaide.edu.au}
\title[Cubic rational maps]
      {Holomorphic flexibility properties \\
       of the space of cubic rational maps}
\begin{abstract}
  For each natural number~$d$,
  the space $R_d$ of rational maps of degree~$d$ on the Riemann sphere
  has the structure of a complex manifold.
  The topology of these manifolds has been extensively studied.
  The recent development of Oka theory
  raises some new and interesting questions about their complex structure.
  We apply geometric invariant theory to the cases of degree~2 and~3,
  studying a double action of the Möbius group on $R_d$.
  The action on $R_2$ is transitive,
  implying that $R_2$ is an Oka manifold.
  The action on $R_3$ has $\complex$ as a categorical quotient;
  we give an explicit formula for the quotient map
  and describe its structure in some detail.
  We also show that $R_3$ enjoys the holomorphic flexibility properties
  of strong dominability and $\complex$-connectedness.
\end{abstract}
\subjclass[2010]{Primary 32Q28. Secondary 32H02, 32Q55, 54C35, 58D15}
% 32Q28=Several complex variables--complex manifolds--Stein manifolds
% 32H02=Several complex variables--holomorphic mappings and correspondences--Holomorphic mappings, (holomorphic) embeddings and related questions
% 32Q55=Several complex variables--complex manifolds--Topological aspects of complex manifolds
% 54C35=General topology--Maps and general types of spaces defined by maps--Function spaces
% 58D15=Global analysis--Spaces and manifolds of mappings--Manifolds of mappings

\keywords{Stein manifold, Oka manifold, rational function, holomorphic flexibility,
cross-ratio, geometric invariant theory, categorical quotient,
$\complex$-connected, dominable, strongly dominable}

\maketitle

{ \scriptsize
\tableofcontents
}

\section{Introduction and statement of results} \label{section:intro}

The space of rational maps on the Riemann sphere
can be given the structure of a complex manifold.
The topology of this manifold
(the compact-open topology)
has been studied extensively,
beginning with the work of Segal~\cite{Segal-1979}.
In this paper we study rational maps from a geometric point of view,
motivated by the recent development of Oka theory.
In particular, we are interested in the
holomorphic flexibility properties of
dominability and $\complex$-connectedness (defined below),
which can be viewed as opposite to Kobayashi hyperbolicity.

We write $R_d$ for the set of rational maps
of degree~$d$.
Each such map can be written as a quotient of two relatively prime polynomials
whose maximum degree is $d$.
The space $\scriptO(\pone,\pone)$
of all rational maps is the union of $R_d$ for $d=0,1,2,\ldots$;
the $R_d$ are exactly the connected components of this space.
Section~\ref{section:context} describes
the complex structure on $R_d$
and gives a brief overview of the relevant concepts from Oka theory.

\medskip

\textbf{Basic question:}
\textit{Is $\scriptO(\pone,\pone)$ an Oka manifold?}

\medskip

The question can be approached one degree at a time:
is each component $R_d$ an Oka manifold?
We apply geometric invariant theory,
using the results of Snow~\cite{Snow-1982}.
In particular, the Möbius group $\mbsgp$
acts on $R_d$ in two ways, by precomposition and postcomposition
(see for example Ono and Yamaguchi~\cite{Ono-Yamaguchi-2003}).
We combine these two actions into a two-sided action
of $\mbsgp\times\mbsgp$, described in Section~\ref{section:actions}.

For low degree,
we have $R_0=\pone$ and $R_1=\mbsgp$,
both of which are known examples of Oka manifolds.
For $d=2$, the two-sided group action is transitive:

\begin{maintheorem}[{\cite[Proposition~2.1]{Guest-et-al-1995}}] \label{thm:r2oka}
The space $R_2$ of rational maps of degree~2
is a complex homogeneous manifold.
\end{maintheorem}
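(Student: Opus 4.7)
The plan is to exhibit a degree $2$ map that every other degree $2$ map can be mapped to by the two-sided action of $\mbsgp \times \mbsgp$, which will establish transitivity and hence show that $R_2$ is a homogeneous space for a complex Lie group. The natural candidate is $f_0(z) = z^2$, whose critical points and critical values are both $\{0, \infty\}$.

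First I would recall the Riemann--Hurwitz count: a degree $2$ rational map $f \colon \pone \to \pone$ has exactly two critical points (counted with multiplicity, but in fact they are simple and distinct since $\sum (e_p - 1) = 2$ and each $e_p \le 2$), and hence two critical values. Given such an $f$ with critical points $\{p_1, p_2\}$ and critical values $\{q_1, q_2\}$, I would exploit the well-known fact that $\mbsgp$ acts sharply $3$-transitively on $\pone$, so in particular $2$-transitively on unordered pairs of distinct points. Choose $\phi \in \mbsgp$ sending $\{0, \infty\}$ to $\{p_1, p_2\}$ and $\psi \in \mbsgp$ sending $\{q_1, q_2\}$ to $\{0, \infty\}$. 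Then $g := \psi \circ f \circ \phi$ is again a degree $2$ rational map whose critical points and critical values both lie in $\{0, \infty\}$.

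The main step is to classify such normalised $g$. Since $g$ has a critical point at $0$ mapping to a critical value in $\{0, \infty\}$, and the local degree there is $2$, the preimage of that critical value consists only of $0$ (with multiplicity $2$); similarly for $\infty$. This forces either $g(z) = cz^2$ or $g(z) = c/z^2$ for some $c \in \cstar$. In the first case, precomposing with a further dilation $z \mapsto \lambda z$ (which fixes $\{0,\infty\}$) absorbs the constant $c$, yielding $f_0(z) = z^2$. The second case reduces to the first by postcomposing with the Möbius inversion $w \mapsto 1/w$.

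I would then conclude by observing that this exhibits every element of $R_2$ in the $\mbsgp \times \mbsgp$-orbit of $f_0$, so the orbit map realises $R_2$ as a quotient of the complex Lie group $\mbsgp \times \mbsgp$ by the stabiliser of $f_0$, giving it the structure of a complex homogeneous manifold. The only potentially delicate point is the classification step for the normalised $g$; everything else is a straightforward application of multiple transitivity of $\mbsgp$.
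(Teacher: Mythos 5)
Your proof is correct, but it takes a different route from the paper's. You give the ``elementary'' argument: normalise the critical points and critical values to $\{0,\infty\}$ using transitivity of $\mbsgp$ on pairs, then classify the normalised maps directly by counting preimages (the local degree $2$ at a critical point forces the fibre over its critical value to be a single point, whence $g(z)=cz^2$ or $c/z^2$), and finally absorb the constant. This is essentially the approach the paper attributes to Guest et al.\ and explicitly declines to reproduce. The paper instead argues via the Riemann existence theorem: a degree-$2$ branched covering of $\pone$ is determined up to precomposition by its two critical values and its (unique) monodromy, and postcomposition moves any pair of critical values to any other, so there is one orbit. Your version is more self-contained and computational; the paper's version is chosen because it sets up the critical-value-and-monodromy framework that drives the entire degree-$3$ analysis in the rest of the paper. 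Two small points you gloss over: you should note why the two critical points are distinct (Riemann--Hurwitz gives $\sum(e_p-1)=2$ with each $e_p\le 2$, so there are exactly two simple critical points) and why the two critical values are distinct (if both critical points mapped to the same value, its fibre would have total multiplicity $4>2$); the latter is needed before you can choose $\psi$ sending $\{q_1,q_2\}$ to $\{0,\infty\}$, though it also falls out of your own preimage count.
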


At the end of Section~\ref{section:actions} we give
an alternative proof of this result, as an introduction
to the methods used later in this paper.
Complex homogeneous manifolds are always Oka
(see for example \cite[Proposition~5.5.1]{Forstneric-2011}),
so we have the following consequence:

\begin{maincorollary} \label{cor:r2oka}
$R_2$ is an Oka manifold.
\end{maincorollary}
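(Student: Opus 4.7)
The plan is immediate: apply Theorem~\ref{thm:r2oka} together with the general principle, already signalled in the preceding remark, that every complex homogeneous manifold is Oka.

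First I would record what Theorem~\ref{thm:r2oka} actually supplies: a transitive holomorphic action of the complex Lie group $\mbsgp \times \mbsgp$ on $R_2$. This presents $R_2$ as a quotient $G/H$, where $G$ is a complex Lie group and $H$ is the closed complex subgroup stabilising a chosen basepoint in $R_2$. This is the setting in which the homogeneity argument applies.

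Next I would invoke the general principle as it is proved, for example, in Forstneric's monograph. The standard construction produces a dominating holomorphic spray on $G/H$ from the Lie algebra $\mathfrak{g}$ of $G$: one picks vectors $v_1, \ldots, v_N \in \mathfrak{g}$ whose associated fundamental vector fields span the tangent space at the basepoint, and then defines $s(x,(t_1,\ldots,t_N)) = \exp(t_1 v_1)\cdots\exp(t_N v_N)\cdot x$. This $s$ is a holomorphic spray on $R_2$, and transitivity of the action propagates the spanning condition from the basepoint to every other point of $R_2$, so $s$ is dominating. Hence $R_2$ is elliptic in Gromov's sense, and ellipticity is one of the classical sufficient conditions for the Oka property.

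There is no real obstacle here. The corollary is essentially a one-line deduction from Theorem~\ref{thm:r2oka} combined with a cited general result, and the main ``work'' is simply the bookkeeping of identifying $R_2$ as a quotient of complex Lie groups. If one wished to be fully self-contained, the only thing left would be to write out the spray construction above in full; but since this is standard material, citing the Forstneric reference already present in the excerpt is sufficient.
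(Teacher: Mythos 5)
Your proposal is correct and follows exactly the paper's own route: the corollary is deduced from Theorem~\ref{thm:r2oka} together with the cited fact that every complex homogeneous manifold is Oka (\cite[Proposition~5.5.1]{Forstneric-2011}). The extra sketch of the spray construction is accurate but not needed beyond the citation.
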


For $d\geq3$ it is presently unknown whether $R_d$ is an Oka manifold.
The ideal situation would be to express $R_d$ as a holomorphic
fibre bundle whose base and fibre are Oka.
This would be sufficient to show that $R_d$ is Oka
(see for example \cite[Theorem~5.5.4]{Forstneric-2011}).
The quotient map of a group action would be a natural candidate for
such a bundle.

The nearest we can get at present
is to exhibit a group action whose \textit{categorical}
(rather than geometric) quotient is Oka,
and to prove the following two weaker properties for $R_3$.

\begin{maindefinition} \label{def:dominable}
Let $X$ be a complex manifold, $p\in X$
and $\phi\colon\cn\to X$ a holomorphic map with $\phi(0)=p$.
(The number $n$ is not necessarily equal to the dimension of $X$.)
We say that $\phi$ \emph{dominates $X$ at $p$}
if $d\phi_0$ is surjective.
If such a $\phi$ exists, then $X$ is
\emph{dominable at $p$},
and $\phi$ is a \emph{dominating map}.
If $X$ is dominable at every $p\in X$,
then $X$ is \emph{strongly dominable}.
\end{maindefinition}

\begin{maindefinition} \label{def:connected}
A manifold $X$ is \emph{strongly $\complex$-connected}
if every pair of points can be joined by an entire curve;
that is, for every pair of points of $X$
there is a holomorphic map $\complex\to X$
whose image contains both points.
\end{maindefinition}

\begin{mainremark}
Every Oka manifold is strongly $\complex$-connected:
this follows from the basic Oka property
described in \cite[page~16]{Forstneric-Larusson-2011}.
The definition of ``$\complex$-connected'' is not standardised:
Gromov in \cite[3.4(B)]{Gromov-1989} uses the term
to refer to strong $\complex$-connectedness as described here,
while other authors use it to refer to the weaker property
that every pair of points can be joined by
a  finite chain of entire curves.
\end{mainremark}

The three main results of this paper
are Theorems~\ref{thm:r3quotient} through \ref{thm:r3connected} below.

\begin{maintheorem} \label{thm:r3quotient}
The categorical quotient for the action of
$\mbsgp\times\mbsgp$ on $R_3$ by pre- and postcomposition
is $\complex$.
\end{maintheorem}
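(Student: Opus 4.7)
The plan is to construct an explicit $\mbsgp\times\mbsgp$-invariant regular function $\pi\colon R_3 \to \complex$, show that it generates the ring of $\mbsgp\times\mbsgp$-invariants on $R_3$, and then apply Snow's theorem \cite{Snow-1982} on categorical quotients for reductive actions to conclude that $\pi$ realises the categorical quotient. First I would present $R_3$ as the open subset of $\pseven$ where the resultant $R:=\mathrm{Res}(p,q)$ is non-zero, identifying each $f\in R_3$ with a pair of coprime binary cubics $(p,q)$ up to scale, and lift the action to an $SL_2\times SL_2$-action on $\complex^8$ in which one factor acts on the source variable of $p,q$ and the other by linear change of basis on the pair $(p,q)$.

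Next I would build the relevant invariants. The Jacobian $J := p'q - pq'$, whose zeros are the ramification points of $f$, transforms as a binary quartic under the source $SL_2$ and is invariant under the target $SL_2$ (the action multiplies $J$ by $\det = 1$). Combining the classical $SL_2$-invariants $I_2(J), I_3(J)$ of a binary quartic with the bi-$SL_2$-invariant resultant $R$ then yields weight-zero combinations such as $I_2^3/R^2$ and $I_3^2/R^2$ that descend to $\mbsgp\times\mbsgp$-invariant regular functions on $R_3$. A dimension count supports the claim that the invariant ring is one-dimensional: the four branch points of a generic $f$ lie in general position on $\pone$, so the $\mbsgp\times\mbsgp$-stabiliser of $f$ is discrete, the generic orbit is six-dimensional, and the transcendence degree of the invariant ring is $7 - 6 = 1$.

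The substantive step is to show that a single weight-zero invariant $\pi$ actually generates the whole invariant ring, rather than leaving several algebraically dependent generators. I would approach this either via classical invariant theory for pairs of binary cubics under $SL_2\times SL_2$, or via a direct normal-form argument: use the pre-composition action to place three ramification points at $0,1,\infty$, use post-composition to place three branch values at $0,1,\infty$, and analyse the resulting one-parameter slice (whose single parameter is then pulled back to give $\pi$). With $\pi$ identified, Snow's theorem gives the analytic categorical quotient of the Stein manifold $R_3$ as $\mathrm{Spec}$ of the invariant ring, namely $\complex[\pi]\cong\complex$; the map is verified surjective onto $\complex$ by exhibiting an explicit preimage for each value $c\in\complex$ within the normal form.

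The main obstacle will be this penultimate step: pinning down the single weight-zero generator and ruling out extra generators. Careful attention must also be paid to the degenerate locus where $J$ has a repeated root, corresponding to maps with a totally ramified critical point, to ensure that $\pi$ extends regularly across all of $R_3$ and surjects onto $\complex$ rather than onto a proper open subset.
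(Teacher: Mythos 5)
Your overall architecture --- an explicit invariant $\pi$, a transcendence-degree count showing the invariant field is one-dimensional, a normal-form slice to pin down the generator, and Snow's theorem to finish --- is sound, and your fallback ``normal-form argument'' (critical points to $0,1,\infty$ by precomposition, critical values to $0,1,\infty$ by postcomposition, then analyse the one-parameter slice) is precisely the route the paper takes (Definition~\ref{def:standard_form} and Lemma~\ref{lemma:standard_form}). But the step you yourself flag as the main obstacle is a genuine gap, and your preferred candidates for $\pi$ are at real risk of failing it. Any invariant built from the Jacobian quartic $J=p'q-pq'$ up to scale sees essentially only the configuration of the four critical points: on the locus of maps with a fixed critical set, $J$ is a fixed quartic times a scalar $c$, so $I_2(J)^3/R^2$ and $I_3(J)^2/R^2$ both reduce to functions of the single quantity $c^6/R^2$, and their ratio $I_2^3/I_3^2$ carries exactly the same information as the symmetrised cross-ratio $s$ of the critical points. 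The paper shows (Example~\ref{ex:a_values}: $\mu=5$, $a=-6\pm\sqrt{21}$) that $s$ does \emph{not} separate closed orbits --- there are distinct closed orbits whose critical points have the same cross-ratio but whose critical \emph{values} differ. So the burden falls entirely on the normalisation by the resultant $R$, and nothing in your proposal checks that it does the necessary work. On the slice $f_a$ every invariant is a rational function of the quantity $\pi(a)$ of \eqref{eq:pi}; you would need to verify that your chosen invariant is a degree-one (M\"obius) function of it, i.e.\ exactly $6$-to-$1$ in $a$, matching the six-element symmetry of Lemma~\ref{lemma:a_transform}. The paper performs the corresponding verification via the explicit factorisation of $\pi(f_2)-\pi(f_1)$ in Lemma~\ref{lemma:same_pi}.

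Two further points are missing. First, you never determine which orbits are closed: the categorical quotient separates only closed orbits, so one must know that the null fibre (where $J$ has a repeated root) consists of one non-closed orbit together with one closed orbit lying in its closure, and that all orbits in the open stratum are closed (Proposition~\ref{prop:orbits}, proved using Snow's result that the closure of a non-closed orbit contains an orbit of strictly smaller dimension). Without this you cannot interpret the fibres of $\pi$, nor rule out that the quotient is $\cstar$ with something else attached rather than $\complex$. Second, identifying Snow's analytic categorical quotient with $\mathrm{Spec}$ of an algebraic invariant ring, and upgrading a holomorphic bijection onto $\complex$ to an isomorphism, requires an argument: the paper uses that Snow's quotient $Y$ is normal, irreducible and (being in holomorphic bijection with $\complex$) one-dimensional, hence smooth, so the induced bijection $Y\to\complex$ is biholomorphic (Remark~\ref{remark:r3quotient}).
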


In fact we can give an explicit formula for the quotient map,
as well as a detailed description of the orbits of the group action.

\begin{proof}[Overview of proof]
The quotient map $\pi\colon R_3\to\complex$
is constructed in Section~\ref{section:quotient}.
Given $f\in R_3$, the value of $\pi(f)$
is expressed as a rational function of
cross-ratios of critical points and critical values
of $f$.
Section~\ref{subs:s2} introduces the role of symmetric polynomials
in describing this function;
Section~\ref{subs:standard} describes a standard form
for elements of $R_3$ as an aid to computation,
and Section~\ref{subs:cross_ratio} gives an explicit description of $\pi$
on an open subset of $R_3$.
The extension of $\pi$ to the rest of $R_3$ is given in Section~\ref{subs:quotient}.

To show that $\pi$ is the desired quotient map,
we need to describe the orbits of the group action
and determine which orbits are closed.
This is the content of Section~\ref{subs:orbits}.
Then Lemma~\ref{lemma:same_pi} tells us that $\pi$ distinguishes the closed orbits,
Lemma~\ref{lemma:image_of_pi} and Section~\ref{subs:quotient}
tell us that that the image of $\pi$
is all of $\complex$, and
Corollary~\ref{cor:pi_holo} tells us that $\pi$ is holomorphic.
Remark~\ref{remark:r3quotient} explains why these properties
imply that $\pi$ is the quotient map.
\end{proof}

We can say a little more about the group action:
Section~\ref{subs:stabilisers}
describes the stabilisers of the orbits.
Also, it is interesting to notice
that all the constructions of Section~\ref{section:quotient}
can be carried out within the algebraic category,
whereas the proof of the next two theorems
involves exponential maps.

\begin{maintheorem} \label{thm:r3dominable}
The space $R_3$ of rational maps of degree~3
is strongly dominable.
\end{maintheorem}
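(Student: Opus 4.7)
My plan is to produce, at each $p \in R_3$, a holomorphic map $\phi \colon \complex^n \to R_3$ with $\phi(0) = p$ and $d\phi_0$ surjective, by combining the exponential of the Lie algebra action of $G = \mbsgp \times \mbsgp$ with an auxiliary transverse holomorphic family through $p$. The orbit directions come from $\mathfrak{g} = \Liesl_2(\complex) \oplus \Liesl_2(\complex) \cong \complex^6$: composing the matrix exponential $\exp \colon \mathfrak{g} \to G$ with the orbit map $g \mapsto g \cdot p$ gives a holomorphic map $\Psi_p \colon \complex^6 \to R_3$ whose image is $G \cdot p$ and whose differential at $0$ surjects onto $T_p(G \cdot p)$.

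Since $\dim R_3 = 7$ and generically $\dim G \cdot p = 6$, I would supplement $\Psi_p$ with a holomorphic map $\gamma \colon \complex^k \to R_3$, where $k = 7 - \dim G \cdot p$, satisfying $\gamma(0) = p$ and whose differential at $0$ covers a complement of $T_p(G \cdot p)$ in $T_p R_3$. The combined map $\phi(X, Y, s) = \exp(X) \cdot \gamma(s) \cdot \exp(Y)^{-1}$ would then dominate $R_3$ at $p$. For non-generic $p$ with smaller orbit, additional transverse directions are needed; I would handle those case by case, using the orbit and stabiliser data from Sections~\ref{subs:orbits}--\ref{subs:stabilisers} to build enough explicit one-parameter flows in each stratum.

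The main obstacle is constructing the transverse family $\gamma$ globally on $\complex^k$. A naive polynomial perturbation such as $(P + tR)/(Q + tS)$ with $p = P/Q$ typically leaves $R_3$ at the isolated $t$-values where numerator and denominator acquire a common factor, giving only a map $\complex \setminus F \to R_3$ for some finite set $F$. The remark in the excerpt that the proof uses exponential maps suggests the fix: substitute an exponential reparametrization (for example $t \mapsto e^t - 1$) to bypass a single bad value, and compose several such reparametrizations, possibly combined with the $G$-action, to bypass the rest. Alternatively, since the quotient $\pi \colon R_3 \to \complex$ from Theorem~\ref{thm:r3quotient} is surjective with $G$-orbits contained in its fibres, a holomorphic section $\sigma \colon \complex \to R_3$ of $\pi$ would supply a curve transverse to every generic orbit, and translating by $G$ would yield transverse curves through every generic $p$. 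The hardest part will be producing these transverse families globally on $\complex^k$ while maintaining coprimality of numerator and denominator; this is where the exponentials do essential work, turning partially defined rational families into entire holomorphic ones by wrapping around the resultant hypersurface.
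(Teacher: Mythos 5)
Your overall architecture --- orbit directions from $\exp\colon\Liesl_2(\complex)\oplus\Liesl_2(\complex)\to G$ plus a transverse holomorphic family through $p$ --- matches the paper's, but the proposal stops exactly where the real work begins, and the two devices you would need are not supplied. First, you never construct the transverse family $\gamma$: you correctly observe that a naive rational perturbation leaves $R_3$ where numerator and denominator acquire a common factor, but your proposed remedy (reparametrize by $t\mapsto e^t-1$ to dodge one bad value and ``compose several such reparametrizations'') is not a workable plan, because the bad set is not a finite list of points in a one-dimensional parameter space that you can enumerate and dodge one at a time. The paper's resolution is structurally different: it embeds $R_3$ equivariantly into $\pseven$ as the complement of the resultant hypersurface, takes the explicit two-parameter family $\eta_0(a,b)=(x^3-ax)/(-bx^2+1)$ viewed as a map $\complex^2\to\pseven$, and builds $\phi(s,t)=\eta(s)^{\exp(t)}\colon\complex^8\to\pseven$. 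Domination of $\pseven$ at $p$ is then a pure linear-algebra computation (Proposition~\ref{prop:transverse}) with no coprimality constraint at all, and the coprimality problem reappears only in the tame form $\phi^{-1}(R_3)\cong(\complex^2\setminus\{ab=1\})\times\complex^6$. Dominability of the conic complement at the origin is handled by an explicit Buzzard--Lu-type exponential map (Proposition~\ref{prop:chi_surjective}), and a composition lemma (Proposition~\ref{prop:composition}) finishes. This two-step factorisation --- dominate the ambient projective space first, then dominate the preimage of $R_3$ --- is the idea your proposal is missing; the exponentials ``do essential work'' not by wrapping a curve around the resultant hypersurface but by surjecting $\complex^2$ onto the complement of an affine conic.

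Second, your plan defers the non-generic orbits to an unspecified case-by-case analysis, needing $k=7-\dim G\cdot p$ transverse directions at points of the $5$-dimensional orbit, and you do not construct these extra directions. The paper avoids any case division: Proposition~\ref{prop:all_orbits} shows that the single family $\eta_0$ meets every orbit (via a balancing-of-critical-points and odd-function argument), and the rank computation in Proposition~\ref{prop:transverse} shows that the $2$-dimensional image of $d\eta_0$ together with the orbit directions spans all of $T\pseven_f$ for every $(a,b)$ with $ab\neq1$, including the points lying over the small orbit. Your alternative idea of a holomorphic section of $\pi\colon R_3\to\complex$ would in any case supply only a one-dimensional transverse family and could not cover the null fibre, where $\pi$ is constant. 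As written, the proposal is a correct outline of the difficulty rather than a proof.
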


\begin{proof}[Overview of proof]
Proposition~\ref{prop:composition}
describes a method of constructing dominating maps,
and the rest of Section~\ref{subs:composition}
gives explicit maps from $\complex^8$ to $R_3$.
The building blocks of this construction
are a map $\eta_0$ from a subset of $\complex^2$ to $R_3$
that is transverse to the orbits of the group action,
a dominating map from $\complex^6$ to the group,
and an embedding of $R_3$ into $\pseven$.
We also need the fact that the domain of $\eta_0$
is dominable: this is Proposition~\ref{prop:chi_surjective}.

Section~\ref{subs:surjective} shows that the image of
$\eta_0$ intersects every orbit of the group action.
This fact allows us to use translates of $\eta_0$
to obtain dominating maps at each point of $R_3$.

Section~\ref{subs:transverse}
contains the proof that $\eta_0$ is transverse to the orbits.
\end{proof}

\begin{maintheorem} \label{thm:r3connected}
The space $R_3$
is strongly $\complex$-connected.
\end{maintheorem}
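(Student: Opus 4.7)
The plan is to combine the orbit-meeting map $\eta_0$ from the proof of Theorem~\ref{thm:r3dominable} with a family of group elements to produce a single entire curve through any two prescribed points of $R_3$. The two ingredients we need are already in hand by the time this theorem is reached: first, by Proposition~\ref{prop:chi_surjective} and the orbit-meeting property established in Section~\ref{subs:surjective}, the composition $\eta := \eta_0 \circ \chi \colon \complex^2 \to R_3$ is a holomorphic map whose image meets every $(\mbsgp\times\mbsgp)$-orbit; and second, the two-sided symmetry group $G := \mbsgp \times \mbsgp$ is a connected complex Lie group, hence Oka (see for example \cite[Proposition~5.6.1]{Forstneric-2011}), and in particular strongly $\complex$-connected with the interpolation property at $\{0,1\} \subset \complex$.

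Given arbitrary $f, g \in R_3$, choose $u_f, u_g \in \complex^2$ and $H_f, H_g \in G$ with $H_f \cdot \eta(u_f) = f$ and $H_g \cdot \eta(u_g) = g$. Let $\ell \colon \complex \to \complex^2$ be the affine map sending $0 \mapsto u_f$ and $1 \mapsto u_g$, and set $\mu := \eta \circ \ell$, so that $\mu(0) = \eta(u_f)$ and $\mu(1) = \eta(u_g)$. Choose a holomorphic $\gamma \colon \complex \to G$ with $\gamma(0) = H_f$ and $\gamma(1) = H_g$. Since $G$ acts holomorphically on $R_3$, the formula
\[\psi(t) := \gamma(t) \cdot \mu(t)\]
defines a holomorphic map $\psi \colon \complex \to R_3$ with $\psi(0) = f$ and $\psi(1) = g$, establishing strong $\complex$-connectedness.

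I expect the main obstacle to be cosmetic rather than conceptual: verifying that $\chi$ may indeed be taken with image equal to the whole domain of $\eta_0$ (so that every orbit is met by the composite $\eta$, not merely by $\eta_0$ itself). The advantage of this framework is that it sidesteps the naive strategy of concatenating three separate entire curves---one in the orbit of $f$, one crossing orbits via $\eta$, and one in the orbit of $g$---which would run into exactly the same difficulty as proving $R_3$ Oka. Instead the orbit-crossing curve $\mu$ and the group curve $\gamma$ are woven together by the action itself into a single entire curve, making the theorem essentially a corollary of the machinery developed for Theorem~\ref{thm:r3dominable}.
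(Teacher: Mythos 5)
Your argument is correct and is essentially the paper's own proof: the paper establishes that $(u,v)\mapsto \exp(v)\cdot(\eta_0\circ\chi)(u)$ is a surjective holomorphic map $\complex^8\to R_3$ and then composes it with a line joining two preimages, and your curve $t\mapsto\gamma(t)\cdot\eta(\ell(t))$ is exactly that composition written out with the $\complex^2$ and group factors interpolated separately. The point you flag as a possible obstacle is indeed harmless: $\chi$ surjects onto $\complex^2\setminus\{ab=1\}$, which is precisely $\eta_0^{-1}(R_3)$, so $\eta_0\circ\chi$ meets every orbit by Proposition~\ref{prop:all_orbits}.
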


\begin{proof}
The dominating maps of the previous theorem
are in fact surjective: this follows from Proposition~\ref{prop:all_orbits}.
Thus there exists a surjective map $\complex^8\to R_3$.
Given $f$ and $g$ in $R_3$,
we can choose any preimages of $f$ and $g$,
join the preimages by an entire curve in $\complex^8$,
and compose the entire curve with the surjective map
to obtain an entire curve in $R_3$ joining $f$ and $g$.
\end{proof}

I thank Finnur Lárusson for many helpful discussions
during the preparation of this paper.

\section{Context: rational maps and the parametric Oka~property} \label{section:context}

In Section~\ref{subs:douady} we describe the
complex structure for the spaces $R_d$ of rational maps of degree~$d$.
It is most convenient to use the coefficients of rational maps
as coordinates.
In fact the complex structure thus obtained has an important
universal property,
telling us that it is the right complex structure for our purposes.

In Section~\ref{subs:oka}
we introduce some relevant concepts from Oka theory.
These serve as motivation for the questions addressed in this paper;
however, the definition of an Oka manifold is not directly used.
Consequently, only a brief sketch is given here.
The interested reader can refer to
the survey paper \cite{Forstneric-Larusson-2011}
of Forstnerič and Lárusson for definitions and examples,
or to the book \cite{Forstneric-2011} of Forstnerič
for a more detailed exposition.

\subsection{The space of rational maps} \label{subs:douady}

For each $d=0,1,2\ldots$,
we can embed $R_d$ (as a set) into $\projspace{2d+1}$
by sending a rational function
$$\frac{a_dz^d+a_{d-1}z^{d-1}+\cdots+a_0}{b_dz^d+b_{d-1}z^{d-1}+\cdots+b_0}$$
to the point with homogeneous coordinates
$$(a_d \colon\! a_{d-1}\cdots \colon\! a_0 \colon\! b_d
  \colon\! b_{d-1} \colon\! \cdots \colon\! b_0).$$
We introduce a complex structure on $R_d$ as the pullback of the
complex structure on $\projspace{2d+1}$.
The image of $R_d$ under this embedding is an open subset of $\projspace{2d+1}$.
Specifically, the condition for a rational function $p/q$ to belong to $R_d$,
where $p$ and $q$ are polynomials of maximum degree~$d$,
is that $p$ and $q$ should have no common factors.
This is equivalent to the non-vanishing of the resultant of $p$ and $q$,
and so the image of $R_d$ is the complement of the
resultant locus in $\projspace{2d+1}$.

The topology induced on $R_d$ by this complex structure
coincides with the compact-open topology
on $\scriptO(\pone,\pone)=\bigcup\limits_{d=0}^\infty R_d$.
In this topology, each $R_d$ is connected,
and the map $\scriptO(\pone,\pone)\to\integers$
sending a rational function to its degree is continuous.
Therefore the connected components of $\scriptO(\pone,\pone)$
are precisely the $R_d$.

\begin{proposition}
With the complex structure described above,
the space $\scriptO(\pone,\pone)$
is an internal hom-object in the category
of reduced complex spaces and holomorphic maps.
\end{proposition}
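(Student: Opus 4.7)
The plan is to verify the universal property of the internal hom: for every reduced complex space $S$, the natural assignment
$$\hat{f} \;\longleftrightarrow\; \bigl((s,z) \mapsto \hat{f}(s)(z)\bigr)$$
should give a bijection between holomorphic maps $S \to \scriptO(\pone,\pone)$ and holomorphic maps $S \times \pone \to \pone$. The underlying set-theoretic bijection is clear; the content is matching ``holomorphic'' on both sides.

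For the easy direction, assume $\hat{f}\colon S \to \scriptO(\pone,\pone)$ is holomorphic. Since the target is the disjoint union of the $R_d$ and each $R_d$ inherits its structure from $\projspace{2d+1}$, we may restrict attention to one component and work locally on $S$. On a small open subset we lift the projective coefficients of $\hat f$ through a standard affine chart of $\projspace{2d+1}$, obtaining holomorphic functions $a_0(s),\dots,b_d(s)$ whose polynomials $p_s, q_s$ have no common zero (that is exactly the defining condition of $R_d$). Then $(s,z)\mapsto p_s(z)/q_s(z)$ is the required holomorphic map $S\times\pone \to \pone$.

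For the converse, let $f\colon S\times\pone\to\pone$ be holomorphic and set $\hat{f}(s) = f(s,\cdot)$. I would first observe that $\deg f_s$ is locally constant in $s$: the topological degree is a homotopy invariant in continuous families, and for holomorphic self-maps of $\pone$ it coincides with the algebraic degree. Hence, near any fixed $s_0 \in S$, $\hat f$ takes values in a single $R_d$. Next, choose $2d+2$ points $z_0,\dots,z_{2d+1}\in\pone$ avoiding the poles of $f_{s_0}$; by continuity they avoid the poles of $f_s$ for all $s$ in some neighbourhood $U$ of $s_0$. The values $v_i(s) := f(s, z_i)$ are then holomorphic on $U$, and the homogeneous system $p_s(z_i) - v_i(s)\,q_s(z_i) = 0$ in the $2d+2$ unknown coefficients of $p_s, q_s$ has a one-dimensional solution space---precisely the line in $\complex^{2d+2}$ representing $\hat{f}(s) \in \projspace{2d+1}$. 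Standard linear algebra (the kernel of a holomorphically varying matrix of constant corank one) shows this line depends holomorphically on $s$, so $\hat{f}|_U$ is holomorphic into $\projspace{2d+1}$, and hence into $R_d$.

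The main obstacle is this converse direction, which requires controlling the pole locus of $f_s$ in families and recovering coefficients from point evaluations in a uniform holomorphic way. The local constancy of the degree is essential and rests on the fact that in a holomorphic family a pole and a zero cannot collide at a single parameter value without breaking holomorphy of $f$ on $S \times \pone$. Once that is established, the remainder reduces to a straightforward interpolation argument phrased as linear algebra over $\scriptO_S$. As a shortcut, one could instead invoke Douady's general representability theorem for Hom-functors between compact complex spaces and then identify the resulting universal structure with the coefficient structure of Section~\ref{subs:douady}; I would prefer the direct route above, which avoids that heavier machinery.
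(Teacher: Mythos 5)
Your proposal is correct and takes essentially the same route as the paper: both verify the universal property directly via the coefficient embedding of $R_d$ into $\projspace{2d+1}$, using local constancy of the degree of $f_s$ in a continuous family. Your interpolation and constant-corank linear algebra argument simply supplies the details of the step the paper dismisses as ``it is easy to see that $\tilde\phi$ is holomorphic''.
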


\begin{proof}
We wish to show that $\scriptO(\pone,\pone)$
is a representing object for the functor
$\scriptO(- \times \pone, \pone)$.

Given a reduced complex space $T$
and a holomorphic map $\phi \colon T\times\pone\to\pone$,
define $\tilde\phi \colon T\to\scriptO(\pone,\pone)$
by $\tilde\phi(t)(x)=\phi(t,x)$.
For each $t$,
the map $\tilde\phi(t)$
is a member of some $R_d$;
by continuity of the degree map,
the degree~$d$ must be constant on each connected component of $T$.
Thus we can use the embedding $R_d\to\projspace{2d+1}$
to write $\tilde\phi$ in local coordinates.
From this, it is easy to see that $\tilde\phi$ is holomorphic.

Let $\eta_T \colon \scriptO(T\times\pone,\pone)\to\scriptO(T,\scriptO(\pone,\pone))$
be the map sending $\phi$ to $\tilde\phi$.
Then $\eta$ is natural transformation
from the functor
$\scriptO(- \times X, Y)$ to the functor $\scriptO(-,\scriptO(X,Y))$.

If $\psi \colon T\to\scriptO(\pone,\pone)$ is holomorphic,
then $\psi=\tilde\phi$ where $\phi(t,x)=\psi(t)(x)$,
and it is immediate that $\phi$ is holomorphic.
Therefore $\eta_T$ is a bijection,
and $\eta$ is a natural isomorphism.
\end{proof}

This result is implicit in the work
of Kaup~\cite{WKaup-1969}, but is not stated explicitly there.
See also Douady~\cite[Section~10.2]{Douady-1966}
regarding universal properties of mapping spaces.

\subsection{A brief outline of Oka theory} \label{subs:oka}

Informally speaking,
Oka manifolds can be viewed as the opposite
of Kobayashi hyperbolic manifolds.
Hyperbolicity is a type of holomorphic rigidity property;
conversely, Oka manifolds enjoy a variety of holomorphic flexibility properties.

The most concrete expression of flexibility for a manifold $X$
is that there should be ``many'' holomorphic maps $\complex\to X$.
This is formalised in Gromov's notion of ellipticity,
introduced in~\cite{Gromov-1989}.
Dominability and $\complex$-connectedness (Definitions~\ref{def:dominable}
and~\ref{def:connected}) express weaker versions of the same idea.
There is a chain of implications
$$\text{elliptic}\Rightarrow\text{Oka}\Rightarrow\text{strongly dominable
and strongly }\complex\text{-connected};$$
at present, it is unknown whether the reverse implications hold
in general.
(Campana and Winkelmann~\cite[Example~8.3]{Campana-Winkelmann-2012}
give an example of a manifold that is $\complex$-connected but not Oka.
However, there are no known examples of manifolds
that are strongly dominable but not Oka.)

Oka manifolds also enjoy a number of homotopy properties.
The simplest is the so-called basic Oka property (BOP):
every continuous map from a Stein manifold to an Oka manifold
is homotopic to a holomorphic map.
Oka manifolds satisfy a stronger version of the BOP
with added approximation and interpolation conditions
(see \cite[page~16]{Forstneric-Larusson-2011} for details).

Forstnerič and Lárusson have identified a number of equivalent
properties which characterise Oka manifolds.
The following is of particular interest
in the context of mapping spaces.

\begin{definition}[Parametric Oka property (POP), simple version]
A manifold $X$ satisfies the \emph{parametric Oka property}
if for every Stein manifold $S$ and every compact subset
$P\subset\reals^m$,
every continuous map $f \colon P\times S\to X$
is homotopic to a map
$f_1 \colon P\times S\to X$
such that $f_1(\cdot,x) \colon S\to X$
is holomorphic for every $x\in P$.
\end{definition}

In other words, a family of continuous maps
can be deformed to a family of holomorphic maps
with continuous dependence on the parameter.

This is apparently a stronger condition than the BOP.
However, it turns out that the POP with approximation and interpolation
is equivalent to the BOP with approximation and interpolation;
either condition can be taken as the definition of an Oka manifold
(see \cite[Section~1]{Forstneric-2009}).

It is natural to ask whether continuous dependence on a parameter
can be replaced by holomorphic dependence.
To put it another way, if $P$ is a compact complex manifold,
then is the mapping space $\scriptO(P,X)$
an Oka manifold or similar?
(The results of \cite{Douady-1966}
guarantee that if $P$ is compact,
then $\scriptO(P,X)$ carries a universal complex structure.)
In this paper we begin with the simplest interesting case,
that of $P=X=\pone$.

\section{Group actions and the degree 2 case} \label{section:actions}

The action of the Möbius group $\mbsgp$
on the Riemann sphere $\rsphere$ is sharply 3-transitive:
given any two triples of distinct points of $\rsphere$,
there is a unique Möbius transformation taking the first triple to the second.
Because of this transitivity,
the Möbius group is a valuable tool
for simplifying the study of $R_d$ when $d$ is small.
Specifically, rational maps can be composed with Möbius transformations,
giving rise to a number of interesting group actions on $R_d$.
Since $\mbsgp$ is reductive
(being the complexification of the compact subgroup $\PSU_2$),
we can study its actions using
geometric invariant theory as
described in~\cite{Snow-1982}
(which gives analytic analogues of the
results of~\cite{Luna-1973}).

First there are the pre- and postcomposition actions.
Precomposition is the action $R_d\times\mbsgp\to R_d$
defined by
$f\cdot g=f\circ g$ for $f\in R_d$ and $g\in\mbsgp$.
Postcomposition is the action $\mbsgp\times R_d\to R_d$
defined by $g\cdot f=g\circ f$.
An interesting asymmetry appears here.
It is easy to verify that postcomposition is a free action.
Therefore there is a well defined geometric quotient:
the set of orbits has the structure of a complex manifold,
and the quotient map is a fibre bundle~\cite[Corollary~5.5]{Snow-1982}.
This quotient space is a useful tool
in studying the topology of $R_d$:
see for example \cite{Havlicek-1995} and \cite{Ono-Yamaguchi-2003}.
On the other hand,
the precomposition action is not free.
(For example, consider $f(x)=x^d$ and let $g$ be
multiplication by a $d$th root of unity.)
This group action has received much less attention in the literature.

The two group actions can be combined to give the conjugation action:
$f\cdot g=g^{-1}\circ f\circ g$.
This action is of interest in the study of holomorphic dynamics:
see for example \cite[Section~3]{Milnor-1993}.
In particular, the quotient space of $R_2$ under this action
is $\complex^2$
(see also \cite[Section~5]{Silverman-1998}).
For $d>2$, the quotient of $R_d$
is a rational variety~\cite[Section~4]{Levy-2011}.
However, the behaviour of holomorphic flexibility properties
under birational maps is not well understood,
so it is not obvious how to apply this group action
to our present investigations.

The actions can also be considered jointly via the two-sided action
of $G=\mbsgp\times\mbsgp$ on $R_d$:
if $g=(g_1,g_2)\in G$ and $f\in R_d$,
then define $f^g$ by
$$f^g=g_1^{-1}\circ f\circ g_2.$$

The two-sided action, like the precomposition action,
is not free.
Furthermore, for $d\geq3$
there exist non-closed orbits (see Section~\ref{subs:orbits} below),
so the orbit space is not Hausdorff;
the geometric quotient does not exist as a manifold.
However, Snow's main theorem
tells us that the categorical quotient for this action
exists as a reduced complex space.
Since $R_3$ is $7$-dimensional and $G$ is $6$-dimensional,
we expect to find a $1$-dimensional quotient space.
Study of this quotient reveals a great deal about the structure of $R_3$.
This will be explored further
in Section~\ref{section:quotient} below.

For the case $d=2$, the action is transitive.
This can be proved by elementary means,
as in \cite{Guest-et-al-1995}.
An alternative and more intuitive proof
can be obtained by considering rational maps in terms of critical values.

A rational map of degree $d$ can be viewed as a branched
$d$-sheeted covering map $\rsphere\to\rsphere$.
By the Riemann--Hurwitz formula, there are $2d-2$ critical values
when counted with multiplicity.
Each critical value has multiplicity at most $d-1$,
so there must be at least two distinct critical values.

The Riemann existence theorem
(see for example~\cite[page~49]{Donaldson-2011})
plays a key role in understanding the orbits.
We need only the following special case.

\begin{maintheorem}[Riemann existence theorem, special case]\label{thm:RET}
Let $\Delta$ be a finite subset of $\pone$,
and $\phi \colon \pi_1(\pone\setminus\Delta)\to\Sym{d}$
a group homomorphism
(where $\Sym{d}$ denotes the symmetric group on $d$ symbols).
Suppose the image of $\phi$ is transitive.
Then there exists a compact connected Riemann surface $X$
and a $d$-fold branched holomorphic covering map
$f \colon X\to\pone$
with critical values $\Delta$ and monodromy given by $\phi$.
If $f_1 \colon X_1\to\pone$ and $f_2 \colon X_2\to\pone$ are two such coverings,
then there exists a biholomorphic map $g \colon X_1\to X_2$
such that $g\circ f_1=f_2$.
\end{maintheorem}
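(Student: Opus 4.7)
The plan is to build $X$ first as a topological branched cover of $\pone$ and then put a complex structure on it, and finally to derive uniqueness from the uniqueness of unramified coverings.

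First I would construct an unramified topological covering $Y\to\pone\setminus\Delta$ from the data $\phi$. Pick a basepoint $x_0\in\pone\setminus\Delta$, let $H\leq\pi_1(\pone\setminus\Delta,x_0)$ be the stabiliser of the symbol $1$ under $\phi$, and form the corresponding connected covering $Y\to\pone\setminus\Delta$ of degree $d$; transitivity of $\phi$ is precisely what ensures $Y$ is connected and that the monodromy representation is $\phi$ up to conjugation. Since $\pone\setminus\Delta$ is a Riemann surface and $Y$ is a covering, $Y$ inherits a canonical complex structure making the covering holomorphic.

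Next I would compactify $Y$ to obtain $X$ by filling in points above each $p\in\Delta$. Pick a small disk $D\subset\pone$ around $p$ meeting no other point of $\Delta$; the preimage of $D\setminus\{p\}$ in $Y$ splits as a disjoint union of connected unramified coverings of the punctured disk $D\setminus\{p\}$, each of which is biholomorphic to another punctured disk via a map of the form $w\mapsto w^k$, where the integers $k$ are determined by the cycle lengths of $\phi(\gamma_p)$ for $\gamma_p$ a small loop around $p$. For each such component I would add a single point and declare the local chart at the new point to be $w$ on the filled-in disk, with the covering map given by $w\mapsto w^k+p$ in these coordinates. This produces a Riemann surface $X$ and a proper holomorphic map $f\colon X\to\pone$ whose critical values are exactly $\Delta$ and whose monodromy is $\phi$. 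Compactness of $X$ follows from compactness of $\pone$ together with properness, and connectedness of $X$ is inherited from $Y$.

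For uniqueness, given two such coverings $f_i\colon X_i\to\pone$, their restrictions over $\pone\setminus\Delta$ are connected unramified coverings with the same monodromy representation $\phi$, hence by the standard classification of covering spaces there is a homeomorphism $g_0\colon f_1^{-1}(\pone\setminus\Delta)\to f_2^{-1}(\pone\setminus\Delta)$ intertwining the two covering maps. Since both restricted coverings are local biholomorphisms onto the same base, $g_0$ is automatically biholomorphic. To extend $g_0$ across the branch points, note that near any point $q\in f_1^{-1}(\Delta)$ the local model is $w\mapsto w^k$, and $g_0$ matches up punctured disks above the same point of $\Delta$, so removable singularities and the local normal form allow $g_0$ to be extended holomorphically and bijectively across the branch points, yielding the required $g\colon X_1\to X_2$.

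The main obstacle is the local analysis at the branch points, which appears twice: once in verifying that filling in punctures gives a genuine holomorphic structure with the prescribed ramification indices matching the cycle lengths of $\phi$, and once in showing that the biholomorphism obtained over $\pone\setminus\Delta$ extends across those points. Both rely on the classification of coverings of the punctured disk by $w\mapsto w^k$, but this is classical and the rest of the argument is formal.
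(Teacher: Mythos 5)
The paper does not prove this statement: it is quoted as a classical result with a citation to Donaldson's book, and the surrounding text uses it only as a black box. Your sketch is the standard proof of the Riemann existence theorem in this form — covering space theory over $\pone\setminus\Delta$ with the covering determined by the stabiliser subgroup, the pulled-back complex structure, compactification via the local model $w\mapsto w^k$ with $k$ the cycle lengths of the local monodromy, and uniqueness from the classification of coverings together with removable singularities — and it is correct as an outline, agreeing with the argument in the cited reference.
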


The genus of the surface $X$ is given by the Riemann--Hurwitz formula;
the multiplicities of the critical values
can be calculated from the cycle structure of the permutations
as described in Chapter~1 of \cite{Lando-Zwonkin-2004}.

If the multiplicities sum to $2d-2$,
then the resulting covering map is a rational function $\pone\to\pone$,
and the orbit of this rational function under the precomposition
action described above
is uniquely determined by the critical values and monodromy.

In the case of $d=2$, there are exactly two distinct critical values.
For a two-sheeted covering,
there is only one possible monodromy permutation.
Triple transitivity of the group implies
that pairs of critical values are all equivalent under the postcomposition action.
It follows that the two-sided action on $R_2$
has only one orbit.
This proves Theorem~\ref{thm:r2oka}.

\section{Degree 3: the categorical quotient} \label{section:quotient}

In this section we study the two-sided group action
of $G=\mbsgp\times\mbsgp$ on $R_3$
from the point of view of geometric invariant theory.
The group is reductive,
and $R_3$, being the complement of a hypersurface in $\pseven$,
is Stein.
Therefore the main theorem of Snow's paper \cite{Snow-1982}
tells us that the categorical quotient exists
and is a reduced Stein space.
We will show that the quotient is in fact $\complex$,
proving Theorem~\ref{thm:r3quotient} above.
The quotient map is explicitly described
in Section~\ref{subs:cross_ratio},
and the proof is completed in Section~\ref{subs:quotient}.

A rational function of degree~3 has four critical points
and four critical values, counted with multiplicity.
The multiplicity of each critical value is either one or two.
Therefore there are only three possible cases
(all of which occur):
\begin{itemize}
  \item[-] four distinct simple critical values;
  \item[-] one double and two simple critical values;
  \item[-] two double critical values.
\end{itemize}
The set of rational functions with four distinct critical values
is an open subset of $R_3$,
and will be referred to as the \emph{open stratum},
denoted $R_3^O$.
The open stratum is the set of $f\in R_3$
such that the zeros of $f'$ are distinct;
in other words, in local coordinates
it is the complement of the zero locus
of the discriminant of the numerator of $f'$.
Therefore it is an open subset of $R_3$,
in both the compact-open topology
and the Zariski topology.

The complement of $R_3^O$ will be called the
\emph{null fibre},
for reasons that will become clear later.
Most of this section will be concerned with understanding the
orbits in the open stratum.
For each of the other two cases there is a single orbit;
this is proved in Section~\ref{subs:orbits} below.

\begin{mainremark} \label{remark:dimensions}
Since $R_3$ is $7$-dimensional and $G$ is $6$-dimensional,
it is reasonable to expect that the generic orbit will have codimension~1.
In fact, if $g=(g_1,g_2)\in G$ fixes $f\in R_3$,
then $g_1$ must permute the critical values of $f$,
and $g_2$ permutes the critical points.
Since an element of the Möbius group is uniquely determined
by the image of three points,
it follows that if $f$ has at least three critical values
(and therefore at least three critical points),
then the stabiliser of $f$ is finite,
so the orbit is $6$-dimensional.

In the case where $f$ has only two critical values,
the stabiliser can contain a one-parameter subgroup.
For example, $f=x^3$
is fixed by the group element $(a^3x,ax)$
for all $a\in\cstar=\complex\setminus\{0\}$.
Hence the orbit of such $f$ is at most $5$-dimensional.
\end{mainremark}

More details about the stabilisers are given
in Section~\ref{subs:stabilisers}.

\begin{mainremark} \label{remark:null_codim}
The null fibre, being locally
the zero locus of a discriminant polynomial,
is a proper analytic subvariety of~$R_3$.
Since it contains at least one $6$-dimensional orbit
(the non-closed orbit of Proposition~\ref{prop:orbits}),
it has codimension~1.
\end{mainremark}

\subsection{Cross-ratio and symmetrised cross-ratio} \label{subs:s2}

Given four distinct points (a \emph{quartet})  $z_1,z_2,z_3,z_4\in\complex$,
their cross-ratio is the number
$$(z_1,z_2;z_3,z_4)=\frac{(z_1-z_3)(z_2-z_4)}{(z_2-z_3)(z_1-z_4)}.$$
The definition is extended to quartets in $\pone$
by adopting the convention that $\infty/\infty=1$.
For example,
$(0,\infty;1,\lambda)=(-1\cdot\infty)/(\infty\cdot(-\lambda))=\lambda$.
For quartets of distinct points, the cross-ratio can take on any value except
0, 1 or~$\infty$.
The Möbius group $\mbsgp$ preserves cross-ratio.
Therefore we aim to use the cross-ratio to construct invariant functions
for the action of $G=\mbsgp\times\mbsgp$ on $R_3$.
In particular, we are interested in
the cross-ratios of the critical points
and of the critical values of an element of the open stratum.

There is a technical issue that needs to be addressed:
there is no canonical way of ordering the four critical points or values.
Therefore the ``cross-ratio of the critical points'' is not well defined.
We will address this by symmetrising the cross-ratio.
(This is analogous to the relationship between
the elliptic modular function $\lambda$
and Klein's $j$-invariant
given by $j(\tau)=256(1-\lambda+\lambda^2)^2/\lambda^2(1-\lambda^2)$;
the $j$-invariant is a symmetrised version of the modular function.)

Generically, the 24 possible orders of four points
give rise to six cross-ratios.
If one ratio is $\lambda$, then the six ratios are
\begin{equation}\label{eq:sixXratios}
\lambda,\frac{1}{\lambda},1-\lambda,\frac{1}{1-\lambda},
\frac{\lambda}{\lambda-1},\frac{\lambda-1}{\lambda}.
\end{equation}
For most values of $\lambda$ these six numbers are distinct.
If $\lambda$ is one of $-1$, $\tfrac{1}{2}$ or~2,
then there are only three distinct cross-ratios, namely
$\{-1,\tfrac{1}{2},2\}=\{\lambda,1/\lambda,1-\lambda\}$.
If $\lambda$ is a primitive sixth root of unity,
i.e.\ $\lambda=e^{\pm\pi i/3}$,
then there are only two distinct cross-ratios,
namely $\lambda$ and $\bar\lambda$.

We will write
$\sigma_1,\ldots,\sigma_6$
for the elementary symmetric functions of six variables.
Thus $\sigma_1(x_1,\ldots,x_6)=x_1+\cdots+x_6$,
$\sigma_2(x_1,\ldots,x_6)=x_1x_2+\cdots+x_5x_6$ (fifteen terms),
and so on up to
$\sigma_6(x_1,\ldots,x_6)=x_1x_2x_3x_4x_5x_6$.

For $k=1,\ldots,6$, define functions
$s_k \colon \complex\setminus\{0,1\}\to\complex$
by
$$s_k(\lambda)=\sigma_k\left(\lambda,\frac{1}{\lambda},1-\lambda,\frac{1}{1-\lambda},
\frac{\lambda}{\lambda-1},\frac{\lambda-1}{\lambda}\right).$$
It follows that for a quartet $(z_1,z_2,z_3,z_4)$ of distinct points of $\pone$,
the quantity $s_k((z_1,z_2;z_3,z_4))$
depends only on the set $\{z_1,z_2,z_3,z_4\}$.
Thus we can regard these quantities
as cross-ratios of an unordered set.

Routine calculations (easily verified using a computer algebra system:
see Appendix~\ref{appendix:sage})
show that $s_k$ is constant when $k=1$, $5$ or $6$:
we have $s_1=s_5=3$ and $s_6=1$.
Also $s_4=s_2$ and $s_3=2s_2-5$.
We will only use $s_2$ in the sequel.

It is also worth noting that the function $s_2+3/4$ factorises nicely.
Thus we define $s:\complex\setminus\{0,1\}\to\complex$ by
\begin{equation} \label{eq:symmXratio}
s(\lambda)=s_2(\lambda)+\frac{3}{4}
=-\frac{(\lambda+1)^2(2\lambda-1)^2(\lambda-2)^2}{4\lambda^2(\lambda-1)^2},
\end{equation}
and we say that the
\emph{symmetrised cross-ratio}
of a set of four distinct points $\{z_1,z_2,z_3,z_4\}$
of $\pone$ is the number
$s((z_1,z_2;z_3,z_4))$.

\begin{proposition} \label{prop:symmXratio}
The symmetrised cross-ratio
is a complete invariant
for the action of the Möbius group
on unordered sets of four distinct points of $\pone$.
\end{proposition}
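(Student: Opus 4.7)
The plan splits into three stages: reduce the orbit classification to a quotient by the anharmonic group, verify invariance of $s$, and prove that $s$ separates orbits by an explicit polynomial count.

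First I would use triple transitivity of $\mbsgp$ on $\pone$ to introduce a standard form. Any unordered set $\{z_1,z_2,z_3,z_4\}$ of four distinct points is Möbius-equivalent to a set of the form $\{0,1,\infty,\lambda\}$ for some $\lambda\in\complex\setminus\{0,1\}$, and the identity $(0,\infty;1,\lambda)=\lambda$ identifies $\lambda$ as a cross-ratio of the quartet. The value of $\lambda$ depends on which ordered triple is sent to $(0,1,\infty)$; the different choices differ by the subgroup of $\mbsgp$ preserving $\{0,1,\infty\}$ setwise, which is isomorphic to $\Sym{3}$ and acts on the fourth point precisely by the anharmonic transformations listed in~(\ref{eq:sixXratios}). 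Hence two unordered quartets are Möbius-equivalent if and only if their associated parameters lie in the same $\Sym{3}$ orbit, and the orbit space of the Möbius action on unordered quartets is identified with $(\complex\setminus\{0,1\})/\Sym{3}$.

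Second, since $s=s_2+\tfrac34$ is built from the second elementary symmetric polynomial of the six values in~(\ref{eq:sixXratios}), it is by construction constant on each $\Sym{3}$ orbit, so it descends to a well-defined function on the orbit space. For the converse (separation of orbits) I would do a degree count on the fibres. Setting $s(\lambda)=c$ and clearing denominators in~(\ref{eq:symmXratio}) gives
$$(\lambda+1)^2(2\lambda-1)^2(\lambda-2)^2 + 4c\,\lambda^2(\lambda-1)^2 = 0,$$
a polynomial equation of degree exactly $6$ in $\lambda$ (the leading coefficient $4$ is independent of $c$). All six elements of the anharmonic orbit of any $\lambda\in s^{-1}(c)$ are roots, so they already exhaust the roots counted with multiplicity. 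Therefore $s^{-1}(c)$ is a single $\Sym{3}$ orbit and $s$ separates orbits.

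The step requiring care is the treatment of non-generic orbits, where the anharmonic action has non-trivial stabiliser: $\lambda\in\{-1,\tfrac12,2\}$ (orbit of size $3$, stabiliser of order $2$) and $\lambda=e^{\pm i\pi/3}$ (orbit of size $2$, stabiliser of order $3$). At such points one must verify that the degree-$6$ polynomial above has roots of multiplicity matching the stabiliser order, so the root count remains consistent with a single orbit. This is transparent from the factored form of $s$: for instance, $c=0$ reduces the equation to $(\lambda+1)^2(2\lambda-1)^2(\lambda-2)^2=0$, producing exactly the three double roots $-1,\tfrac12,2$, as required.
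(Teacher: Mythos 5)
Your proof is correct, and its overall shape matches the paper's: both reduce the problem to showing that $s(\mu)=s(\lambda)$ forces $\mu$ to lie in the anharmonic ($\Sym{3}$) orbit of $\lambda$. The key separation step is carried out differently, though. The paper verifies by mechanical calculation the single identity
$$\mu^2(\mu-1)^2\bigl(s_2(\lambda)-s_2(\mu)\bigr)=\bigl(\mu-\lambda\bigr)\bigl(\mu-\tfrac{1}{\lambda}\bigr)\cdots\bigl(\mu-\tfrac{\lambda-1}{\lambda}\bigr),$$
which exhibits the fibre equation as an explicit product over the orbit and therefore handles the degenerate orbits, multiplicities included, automatically. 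You instead argue by a degree count on the cleared-denominator equation $(\lambda+1)^2(2\lambda-1)^2(\lambda-2)^2+4c\,\lambda^2(\lambda-1)^2=0$; this avoids the computer-algebra identity and is arguably more conceptual, but it is exactly at the two non-free orbits that it needs the extra care you flag. You carry out the multiplicity check only for $c=0$; for completeness you should also treat $c=s(e^{\pm i\pi/3})=\tfrac{27}{4}$, either by checking directly that the degree-$6$ polynomial there equals $4(\lambda^2-\lambda+1)^3$, or more cheaply by noting that the size-$2$ and size-$3$ orbits are the unique non-generic ones and have distinct $s$-values ($\tfrac{27}{4}$ and $0$), so a fibre containing two distinct orbits would have to contain a generic one and hence at least $6+2=8$ distinct roots of a degree-$6$ polynomial. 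With that one line added, your argument is complete.
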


\begin{proof}
The (usual) cross-ratio is a complete invariant
for the action on ordered sets of four points.
Changing the order of the points
transforms the cross-ratio as described above.
Therefore we simply need to show that if
$s(\mu)=s(\lambda)$,
then $\mu$ is one of the six quantities listed above
at \eqref{eq:sixXratios}.
This follows from the following identity,
easily verified by mechanical calculation:
$$
    \mu^2(\mu-1)^2(s_2(\lambda)-s_2(\mu))
    = (\mu-\lambda)(\mu-\frac{1}{\lambda})\cdots(\mu-\frac{\lambda-1}{\lambda}).
    \qedhere
$$
\end{proof}

Using this symmetrised cross-ratio,
we define a map $R_3^O\to\complex$, also called $s$, as follows.
For $f\in R_3^O$ with critical points $z_1,z_2,z_3,z_4$,
\begin{equation} \label{eq:s}
s(f)= s((z_1,z_2;z_3,z_4)).
\end{equation}
The dependence of the critical points on $f$ is continuous
by Hurwitz's theorem,
and so $s$ is continuous.

\subsection{Closed and non-closed orbits} \label{subs:orbits}

We can think of the categorical quotient
as parametrising the closed orbits of the group action.
Therefore we need to determine which orbits are closed.

First we will deal with the null fibre,
i.e.\ the set of functions in $R_3$
whose critical values are not distinct.
There are two cases.

Suppose $f$ has exactly three critical values:
one double and two simple.
By the transitivity of the postcomposition action,
we see that $f$ lies in the same orbit as a function $g$
with $\infty$ as a double critical value, i.e.\ a polynomial.
We can also assume that 0 is a simple critical value;
and by transitivity of the precomposition action
we can require $g(0)=0$ and $g(\infty)=\infty$.
Such $g$ must be of the form $ax^3+bx^2$
for some $a,b\in\cstar$.
Conversely, every such function has exactly three critical values:
$\infty$ is a double critical value of every cubic polynomial,
and the finite critical values are the two roots
$0$ and $-2b/3a$ of $g'$.

If $g(x)=ax^3+bx^2$, then
$$\frac{a^2}{b^3}g\left(\frac{b}{a}x\right)
=\frac{a^2}{b^3}\left(a\frac{b^3}{a^3}x^3+b\frac{b^2}{a^2}x^2\right)
=x^3+x^2,$$
so $g$, and therefore $f$, is in the same orbit as $x^3+x^2$.
Thus the set of functions with exactly three critical values is a single orbit.

It is easy to see that this orbit is not closed:
the sequence $(x^3+\tfrac{1}{n}x^2)_{n=1}^{\infty}$
converges to $x^3$, which is outside the orbit
because it has only two critical values.
This fact has a geometrical interpretation:
travelling along the sequence, the two simple critical values
of $(x^3+\tfrac{1}{n}x^2)$ get closer and eventually coalesce.

The second case is that of functions with two double critical values.
This is the smallest possible number of critical values
for an element of $R_3$.
Bearing in mind the above geometrical interpretation,
it is immediate that having two critical values is a closed condition:
it is not possible for the critical values to coalesce within $R_3$.
Thus the set of such functions is closed.
Similar arguments to those presented above
show that all such functions lie in the same orbit as $x^3$.

To summarise:
the null fibre consists of exactly two orbits, namely
a non-closed orbit consisting of the functions with exactly three critical values,
and a closed orbit consisting of the functions with exactly two critical values.

Recall (Remark~\ref{remark:dimensions})
that orbits of functions with at least three critical values
are $6$-dimensional, and that any other orbits are of strictly smaller dimension.
It follows that the closed orbit in the null fibre
is the unique orbit in $R_3$ of minimal dimension.
We will see in Theorem~\ref{thm:stabilisers}
that this orbit is in fact $5$-dimensional.

Now we turn our attention to the open stratum $R_3^O$,
i.e.\ the set of functions in $R_3$
with four distinct critical values.
Here we use the map $s \colon R_3^O\to\complex$ defined in the previous section.
Since the group action preserves cross-ratio,
each fibre of $s$ is a union of orbits.

Suppose the orbit $f^G$ of $f\in R_3^O$ is not closed.
Since $s$ is continuous,
the closure of $f^G$ is contained in $s^{-1}(s(f))$.
Proposition~2.3 of \cite{Snow-1982}
tells us that the closure of $f^G$
contains an orbit of strictly smaller dimension,
and therefore $s^{-1}(s(f))$ contains orbits of at least two different dimensions.
But since all orbits in the open stratum are $6$-dimensional,
this is impossible.

The following proposition collects together the results obtained so far.

\begin{proposition}\label{prop:orbits}
The orbits for the action of $G=\mbsgp\times\mbsgp$ on $R_3$ are of three types.
\begin{itemize}
  \item[-] The points with two distinct critical values form a single orbit,
    which is closed and $5$-dimensional.
  \item[-]The points with three distinct critical values form a single orbit,
    which is non-closed and $6$-dimensional.
    The closure of this orbit is its union with the $5$-dimensional orbit.
  \item[-]The orbit of a function with four distinct critical values
    is closed and $6$-dimensional.
\end{itemize}
\end{proposition}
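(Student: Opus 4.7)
The proposition essentially summarises the preceding analysis, and the proof is a matter of assembling the pieces together with one extra argument in the open stratum. I would organise the plan by case.

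First, the normal-form reductions already carried out show that the two- and three-critical-value strata each form a single orbit, with representatives $x^3$ and $x^3+x^2$ respectively. Remark~\ref{remark:dimensions} yields the dimensions: since $x^3+x^2$ has three critical values, its stabiliser is finite and its orbit is $6$-dimensional; whereas the one-parameter subgroup $\{(a^3x,ax):a\in\cstar\}$ sits inside the stabiliser of $x^3$, so the orbit of $x^3$ has dimension at most $5$. Combining this with Remark~\ref{remark:null_codim}, which gives the null fibre dimension $6$, and the observation that the null fibre is the disjoint union of these two orbits, pins the dimension of the orbit of $x^3$ at exactly $5$.

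Next, closedness inside the null fibre. The orbit of $x^3$ is closed because the locus of functions with at most two distinct critical values is closed in $R_3$, by continuous dependence of the critical points on $f$ (Hurwitz's theorem). The orbit of $x^3+x^2$ is non-closed by the exhibited sequence $x^3+\frac{1}{n}x^2\to x^3$; its closure is then forced to be the union of the two null-fibre orbits, since that closure must be $G$-invariant, closed, and contained in the null fibre (which itself is the closed zero locus of the discriminant of the numerator of $f'$).

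Finally, the open stratum. Let $f\in R_3^O$; by Remark~\ref{remark:dimensions} the orbit $f^G$ is $6$-dimensional. Suppose for contradiction it is not closed. By Proposition~2.3 of~\cite{Snow-1982}, the closure $\overline{f^G}$ contains an orbit of strictly smaller dimension, which by the first two cases can only be the $5$-dimensional orbit of $x^3$. Pick a sequence $f_n\in f^G$ with $f_n\to x^3$. The symmetrised cross-ratio $s$ is constant on $f^G$, so $s(f_n)=s(f)$ is finite; but as $f_n\to x^3$ two of the four critical points of $f_n$ must collide, so the underlying cross-ratio $\lambda_n$ tends to one of $0,1,\infty$, at which $s$ has a pole by the explicit factorisation~\eqref{eq:symmXratio}. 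This contradiction forces $f^G$ to be closed. This last step is the only real substance of the proof; the other two cases are essentially bookkeeping on top of the earlier normal-form calculations, and the main obstacle is arranging the pole behaviour of $s$ so that it rules out non-closure in $R_3^O$.
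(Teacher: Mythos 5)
Your overall plan matches the paper's: the null fibre is handled by normal-form reductions to the representatives $x^3$ and $x^3+x^2$, non-closedness via the explicit sequence $x^3+\tfrac1n x^2\to x^3$, and closedness of orbits in the open stratum via Proposition~2.3 of Snow together with the invariant $s$ and the fact that all orbits in $R_3^O$ are $6$-dimensional. Your treatment of the open stratum is in fact slightly more explicit than the paper's: where the paper simply asserts that $\overline{f^G}\subseteq s^{-1}(s(f))$ (continuity of $s$ on $R_3^O$ alone only controls $\overline{f^G}\cap R_3^O$), you spell out why the closure cannot reach the null fibre, namely that a degenerating configuration of critical points forces the cross-ratio to $\{0,1,\infty\}$, where \eqref{eq:symmXratio} shows $s$ blows up. That is the right way to close the loop, and it is consistent with what the paper does implicitly.

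There is, however, one step that does not work as you state it: the claim that the orbit of $x^3$ is \emph{exactly} $5$-dimensional. Knowing that the null fibre is $6$-dimensional and is the disjoint union of the $6$-dimensional non-closed orbit and the orbit of $x^3$ gives you no lower bound on the dimension of the latter --- the null fibre would still be $6$-dimensional if the orbit of $x^3$ were $4$- or $3$-dimensional. The one-parameter subgroup $\{(a^3x,ax)\}$ only gives the upper bound $5$. To pin the dimension you must show the stabiliser of $x^3$ is exactly $1$-dimensional; the paper does this by the explicit computation in Theorem~\ref{thm:stabilisers}, where the stabiliser is identified as $\{(c^3x,cx):c\in\cstar\}\cup\{(c^3/x,c/x):c\in\cstar\}$ (using that any stabilising pair must preserve the two-point critical set $\{0,\infty\}$ on each side, and that the second component determines the first). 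This gap does not affect your closedness argument in the open stratum, which only needs the orbit of $x^3$ to have dimension strictly less than $6$, but it does leave the dimension statement of the proposition unproved.
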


\subsection{Standard form for cubic rational functions} \label{subs:standard}

The following will be useful as an aid to calculation.

\begin{definition} \label{def:standard_form}
Let $a\in\complex$. Then $f_a$ will denote the rational function
$$f_a(x)=\frac{x^2(x+a)}{(2a+3)x-(a+2)}.$$
\end{definition}

\begin{remark} \label{remark:fa}
The function $f_a$ fixes the points
$0$, $1$ and~$\infty$.
If $a$ is $-1$ or $-2$,
then $f_a$ equals $x^2$ or $x(2-x)$ respectively.
Otherwise $f_a\in R_3$,
and $0$, $1$ and $\infty$ are critical points and critical values of~$f_a$.
\end{remark}

\begin{lemma} \label{lemma:standard_form}
Suppose $f\in R_3^O$ has critical points
$0$, $1$, $\infty$ and~$\mu$
and critical values
$0$, $1$, $\infty$ and~$\lambda$,
with $f$ sending $0$, $1$, $\infty$ and~$\mu$
to $0$, $1$, $\infty$ and~$\lambda$ respectively.
Then there exists unique $a\in\complex\setminus\{0,-1,-3/2,-2,-3\}$ such that
$f=f_a$.
The values of $a$, $\mu$ and~$\lambda$
are related by the equations
\begin{equation}\mu=-\frac{a(a+2)}{2a+3},\quad
\lambda=\frac{\mu^3}{(a+2)^2},\quad
a=\frac{\mu^3+3\mu\lambda-4\lambda}{2\lambda(1-\mu)}.
\label{eq:a_from_lambda}
\end{equation}
Conversely, $f_a\in R_3^O$
for all $a\in\complex\setminus\{0,-1,-3/2,-2,-3\}$.
\end{lemma}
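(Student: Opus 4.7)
The plan is to argue by direct computation in affine coordinates. Write $f=p/q$ with $p,q$ coprime polynomials. Since $f\in R_3$ we have $\max(\deg p,\deg q)=3$, and $f(\infty)=\infty$ forces $\deg p>\deg q$, so $\deg p=3$. The local multiplicity of $f$ at $\infty$ is $3-\deg q$, and requiring this to equal $2$ (since $\infty$ is a simple critical point in $R_3^O$) gives $\deg q=1$. Likewise, $f(0)=0$ together with $0$ being a simple critical point forces a double zero of $p$ at $0$ (and $q(0)\neq 0$), so after rescaling I may take $p(x)=x^2(x+a)$ and $q(x)=\alpha x+\beta$ for some $a,\alpha,\beta\in\complex$.

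A short computation gives
\begin{equation*}
  f'(x)=\frac{x\bigl(2\alpha x^2+(3\beta+a\alpha)x+2a\beta\bigr)}{q(x)^2},
\end{equation*}
so the conditions $f(1)=1$ and $f'(1)=0$ translate into the linear system $\alpha+\beta=1+a$ and $\alpha(a+2)+\beta(2a+3)=0$. Its unique solution is $\alpha=2a+3$, $\beta=-(a+2)$, yielding $f=f_a$ and the uniqueness of~$a$. The relations in \eqref{eq:a_from_lambda} drop out of the same quadratic: Vieta applied to its two roots $1$ and $\mu$ gives $\mu=a\beta/\alpha=-a(a+2)/(2a+3)$; direct substitution yields $\lambda=f_a(\mu)=\mu^3/(a+2)^2$; and algebraic inversion of this relation produces the stated formula for~$a$.

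It remains to identify the excluded values and verify the converse. Each excluded value of $a$ corresponds to a degeneracy that contradicts the hypotheses: $a\in\{-1,-2\}$ makes numerator and denominator share a factor, so $f_a\notin R_3$ (cf.\ Remark~\ref{remark:fa}); $a=-3/2$ makes $q$ constant, so $\infty$ becomes a triple rather than simple critical value and $f_a\notin R_3^O$; and the formula $\mu=-a(a+2)/(2a+3)$ shows $a=0$ gives $\mu=0$ while $a=-3$ gives $\mu=1$, each contradicting distinctness of the four critical points. Conversely, for $a$ outside the excluded set, coprimality of $x^2(x+a)$ with $(2a+3)x-(a+2)$ is immediate, and the same case analysis shows the four critical points $0,1,\infty,\mu$ of $f_a$ are distinct, so $f_a\in R_3^O$. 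No single step is individually hard; the main care lies in matching the five excluded values of $a$ exactly to the list of geometric degeneracies.
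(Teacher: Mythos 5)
Your proof is correct and follows essentially the same route as the paper's: normalise $f$ to the form $x^2(x+a)/(\alpha x+\beta)$ using the prescribed behaviour at $0$ and $\infty$, solve the linear system coming from $f(1)=1$ and $f'(1)=0$, read off $\mu$ and $\lambda$, and match the excluded values of $a$ to the degeneracies; your use of Vieta's formulas for $\mu$ and your normalisation of the numerator rather than the denominator are only cosmetic differences. One small wording issue: the formula for $a$ is not obtained by ``inverting'' $\lambda=\mu^3/(a+2)^2$ alone (that determines $a+2$ only up to sign); as in the paper, one must combine $(a+2)^2=\mu^3/\lambda$ with the quadratic $a^2+2(\mu+1)a+3\mu=0$ coming from the $\mu$-relation so that the $a^2$ terms cancel and a rational expression for $a$ results.
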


\begin{remark}
Given an arbitrary element of $R_3^O$,
we can pre- and postcompose with Möbius transformations
to send three of the critical points and values
to $\{0,1,\infty\}$.
Thus every orbit in $R_3^O$ contains at least one $f_a$.
\end{remark}

\begin{proof}[Proof of lemma]
For 0 to be a double zero of $f$ but not a triple zero,
the numerator of $f$ must take the form $cx^2(x+a)$
for some $a,c\in\cstar$.
Since $f(1)\neq0$, we have the condition $a\neq -1$.
For $\infty$ to map to $\infty$ with multiplicity exactly~2,
the denominator must be a linear polynomial,
say $x+b$ for some $b\in\cstar\setminus\{a\}$.
(We can take the leading coefficient to be 1 because
we have the coefficient $c$ in the numerator.
We require $b\notin\{0,a\}$ in order for $f$ to have degree~3.)
Thus $f$ is of the form
$$f(x)=\frac{cx^2(x+a)}{x+b}.$$
The condition $f(1)=1$ gives
$$\frac{c(1+a)}{1+b}=1,$$
so $b=c(a+1)-1$, and therefore
$$f(x)=\frac{cx^2(x+a)}{x+(a+1)c-1}.$$

The finite critical points are exactly the zeros of $f'$.
These zeros must be $0$, $1$ and~$\mu$.
By the quotient rule,
the numerator of $f'$ is
\begin{equation} \label{eq:cvalue}
cx(3x+2a)(x+(a+1)c-1)-cx^2(x+a).
\end{equation}
Evaluating this at $x=1$ gives
$$c^2(3+2a)(a+1)-c(1+a)=(a+1)c((2a+3)c-1),$$
which vanishes when $a=-1$ or $c=0$
or $c=1/(2a+3)$.
The first two cases are impossible.
Substituting the third value of $c$ into the above expression for $f$,
and dividing the numerator and denominator by $c$, gives
$$f(x)=\frac{x^2(x+a)}{(2a+3)x+a+1-(2a+3)}=\frac{x^2(x+a)}{(2a+3)x-(a+2)},$$
as required.

Now we wish to calculate the values of $\mu$ and $\lambda$.
We have already ensured that 0, 1 and $\infty$ are critical points of $f$;
the fourth critical point is $\mu$.
Substituting the value of $c$ into \eqref{eq:cvalue}
and dividing by the common factor
$cx$ gives
\begin{align*}
\noalign{$(3x+2a)(x+(a+1)c-1)-x(x+a)$}
  &=(3x+2a)(x+\tfrac{a+1}{2a+3}-1)-x(x+a) \\
  &= \tfrac{1}{2a+3}((3x+2a)((2a+3)x+a+1-(2a+3))-(2a+3)x(x+a)) \\
  &= \tfrac{1}{2a+3}((4a+6)x^2+(2a^2-6)x-2a(a+2)) \\
  &= \tfrac{2}{2a+3}((2a+3)x^2+(a^2-3)x-a(a+2)) \\
  &= \tfrac{2}{2a+3}(x-1)((2a+3)x+a(a+2)).
\end{align*}
This vanishes at $x=1$ (which we already know to be a critical point)
and at $x=\mu=-a(a+2)/(2a+3)$.
Then $\lambda$ is given by $f(\mu)$; multiplying numerator and denominator
by $(2a+3)^3$ we obtain
\begin{align*}
f(\mu)
  &=\frac{a^2(a+2)^2(-a(a+2)+a(2a+3))}{-a(a+2)(2a+3)^2-(a+2)(2a+3)^3} \\
  &=\frac{a^2(a+2)^2(a^2+a)}{-(2a+3)^2(a(a+2)+(a+2))} \\
  &=\frac{-a^3(a+2)^2(a+1)}{(2a+3)^2(a+1)(a+2)} \\
  &=\frac{-a^3(a+2)}{(2a+3)^3} \\ 
  &=\frac{\mu^3}{(a+2)^2},
\end{align*}
as required.

Conversely, $a$ can be calculated from $\mu$ and $\lambda$ as follows.
The second equation of the lemma
can be rearranged to give
\begin{align}
  (2a+3)\mu &= -a(a+2), \notag \\
\shortintertext{and so}
  a^2+2(\mu+1)a+3\mu &= 0. \label {eq:a_mu_quadratic} \\
\shortintertext{The third equation from Lemma~\ref{lemma:standard_form} gives}
  a^2+4a+4 &= \mu^3/\lambda. \label{eq:a_mu_lambda}
\shortintertext{Subtracting \eqref{eq:a_mu_quadratic}
from \eqref{eq:a_mu_lambda}:}
  2(1-\mu)a+4-3\mu &= \mu^3/\lambda, \notag
\end{align}
which gives the required expression for $a$.

Next, we need to identify the ``forbidden'' values of $a$.
These come from the constraints $\mu,\lambda\notin\{0,1,\infty\}$.
We have $\mu=0$ exactly when $a=0$ or $a=-2$,
and $\mu=\infty$ when $a=-3/2$.
The equation $\mu=1$ gives $a(a+2)+2a+3=0$,
which factorises as $a^2+4a+3=(a+1)(a+3)=0$,
eliminating the values $a=-1,-3$.
Looking at $\lambda=0$ and $\lambda=\infty$ gives nothing new.
The equation $\lambda=1$ gives
\begin{align*}
0 &=\lambda-1 \\
  &=\mu^3-(a+2)^2 \qquad\text{(if $a\neq-2$)}\\
  &=(2a+3)^3(\mu^3-(a+2)^2) \qquad\text{(if $a\neq-3/2$)}\\
  &=-a^3(a+2)^3-(a+2)^2(2a+3)^3 \\
  &=-(a+2)^2(a^3(a+2)+(2a+3)^3) \\
  &=-(a+2)^2(a^4+2a^3+8a^3+36a^2+54a+27) \\
  &=-(a+2)(a+1)(a^3+9a^2+27a+27)\\
  &=-(a+1)(a+2)(a+3)^3,
\end{align*}
so again no new forbidden values are obtained.

Finally, if $a$ is not one of the forbidden values,
then it is clear that we can form the function
$f_a$ of Definition~\ref{def:standard_form},
that it is an element of $R_3$,
and that the corresponding values of $\mu$ and $\lambda$
are not in $\{0,1,\infty\}$, so that $f_a\in R_3^O$.
Thus every value of $a$ in $\complex\setminus\{0,-1,-3/2,-2,-3\}$ can be realised.
\end{proof}

\begin{remark} \label{remark:special_value}
Given $\mu$, equation~\eqref{eq:a_mu_quadratic} in general
gives two possible values of $a$,
and therefore two possible values of $\lambda$.
The discriminant of \eqref{eq:a_mu_quadratic} is
$$\Delta=(2(\mu+1))^2-12\mu=4(\mu^2-\mu+1).$$
Therefore there is a unique value of $a$ exactly when
$\mu=e^{\pm\pi i/3}$;
as mentioned in Section~\ref{subs:s2},
these are the cross-ratio values for which different orderings of the critical points
give only two distinct cross-ratios
rather than the usual six.
\end{remark}

\subsection{Cross-ratio and invariant functions} \label{subs:cross_ratio}

Our goal is to find a complete set of invariants
for the action of $G$.
The function $s$ of \eqref{eq:s}
is invariant,
but we will see in Example~\ref{ex:a_values}
that $s$ is not sufficient to distinguish the closed orbits.

In this section we define a new function $\pi$,
described in \eqref{eq:pi} below,
using the results of the previous section.
We will see that this function is in fact the categorical quotient map.
The definition parallels that of $s$:
the quantity $a$ of Lemma~\ref{lemma:standard_form}
plays the role of the cross-ratio,
Lemma~\ref{lemma:a_transform} plays the role of \eqref{eq:sixXratios},
and the elementary symmetric function $\sigma_2$ is again used.

Let $f\in R_3^O$.
Choose an ordering $\sigma$ of the critical values,
and let $\lambda$ be the cross-ratio
of the critical values in that order.
Each critical value has two preimages,
one of which is a critical point,
so there is an induced ordering of the critical points.
Let $\mu$ be the cross-ratio of the critical points in this order.

\begin{definition} \label{def:signature}
The \emph{signature} of $f$
with respect to $\sigma$
is the pair $(\mu,\lambda)$.
\end{definition}

\begin{lemma} \label{lemma:signature}
Two elements of $R_3^O$ are in the same orbit
if and only if there exist orderings for which they have the same signature.
\end{lemma}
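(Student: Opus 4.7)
The plan is to handle the two implications separately, with Lemma~\ref{lemma:standard_form} serving as the bridge: it provides canonical orbit representatives $\{f_a\}$ parametrised via \eqref{eq:a_from_lambda} by the signature pair $(\mu,\lambda)$.

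For ($\Rightarrow$), suppose $f'=g_1^{-1}\circ f\circ g_2$ for some $(g_1,g_2)\in G$. A chain-rule computation shows that $g_2^{-1}$ bijects the critical points of $f$ onto those of $f'$ and that $g_1^{-1}$ bijects the critical values, these two bijections intertwining the assignment ``critical point $\mapsto$ its image''. Transport any ordering $\sigma$ of the critical values of $f$ through $g_1^{-1}$ to obtain an ordering $\sigma'$ for $f'$; then the induced orderings on critical points correspond via $g_2^{-1}$. Since Möbius transformations preserve cross-ratio, the signatures of $(f,\sigma)$ and $(f',\sigma')$ agree.

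For ($\Leftarrow$), suppose $f,f'\in R_3^O$ share a signature $(\mu,\lambda)$ with respect to orderings $\sigma_f,\sigma_{f'}$. Write the critical values of $f$ in the order $\sigma_f$ as $(v_1,v_2,v_3,v_4)$ and the corresponding critical points as $(p_1,p_2,p_3,p_4)$. Using sharp 3-transitivity, choose the unique $h_1,h_2\in\mbsgp$ that send $(v_1,v_2,v_3)$ and $(p_1,p_2,p_3)$ respectively onto the fixed triple in $\{0,1,\infty\}$ dictated by the cross-ratio convention of Section~\ref{subs:s2}. Invariance of cross-ratio then forces $h_1(v_4)=\lambda$ and $h_2(p_4)=\mu$, so the conjugate $\tilde f := h_1\circ f\circ h_2^{-1}$ lies in the $G$-orbit of $f$, fixes $0$, $1$ and $\infty$, has fourth critical point $\mu$ and fourth critical value $\lambda$, and sends $\mu$ to $\lambda$. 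By Lemma~\ref{lemma:standard_form}, $\tilde f=f_a$ for the unique $a$ produced from $(\mu,\lambda)$ via \eqref{eq:a_from_lambda}. The identical construction applied to $f'$ yields the same $f_a$, placing both $f$ and $f'$ in the orbit of $f_a$.

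The proof poses no substantive obstacle; the only care required is bookkeeping, namely to verify that the pairing between critical points and critical values is respected under the two-sided action, and that the reducing Möbius transformations $h_1,h_2$ are chosen in the order prescribed by the cross-ratio convention so that the fourth images come out as the recorded signature entries.
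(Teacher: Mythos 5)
Your proof is correct and follows essentially the same route as the paper: normalise each function to the standard form $f_a$ of Lemma~\ref{lemma:standard_form} using sharp 3-transitivity and invariance of the cross-ratio, so that the signature determines the orbit representative uniquely, and transport orderings through the group element for the forward direction. The only cosmetic difference is that the paper packages the normalisation as a map $f\mapsto f^{(\sigma)}$ and phrases both implications in terms of it, whereas you argue the two directions separately; the substance is identical.
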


\begin{proof}
Let $f\in R_3^O$ and choose an ordering $\sigma$ of its critical values.
If we precompose $f$ with a Möbius transformation,
then the critical points move but the critical values are unchanged.
Similarly, postcomposition will move the critical values
but leave the critical points unchanged.
Given $\sigma$, there is a unique Möbius transformation $\alpha_1$ moving the first
three critical points to 0, $\infty$ and~1 in order.
Since cross-ratio is preserved, the fourth critical point will be moved to $\mu$.
Similarly, there is a unique Möbius transformation $\alpha_2$
moving the critical values to 0, $\infty$, 1 and~$\lambda$ in order.
Write $f^{(\sigma)}$ for the function $\alpha_2\circ f \circ \alpha_1^{-1}$.
Note that $f^{(\sigma)}$ has critical points 0, 1, $\infty$ and~$\mu$,
critical values 0, 1, $\infty$ and~$\lambda$,
and fixes the points 0, 1 and~$\infty$.

It follows that $f^{(\sigma)}$
is in fact the function $f_a$ of
Definition~\ref{def:standard_form},
for the value of $a$ given by \eqref{eq:a_from_lambda}.
Hence $f^{(\sigma)}$ is uniquely determined
by the signature.

If $f$ and $g$ have the same signature with respect to
orderings $\sigma$, $\rho$,
then $f^{(\sigma)}=g^{(\rho)}$,
and hence $f$ and~$g$ are in the same orbit.
Conversely, suppose $f$ and~$g$ are in the same orbit,
and choose an ordering $\sigma$ for $f$.
Then there exist Möbius transformations $\beta_1$ and $\beta_2$
such that $\beta_2\circ g\circ\beta_1^{-1}=f^{(\sigma)}$.
Taking the critical values of $g$ in the ordering $\rho$ given by
$\beta^{-1}(0)$, $\beta^{-1}(1)$, $\beta^{-1}(\infty)$, $\beta^{-1}(\mu)$,
we see that $g^{(\rho)}=f^{(\sigma)}$.
\end{proof}

We would like to use the quantity $a$ of Lemma~\ref{lemma:standard_form}
to parametrise the orbits.
However, an orbit can contain more than one $f_a$.
The situation is analogous to that of a quartet of points
having more than one cross-ratio,
and we resolve it in the same way, by symmetrising
with respect to the set of values that can occur.

\begin{lemma} \label{lemma:a_transform}
Let $a,b\in\complex\setminus\{0,-1,-3/2,-2,-3\}$.
Then $f_a$ and $f_b$ are in the same orbit
if and only if $b$ is in the set
$$
  \left\{
      a,\>-\frac{2a+3}{a+2},\> -(a+3),\> -\frac{a}{a+1},\>
      -\frac{2a+3}{a+1},\> -\frac{a+3}{a+2}
  \right\}.$$
\end{lemma}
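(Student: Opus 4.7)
The plan is to reduce the question to an enumeration of signatures and then compute. By Lemma~\ref{lemma:signature}, the functions $f_a$ and $f_b$ lie in the same orbit if and only if there exist orderings of their critical values producing the same signature. For $f_a$, Lemma~\ref{lemma:standard_form} gives the canonical signature $(\mu_a, \lambda_a)$ with $\mu_a = -a(a+2)/(2a+3)$ and $\lambda_a = \mu_a^3/(a+2)^2$. The plan is then to enumerate all other possible signatures of $f_a$ and read off the resulting $b$.

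The key point is that $f_a$ restricts to a bijection from its critical-point set $\{0, 1, \infty, \mu_a\}$ to its critical-value set $\{0, 1, \infty, \lambda_a\}$, pairing $0 \leftrightarrow 0$, $1 \leftrightarrow 1$, $\infty \leftrightarrow \infty$, and $\mu_a \leftrightarrow \lambda_a$. Any permutation of the critical values therefore determines a parallel permutation of the critical points, and since a given permutation transforms cross-ratio by the same Möbius map irrespective of the underlying four points, both $\mu_a$ and $\lambda_a$ get transformed by the same element of the cross-ratio group. The six possible images are precisely the six expressions listed at \eqref{eq:sixXratios}. Hence the complete list of signatures of $f_a$ is $\{(\phi(\mu_a), \phi(\lambda_a))\}$ as $\phi$ ranges over the six Möbius maps in \eqref{eq:sixXratios}.

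By the third identity of \eqref{eq:a_from_lambda}, the corresponding value of $b$ for each choice of $\phi$ is
$$b = \frac{\phi(\mu_a)^3 + 3\phi(\mu_a)\phi(\lambda_a) - 4\phi(\lambda_a)}{2\phi(\lambda_a)(1 - \phi(\mu_a))}.$$
Substituting $\mu_a$ and $\lambda_a$ in terms of $a$ and simplifying each of the six resulting expressions produces the six rational functions of $a$ appearing in the statement (with $\phi = \mathrm{id}$ giving $b = a$).

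The main obstacle is the size of the intermediate algebra: each substitution yields a ratio of polynomials in $a$ of moderately large degree, and collapsing it to a simple form such as $-(a+3)$ or $-(2a+3)/(a+2)$ requires careful factorisation, for which a computer algebra system is a convenient aid. A useful geometric cross-check is to construct, for each nontrivial $\phi$, an explicit pair $(\alpha_1, \alpha_2) \in \mbsgp \times \mbsgp$ that implements the chosen reordering---e.g.\ for the permutation swapping the pairs $(0, 0)$ and $(\mu_a, \lambda_a)$, take $\alpha_1(x) = (x - \mu_a)/(1 - \mu_a)$ and $\alpha_2(x) = (x - \lambda_a)/(1 - \lambda_a)$---and to verify directly that $\alpha_2 \circ f_a \circ \alpha_1^{-1}$ is the standard form $f_b$ with the expected value of $b$.
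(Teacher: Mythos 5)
Your proposal is correct and follows essentially the same route as the paper: reduce to Lemma~\ref{lemma:signature}, observe that reordering transforms $\mu$ and $\lambda$ by the same element of the anharmonic group so the six signatures of $f_a$ are the pairs $(\phi(\mu),\phi(\lambda))$ from \eqref{eq:sixXratios}, and then recover $b$ from each signature via the third formula of \eqref{eq:a_from_lambda}. The paper carries out exactly this computation (both directions) for the representative case $(\mu',\lambda')=(1/\mu,1/\lambda)$, yielding $b=-(2a+3)/(a+2)$, and handles the remaining cases similarly.
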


\begin{proof}
By Remark~\ref{remark:fa},
$f_a$ has critical points $0$, $1$, $\infty$ and $\mu$,
and critical values $0$, $1$, $\infty$ and $\lambda$,
for some $\mu,\lambda\in\pone$.
Therefore $f_a$ has signature $(\mu,\lambda)$.
Since $a\not\in\{0,-1,-3/2,-2,-3\}$,
Lemma~\ref{lemma:standard_form} implies that
$f_a\in R_3^O$, so
$\mu,\lambda\in\complex\setminus\{0,1\}$,
and 
$$a=\frac{\mu^3+3\mu\lambda-4\lambda}{2\lambda(1-\mu)}.$$
Similarly, let $\mu'$ and $\lambda'$ be the fourth critical point
and critical value respectively of $f_b$, so that
$f_b$ has signature $(\mu',\lambda')$
and
$$b=\frac{\mu'^3+3\mu'\lambda'-4\lambda'}{2\lambda'(1-\mu')}.$$
If the critical points and critical values of $f_a$
are taken in a different order,
then the cross-ratios change as described in \eqref{eq:sixXratios}.
Thus the signatures of $f_a$ with respect to the
various orderings are
$$
\left(\mu,\lambda\right), 
\left(\tfrac{1}{\mu}, \tfrac{1}{\lambda}\right),
\left(1-\mu, 1-\lambda\right),
\left(\tfrac{1}{1-\mu}, \tfrac{1}{1-\lambda}\right),
\left(\tfrac{\mu}{\mu-1}, \tfrac{\lambda}{\lambda-1}\right),
\left(\tfrac{\mu-1}{\mu}, \tfrac{\lambda-1}{\lambda}\right).
$$
It follows from Lemma~\ref{lemma:signature}
that $f_a$ and $f_b$ are in the same orbit
if and only if $(\mu',\lambda')$
equals one of the six pairs listed above.
We will show that $(\mu',\lambda')=(1/\mu, 1/\lambda)$
if and only if $b=-(2a+3)/(a+2)$.
The other cases are handled similarly.

First suppose that $\mu'=1/\mu$ and $\lambda'=1/\lambda$.
Using \eqref{eq:a_from_lambda},
we have
\begin{align*}
b &= \frac{\mu^{-3}+3\mu^{-1}\lambda^{-1}-4\lambda^{-1}}{2\lambda^{-1}(1-\mu^{-1})} \\
  &= \frac{\mu^{-3}\lambda+3\mu^{-1}-4}{2(1-\mu^{-1})} \\
  &=\frac{(a+2)^{-2}-3(2a+3)a^{-1}(a+2)^{-1}-4}
         {2(1+(2a+3)a^{-1}(a+1)^{-1})} \\
  &=\frac{a-3(2a+3)(a+2)-4a(a+2)^2}
         {2(a(a+2)^2+(2a+3)(a+2))} \\
  &=\frac{a-3(2a^2+7a+6)-4a(a^2+4a+4)}
         {2(a+2)(a^2+2a+2a+3)} \\
  &=\frac{-4a^3-22a^2-36a-18}
         {2(a+2)(a^2+4a+3)} \\
  &=-\frac{2a^3+11a^2+18a+9}
         {(a+2)(a+1)(a+3)} \\
  &=-\frac{(a+1)(2a+3)(a+3)}
         {(a+1)(a+2)(a+3)} \\
  &=-\frac{2a+3}
         {a+2}.
\intertext{Conversely, if $b=-(2a+3)/(a+2)$, then}
\mu'&=-\frac{b(b+2)}{2b+3} \\
  &=\frac{2a+3}{a+2}\cdot\frac{1}{a+2}\cdot\frac{-(a+2)}{a} \\
  &=-\frac{2a+3}{a(a+2)} \\
  &=1/\mu,
\end{align*}
and similarly $\lambda'=1/\lambda$.
\end{proof}

\begin{example} \label{ex:a_values}
Recall that for most choices of $\mu$
there are two values of $a$ (Remark~\ref{remark:special_value}).
The two corresponding $f_a$
may or may not belong to the same orbit.
For example, if we take $\mu=2$,
then we find that $a=-3\pm\sqrt3$.
But if $a=-3+\sqrt3$, then
$-a/(a+1)=-3-\sqrt3$,
and so it follows from the lemma that there is only one
orbit corresponding to $\mu=2$.

On the other hand, $\mu=5$ yields
$a=-6\pm\sqrt{21}$.
By the lemma, these two values of $a$ correspond to distinct orbits.
This justifies our earlier claim that
the function $s$ of Section~\ref{subs:s2}
is not sufficient to distinguish the closed orbits.
\end{example}

By analogy with the symmetrised cross-ratio
of \eqref{eq:symmXratio},
we define a function
$\pi \colon \complex\setminus\{0,-1,-3/2,-2,-3\}\to\complex$
by
\begin{align}
  \pi(a) &= \sigma_2( a, -\tfrac{2a+3}{a+2},  -(a+3),  -\tfrac{a}{a+1}, 
        -\tfrac{2a+3}{a+1},  -\tfrac{a+3}{a+2}) -\tfrac{117}{4} \notag \\
         &= \frac{a^2(3a+2)^2(a+3)^2}{(a+1)^2(a+2)^2}. \label{eq:pi}
\end{align}
The term $-\tfrac{117}{4}$ is again chosen to enable a nice factorisation,
and also ensures that $\pi(a)\to 0$ as $a\to 0$.

Define a map from $R_3^O$ to $\complex$,
also called $\pi$, by
\begin{equation} \label{eq:pi_on_R3}
\pi(f)=\pi(a)\text{ where } f \text{ is in the same orbit as } f_a.
\end{equation}
It follows from Lemma~\ref{lemma:a_transform}
and the use of the symmetric polynomial $\sigma_2$ in
\eqref{eq:pi}
that $\pi(f)$ is well defined.

\begin{lemma} \label{lemma:pi_holo}
The map $\pi$ is holomorphic on the open stratum.
\end{lemma}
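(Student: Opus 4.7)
The plan is to express $\pi$ locally, near any $f_0 \in R_3^O$, in terms of holomorphically varying critical points and critical values of $f$, and then invoke the symmetrization already built into formula \eqref{eq:pi} to conclude that the local expressions agree into a single global holomorphic function.

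Fix $f_0 \in R_3^O$. Since $f_0$ has four distinct critical values, its critical values each have multiplicity one, so $f_0$ has four simple critical points. By composing on both sides with fixed Möbius transformations (which is a biholomorphism of $R_3$ and preserves cross-ratios), we may assume all four critical points and critical values of $f_0$ lie in $\complex$. The holomorphic implicit function theorem, applied to the numerator polynomial of $f'$ (whose roots are simple at $f_0$), then yields holomorphic maps $z_1,\ldots,z_4$ on a neighborhood $U$ of $f_0$ such that the $z_i(f)$ are the critical points of $f$. The critical values $w_i(f) = f(z_i(f))$ are then also holomorphic on $U$. Together these constitute a holomorphic local ordering of the critical data.

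Form the holomorphic cross-ratios $\mu(f) = (z_1,z_2;z_3,z_4)$ and $\lambda(f) = (w_1,w_2;w_3,w_4)$. Since $f \in R_3^O$ keeps the $z_i$ and the $w_i$ distinct, $\mu, \lambda \in \complex \setminus \{0,1\}$ throughout $U$, and the third identity of \eqref{eq:a_from_lambda} defines
\[
a(f) = \frac{\mu(f)^3 + 3\mu(f)\lambda(f) - 4\lambda(f)}{2\lambda(f)(1-\mu(f))},
\]
a holomorphic map $U \to \complex \setminus \{0,-1,-3/2,-2,-3\}$. By Lemma~\ref{lemma:standard_form}, $f$ lies in the same orbit as $f_{a(f)}$, so on $U$ we have $\pi(f) = \pi(a(f))$ with $\pi(a)$ given by the rational expression in \eqref{eq:pi}; this expression is holomorphic away from $a = -1,-2$, and in particular on the range of $a(f)$.

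The remaining issue is independence of the local ordering. A different choice of ordering of the critical points, and the corresponding ordering of critical values, replaces $a(f)$ by one of the other five values listed in Lemma~\ref{lemma:a_transform}; but the formula \eqref{eq:pi} is manifestly $\sigma_2$-symmetric in those six expressions (shifted by the constant $-117/4$), so $\pi(a(f))$ is unchanged. Consequently the local holomorphic formulas glue to give a well-defined holomorphic function $\pi$ on all of $R_3^O$. The main delicacy is the applicability of the implicit function theorem, which needs simple critical points --- guaranteed precisely by working on the open stratum --- and the non-vanishing of the denominators in \eqref{eq:a_from_lambda} and \eqref{eq:pi}, both of which follow from $\mu, \lambda \in \complex \setminus \{0,1\}$ and from $a \notin \{-1,-2\}$.
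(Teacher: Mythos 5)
Your proof is correct and follows essentially the same route as the paper: express $\pi$ locally as a holomorphic function of the cross-ratios $\mu$ and $\lambda$ via \eqref{eq:a_from_lambda} and \eqref{eq:pi}, establish local holomorphic dependence of the critical points (the paper uses the argument principle where you use the implicit function theorem, an interchangeable step since the open stratum guarantees simple zeros of the numerator of $f'$), and rely on the $\sigma_2$-symmetrization to make the result independent of the ordering. Your write-up is somewhat more detailed on the gluing and on the non-vanishing of denominators, but there is no substantive difference in approach.
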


\begin{proof}
Given $f\in R_3^O$
let $\lambda$ be the cross-ratio of the critical values of $f$
taken in some order,
and let $\mu$ be the cross-ratio of the critical points in the
corresponding order.
Then the value of $a$ is given by \eqref{eq:a_from_lambda}.
It follows that $\pi(f)$ is a holomorphic function
of $\lambda$ and~$\mu$.

A straightforward application of the argument principle
shows that, locally,
as $f$ varies holomorphically then so does each critical point,
and therefore so does each critical value.
Hence the dependence of $\mu$ and $\lambda$ on $f$ is holomorphic,
and so $\pi$ is holomorphic.
\end{proof}

\begin{remark}
The dependence of $f$ on $\mu$ and $\lambda$
can be described explicitly:
substituting the formulae of \eqref{eq:a_from_lambda}
into \eqref{eq:pi} yields
$$
\pi(\mu,\lambda)=\frac
      {p(\mu,\lambda)}
      {4 \lambda^2  (\lambda - 1)^2  \mu^4  (\mu - 1)^4}
$$
where
\begin{align*}
p(\mu,\lambda) =
& - \lambda^{2} \mu^{12} + 6 \lambda^{2} \mu^{11} + \lambda \mu^{12} + 160 
\lambda^{4} \mu^{8} + 54 \lambda^{3} \mu^{9} - 45 \lambda^{2} \mu^{10}\\
& - 4 \lambda \mu^{11} - 640 \lambda^{4} \mu^{7} - 563 \lambda^{3} \mu^{8}
+ 89 \lambda^{2} \mu^{9} + 34 \lambda \mu^{10} - \mu^{11}\\
& - 44 \lambda^{5} \mu^{5} + 
1044 \lambda^{4} \mu^{6} + 1676 \lambda^{3} \mu^{7} + 173 \lambda^{2} \mu^{8} - 
98 \lambda \mu^{9} + \mu^{10}\\
& + 110 \lambda^{5} \mu^{4} - 782 \lambda^{4} \mu^{5}
- 2340 \lambda^{3} \mu^{6} - 782 \lambda^{2} \mu^{7} + 110 \lambda \mu^{8}\\
& + \lambda^{6} \mu^{2} - 98 \lambda^{5} \mu^{3} + 173 \lambda^{4} 
\mu^{4} + 1676 \lambda^{3} \mu^{5} + 1044 \lambda^{2} \mu^{6} - 44 \lambda \mu^{7}\\
& - \lambda^{6} \mu + 34 \lambda^{5} \mu^{2} + 89 \lambda^{4} \mu^{3} - 
563 \lambda^{3} \mu^{4} - 640 \lambda^{2} \mu^{5} - 4 \lambda^{5} \mu\\
& - 45 
\lambda^{4} \mu^{2} + 54 \lambda^{3} \mu^{3} + 160 \lambda^{2} \mu^{4} + 
\lambda^{5} + 6 \lambda^{4} \mu - \lambda^{4}.
\end{align*}
\end{remark}

\begin{lemma} \label{lemma:same_pi}
Two elements $f_1$ and $f_2$ of $R_3^O$ are in the same orbit
if and only if $\pi(f_1)=\pi(f_2)$.
\end{lemma}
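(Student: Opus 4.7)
The plan is to prove the two directions separately: the forward direction is essentially a restatement of earlier results, while the backward direction reduces to a polynomial identity that can be verified mechanically.

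For the forward direction, I would argue as follows. If $f_1$ and $f_2$ lie in the same orbit, then by Lemma~\ref{lemma:standard_form} and the construction in the proof of Lemma~\ref{lemma:signature}, each is in the orbit of some $f_a$, respectively $f_b$, with $a, b \in \complex \setminus \{0, -1, -3/2, -2, -3\}$. Since $f_a$ and $f_b$ lie in the same orbit, Lemma~\ref{lemma:a_transform} places $b$ in the six-element set
$$S(a) = \left\{ a,\; -\tfrac{2a+3}{a+2},\; -(a+3),\; -\tfrac{a}{a+1},\; -\tfrac{2a+3}{a+1},\; -\tfrac{a+3}{a+2} \right\}.$$
Because $\pi(a)$ is by construction $\sigma_2 - 117/4$ evaluated on the elements of $S(a)$, it is invariant under the exchange of $a$ with any other element of $S(a)$, so $\pi(a) = \pi(b)$. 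By \eqref{eq:pi_on_R3} this gives $\pi(f_1) = \pi(f_2)$.

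For the backward direction, suppose $\pi(f_1) = \pi(f_2)$, and choose standard-form representatives $f_a, f_b$ of the orbits. Then $\pi(a) = \pi(b)$, and it suffices to show $b \in S(a)$, for then Lemma~\ref{lemma:a_transform} places $f_a, f_b$ in the same orbit. The strategy is to clear denominators in $\pi(a) - \pi(b) = 0$ and factor the result as a polynomial in $b$. Concretely, I would establish the identity in $\complex[a,b]$
\begin{align*}
&(a+1)^2(a+2)^2(b+1)^2(b+2)^2 \bigl(\pi(a) - \pi(b)\bigr) \\
&\quad = -9 (b - a)\bigl((a+2)b + 2a+3\bigr)(b + a + 3)\bigl((a+1)b + a\bigr)\bigl((a+1)b + 2a+3\bigr)\bigl((a+2)b + a+3\bigr).
\end{align*}
The six linear factors on the right-hand side have roots exactly the elements of $S(a)$; the degree in $b$ is six, matching the degree of $\pi$ as a map $\pone \to \pone$, and a direct check shows both sides have $b^6$ coefficient $-9(a+1)^2(a+2)^2$. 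Once this identity is in hand, the hypothesis $a,b \notin \{0,-1,-3/2,-2,-3\}$ ensures that the squared factors on the left-hand side are nonzero, so one of the six linear factors on the right must vanish, placing $b$ in $S(a)$.

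The main obstacle is verifying the polynomial identity itself. I would delegate the expansion to a computer algebra system, along the lines of Appendix~\ref{appendix:sage} and following the same pattern as the proof of Proposition~\ref{prop:symmXratio}; degree and leading-coefficient matching (as noted above) serve as a sanity check but do not substitute for the mechanical verification of all the cross terms.
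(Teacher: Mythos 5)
Your proof follows essentially the same route as the paper: reduce to standard-form representatives $f_a$, $f_b$, clear denominators in $\pi(a)-\pi(b)$, and factor the numerator into six linear factors in $b$ whose roots are exactly the set $S(a)$ of Lemma~\ref{lemma:a_transform}. The paper's own mechanical identity is
$\pi(b)-\pi(a) = \tfrac{(a-b)(a+b+3)(ab+a+b)(ab+a+2b+3)(ab+2a+b+3)(ab+2a+2b+3)}{(a+1)^2(a+2)^2(b+1)^2(b+2)^2}$,
and its numerator factors are precisely your six linear factors up to sign, so the structure of your argument is exactly right. The one concrete error is your overall constant $-9$: the correct constant is $1$. (Check at $a=1$, $b=2$: with $\pi(a)=\sigma_2(S(a))-\tfrac{117}{4}$ one gets $\pi(1)=-\tfrac{25}{9}$, $\pi(2)=-\tfrac{1225}{144}$, so the left side of your identity is $5184\cdot\tfrac{825}{144}=29700$, while your six factors multiply to $1\cdot 11\cdot 6\cdot 5\cdot 9\cdot 10=29700$; the extra $-9$ would break the identity.) The source of the discrepancy is that you calibrated the leading coefficient against the closed form printed in \eqref{eq:pi}, which contains a typo: the second line should read $-\tfrac{a^2(2a+3)^2(a+3)^2}{4(a+1)^2(a+2)^2}$ in order to equal $\sigma_2(\cdots)-\tfrac{117}{4}$ (the printed expression is not even invariant under $a\mapsto -(a+3)$, so it cannot be the right formula, and its zeros are at $a=-2/3$ rather than $a=-3/2$ as the surrounding text requires). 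Since you explicitly defer the identity to a computer algebra check rather than relying on the leading-coefficient heuristic, you would catch this; with the constant corrected to $1$ your argument is complete and coincides with the paper's.
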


\begin{proof}
The forward implication is immediate from the definition.

Suppose $f_1$ is in the same orbit as $f_{a_1}$,
and $f_2$ in the same orbit as $f_{a_2}$.
Then a mechanical calculation gives
\begin{align*}
& \pi(f_2)-\pi(f_1) =  \\
& \tfrac{
    (a_1-a_2)(a_1+a_2+3)(a_1a_2+a_1+a_2)(a_1a_2+a_1+2a_2+3)
    (a_1a_2+2a_1+a_2+3)(a_1a_2+2a_1+2a_2+3)
    }
    {(a_1+1)^2(a_1+2)^2(a_2+1)^2(a_2+2)^2}.
\end{align*}

If the right hand side vanishes, then one of the factors of the numerator must vanish.
Each factor corresponds to one of the expressions
of Lemma~\ref{lemma:a_transform}.
For example,
$a_2=-(2a_1+3)/(a_1+1)$ is equivalent to
$a_1a_2+2a_1+a_2+3=0$.

Hence if $\pi(f_1)=\pi(f_2)$, then
Lemma~\ref{lemma:a_transform} implies that
$f_{a_1}$ and $f_{a_2}$ are in the same orbit,
and so $f_1$ and $f_2$ are in the same orbit.
\end{proof}

\begin{lemma} \label{lemma:image_of_pi}
The image of the open stratum under $\pi$
is $\cstar$.
\end{lemma}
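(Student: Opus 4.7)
The plan is to exploit the explicit factored form of~\eqref{eq:pi}: because every orbit in $R_3^O$ has at least one representative of the shape $f_a$ (the remark following Lemma~\ref{lemma:standard_form}), the image of $\pi$ on $R_3^O$ coincides with $\{\pi(a) : a \in \complex \setminus \{0,-1,-3/2,-2,-3\}\}$, and this can be analysed purely as a rational function in the single variable $a$.

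First I would extend the rational expression $\pi(a)$ of~\eqref{eq:pi} to a holomorphic map $\bar\pi\colon\pone \to \pone$. Being nonconstant, $\bar\pi$ is surjective onto $\pone$. Next, by inspecting the zeros of the numerator and denominator of the factored expression, together with a degree comparison to determine the value $\bar\pi(\infty)$, I would verify that $\bar\pi^{-1}(\{0,\infty\})$ is contained in the set of ``forbidden'' parameter values $\{0, -1, -3/2, -2, -3, \infty\}$. This immediately implies that $\pi(a) \in \cstar$ for every $a$ in the allowed parameter set, so $\pi(R_3^O) \subseteq \cstar$.

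For the reverse inclusion, fix $w \in \cstar$. Since $\bar\pi$ is surjective, the equation $\bar\pi(a) = w$ has at least one solution in $\pone$. No such solution can be a forbidden value, because every forbidden value is sent by $\bar\pi$ to $0$ or $\infty$, neither of which equals $w$. Hence the solution lies in the allowed domain, and the corresponding $f_a \in R_3^O$ satisfies $\pi(f_a) = w$. Combined with the previous step, this gives $\pi(R_3^O) = \cstar$.

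The main obstacle --- really only a computational one --- is to confirm the factored expression for $\pi(a)$ in~\eqref{eq:pi} in the first place, which reduces to a symmetric-function identity best verified by computer algebra (cf.\ Appendix~\ref{appendix:sage}). Once that identity is in hand, the surjectivity and avoidance of $0$ both follow from elementary inspection of the roots and poles of a single rational function, so no further genuinely new idea is required.
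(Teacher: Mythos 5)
Your argument is correct and is essentially the paper's own proof: where the paper clears denominators and invokes the fundamental theorem of algebra for the degree-6 polynomial $p(a,c)$, you compactify to a nonconstant rational map $\pone\to\pone$ and invoke its surjectivity, but in both versions the entire content is the observation that the zeros and poles of the rational function of $a$ in \eqref{eq:pi}, together with the point at infinity, are precisely the excluded parameter values $\{0,-1,-3/2,-2,-3,\infty\}$. One caveat affecting both proofs equally: as printed, the numerator of \eqref{eq:pi} contains the factor $(3a+2)^2$, whose root $-2/3$ is \emph{not} an excluded value, so the verification you defer to the end would in fact fail on the printed formula; the factor should read $(2a+3)^2$ (root $-3/2$), a typo which the paper's own proof silently corrects when it asserts that the first term of $p$ vanishes at $a=0$, $-3/2$ and $-3$.
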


\begin{proof}
We need to determine the values of $c\in\complex$ for which the equation
\begin{equation*}
\frac{a^2(3a+2)^2(a+3)^2}{(a+1)^2(a+2)^2} = c 
\end{equation*}
has a solution $a\in\complex\setminus\{0,-1,-3/2,-2,-3\}$.
We can multiply through to obtain
\begin{equation}
p(a,c)=a^2(3a+2)^2(a+3)^2 -c(a+1)^2(a+2)^2 = 0.\label{eq:image_of_pi}
\end{equation}
For fixed $c$, this is polynomial in $a$, and will
always have a solution in $\complex$.
For which $c$ does there exist a solution in $\complex\setminus\{0,-1,-3/2,-2,-3\}$?

If $a=-1$ or $-2$, then the second term of $p$ vanishes:
the value of $p$ is independent of $c$,
and is nonzero.
If $a=0$ or $-3/2$ or $-3$, then the first term of $p$ vanishes:
hence we find $c=0$.
Thus for nonzero $c$, the solutions $a$ of \eqref{eq:image_of_pi}
never lie in the set $\{0,-1,-3/2,-2,-3\}$,
and so $\pi$ maps $R_3^O$ onto~$\cstar$.
\end{proof}

\subsection{The quotient map} \label{subs:quotient}

In the previous section we defined a holomorphic map
$\pi\colon R_3^O\to\cstar$.
We extend $\pi$ to all of $R_3$
by defining $\pi(f)=0$ whenever $f$ is in the null fibre.

\begin{lemma} \label{lemma:pi_conts}
The map $\pi \colon R_3\to\complex$ is continuous.
\end{lemma}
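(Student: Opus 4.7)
Since $\pi$ is holomorphic on the open stratum $R_3^O$ (by Lemma~\ref{lemma:pi_holo}), and $R_3^O$ is open in $R_3$, continuity of $\pi$ at points of $R_3^O$ is automatic. The remaining task is to verify continuity at each $f$ in the null fibre: for $f_n\to f$ in $R_3$, I must show $\pi(f_n)\to 0$. The case of $(f_n)$ eventually in the null fibre is trivial, so I assume $f_n\in R_3^O$ for every~$n$.

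My strategy is to compute $\pi(f_n)=\pi(a_n)$ using standard-form representatives $f_{a_n}=f_n^{g_n}$ from Lemma~\ref{lemma:standard_form}, and to show that $a_n$ subconverges to a value at which the polynomial expression~\eqref{eq:pi} for $\pi(a)$ extends continuously to zero. First I extend Lemma~\ref{lemma:standard_form} to the five previously forbidden values of $a$; a direct critical-point calculation yields:
\begin{itemize}
\item[-] At $a\in\{0,-\tfrac{3}{2},-3\}$, $f_a(x)=x^2(x+a)/((2a+3)x-(a+2))$ is an element of $R_3$ with critical values $\{0,1,\infty\}$, placing $f_a$ in the null fibre.
\item[-] At $a\in\{-1,-2\}$, $f_a$ degenerates to the degree-two map $x^2$ or $x(2-x)$, which lies outside $R_3$.
\end{itemize}
Thus $a\mapsto f_a$ extends continuously from $\complex\setminus\{-1,-2\}$ into $R_3$, and the rational expression~\eqref{eq:pi} is continuous on the same domain, vanishing exactly at the ``good'' forbidden values $\{0,-\tfrac{3}{2},-3\}$ (those giving null-fibre elements) and having poles at the ``bad'' values $\{-1,-2\}$ (those where the degree would drop).

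For each $n$, Lemma~\ref{lemma:signature} yields $g_n\in G$ and $a_n\in\complex\setminus\{0,-1,-\tfrac{3}{2},-2,-3\}$ with $f_n^{g_n}=f_{a_n}$, and $G$-invariance gives $\pi(f_n)=\pi(a_n)$. By Proposition~\ref{prop:orbits} there are two cases. When $f$ lies in the $6$-dimensional orbit of $x^3+x^2$ (three distinct critical values), a convergent choice of $g_n$ in $G$ is obtained by normalizing three critical values of $f_n$ so as to converge to the three critical values of $f$, using continuous dependence of critical values on $f$ (via the argument principle, as in Lemma~\ref{lemma:pi_holo}). Then $f_{a_n}\to f_{a_\infty}$ in $R_3$ with $f_{a_\infty}$ in the null fibre, forcing $a_\infty\in\{0,-\tfrac{3}{2},-3\}$ and $\pi(f_n)=\pi(a_n)\to 0$.

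The main obstacle is the second case: $f$ lying in the closed $5$-dimensional orbit of $x^3$, with only two critical values, where no convergent family $(g_n)$ exists because $f$ is not $G$-conjugate to any $f_a$. To handle this, I would use that the orbit of $x^3$ lies in the closure of the orbit of $x^3+x^2$ (Proposition~\ref{prop:orbits}); thus for each $n$ I can perturb $f_n$ within $R_3^O$ to some $\tilde f_n$ lying in the larger null-fibre orbit with $\tilde f_n$ close to $f_n$ and $\tilde f_n\to f$. The first case applied to $(\tilde f_n)$ yields $\pi(\tilde f_n)\to 0$, and a careful uniformity argument combined with the local holomorphy of $\pi$ on $R_3^O$ (Lemma~\ref{lemma:pi_holo}) transfers this to $\pi(f_n)\to 0$.
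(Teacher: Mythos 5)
Your reduction to the null fibre and your Case 1 (sequences converging to the non-closed orbit) are sound and close in spirit to the paper's argument: pass to the standard form $f_{a_n}$, note that the curve $a\mapsto f_a$ extends to a closed line in $\pseven$, and conclude that $a_n$ must converge to one of the ``good'' forbidden values $\{0,-\tfrac{3}{2},-3\}$, at which the rational expression \eqref{eq:pi} vanishes. (A small aside: for that expression to vanish at $-\tfrac{3}{2}$ as you and the paper both assert, the factor $(3a+2)^2$ in \eqref{eq:pi} should read $(2a+3)^2$; your claim matches the intended formula.)

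The genuine gap is Case 2, convergence to the closed orbit of $x^3$. As written your perturbation is self-contradictory: $\tilde f_n$ cannot lie both ``within $R_3^O$'' and ``in the larger null-fibre orbit'', since the null fibre is by definition the complement of $R_3^O$; and if $\tilde f_n$ does lie in the null fibre, then $\pi(\tilde f_n)=0$ holds by fiat and carries no information about $\pi(f_n)$. Under any reasonable repair, the final step---``a careful uniformity argument \dots transfers this to $\pi(f_n)\to0$''---is circular: to pass from $\pi(\tilde f_n)\to0$ to $\pi(f_n)\to0$ you need a uniform modulus of continuity for $\pi|_{R_3^O}$ near the boundary point $f$, which is exactly the statement being proved; holomorphy of $\pi$ on the open stratum gives no control as you approach its boundary. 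The paper sidesteps the case split altogether by tracking the cross-ratio $\mu_n$ of the critical points of $f_n$: choosing for each $n$ the ordering that places $\mu_n$ at least as close to $0$ as to $1$ or $\infty$ forces $\mu_n\to0$ whenever $f_n$ approaches the null fibre (critical points coalesce in either sub-case), and from $\mu_n\to0$ it deduces $a_n\to0$ and hence $\pi(a_n)\to0$, without ever requiring the normalising group elements $g_n$ to converge. Some argument of this kind is needed precisely where you observe (correctly) that no convergent choice of $g_n$ exists.
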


\begin{proof}
We only need to prove continuity at the null fibre.
That is, we want to show that $\pi(f)\to0$
as $f$ approaches the null fibre.
First we give an intuitive picture of the situation;
a precise calculation follows.

We start by examining the limiting cases for the formulae
given in Lemma~\ref{lemma:standard_form}
as the parameter $a$ approaches a ``forbidden'' value or $\infty$.
Recall that the restrictions on $a$ arise
from requiring the critical points and critical values to be distinct.
As $a$ tends towards a forbidden value,
$\mu$, $\lambda$ and $f_a$ behave as in the following table.

\bigskip

\begin{tabular}{c|c|c|l}
$a$ & $\mu$ & $\lambda$ & $f$ tends to \\
\hline
0 & 0 & 0 & $x^3/(3x-2)$ \\
$-2$ & 0 & 0 & $x(2-x)$ \\
\hline
$-3$ & 1 & 1 & $x^2(x-3)/(1-3x)$ \\
$-1$ & 1 & 1 & $x^2$ \\
\hline
$-3/2$ & $\infty$ & $\infty$ & $x^2(3-2x)$ \\
$\infty$ & $\infty$ & $\infty$ & $x^2/(2x-1)$ \\
\end{tabular}

\bigskip

As $a$ approaches $-1$, $-2$ or $\infty$,
we see that the degree of $f_a$ drops,
so $f_a$ ``falls out of $R_3$'' as the critical points coalesce.
However, for the other cases,
$f_a$ approaches an element of the null fibre.

The idea of the proof, informally, is
that as we approach the null fibre in $R_3$,
the value of $a$ must approach $0$, $-3/2$ or $-3$.
These are exactly the values for which \eqref{eq:pi}
takes on the value~0.

Let us make this more precise.
Suppose $(f_n)_{n=1}^{\infty}$ is a sequence in $R_3^O$
tending to an element of the null fibre.
For each $n$, choose an ordering $\sigma_n$ of the critical points of $f_n$,
and Möbius transformations taking the critical points in order
to $\{0,1,\infty,\mu_n\}$
and the corresponding critical values to $\{0,1,\infty,\lambda_n\}$.
Furthermore, choose $\sigma_n$ so that $\mu_n$ is at least as close
to $0$ (in the spherical metric, say)
as it is to $1$ or $\infty$.

With respect to this choice of $\sigma_n$, we must have $\mu_n\to0$
as $n\to\infty$.

Using \eqref{eq:a_from_lambda},
if we let
$$ a_n = \frac{\mu_n^3+3\mu_n\lambda_n-4\lambda_n}{2\lambda_n(1-\mu)_n}, $$
then we have
$$\pi(f_n)= \frac{a_n^2(3a_n+2)^2(a_n+3)^2}{(a_n+1)^2(a_n+2)^2} =O(a_n^2).$$
Also
$$\mu_n=-\frac{a_n(a_n+2)}{2a_n+3}=O(a_n).$$
So as $n\to\infty$ and $\mu\to0$
we have $a_n\to0$ and therefore $\pi(f_n)\to0$.
\end{proof}

\begin{corollary} \label{cor:pi_holo}
The map $\pi \colon R_3\to\complex$ is holomorphic.
\end{corollary}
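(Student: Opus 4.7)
The plan is to combine the two immediately preceding results with a standard removable-singularities argument. Lemma~\ref{lemma:pi_holo} tells us that $\pi$ is holomorphic on the open stratum $R_3^O$, and Lemma~\ref{lemma:pi_conts} tells us that $\pi$ is continuous on all of $R_3$. The only points at which holomorphicity is not yet established are those of the null fibre $N = R_3 \setminus R_3^O$.

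The key observation is that $N$ is a proper analytic subvariety of $R_3$: as noted in Remark~\ref{remark:null_codim}, in local coordinates it is cut out by the discriminant of the numerator of $f'$, hence is locally the zero set of a holomorphic function and has codimension~$1$. So the situation is exactly that handled by the Riemann second extension theorem (also known as Riemann's removable singularities theorem for analytic subsets): a continuous function on a complex manifold that is holomorphic on the complement of a nowhere-dense analytic subset extends holomorphically across that subset, and since we already have a continuous extension, that extension itself must be holomorphic.

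Applying this theorem with $X = R_3$ and the exceptional set $N$ therefore yields that $\pi$ is holomorphic on all of $R_3$. No obstacle is expected, since both hypotheses are supplied by the preceding two lemmas; the proof is a direct citation. (One could alternatively argue hands-on by noting that any point $f_0 \in N$ has a Stein neighbourhood $U$ in $R_3$ on which $N \cap U$ is the zero set of a single holomorphic function, and then invoking the one-variable Riemann removable singularity theorem slice by slice after restricting $\pi$ to holomorphic discs transverse to $N$; but the global statement of Riemann's second extension theorem makes this unnecessary.)
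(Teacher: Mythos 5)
Your proposal is correct and follows exactly the paper's own argument: combine Lemma~\ref{lemma:pi_holo} (holomorphicity on $R_3^O$), Lemma~\ref{lemma:pi_conts} (continuity everywhere), and Remark~\ref{remark:null_codim} (the null fibre is a proper analytic subvariety of codimension~1), then apply Riemann's removable singularity theorem. No discrepancies to report.
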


\begin{proof}
We already know that
$\pi$ is holomorphic outside the null fibre (Lemma~\ref{lemma:pi_holo}).
Recall (Remark~\ref{remark:null_codim})
that the null fibre has codimension~1.
Since $\pi$ is continuous at the null fibre,
it follows from Riemann's removable singularity theorem
that $\pi$ is holomorphic everywhere.
\end{proof}

\begin{remark} \label{remark:r3quotient}
Lemma~\ref{lemma:same_pi}
tells us that $\pi$ is constant on the orbits
and distinguishes the closed orbits,
and Lemma~\ref{lemma:image_of_pi} tells us that $\pi$ is surjective.
Informally, this means that
$\pi$ does the best possible job of distinguishing
the orbits---it is not possible for a holomorphic function
(or even a continuous function) to distinguish
the two orbits of the null fibre, since one is inside the closure
of the other---and so $\pi$ by itself forms a complete set of invariant functions
for the group action.
Therefore $\pi$ is the categorical quotient map,
proving Theorem~\ref{thm:r3quotient}.

This argument can be expressed more rigorously.
We know that the categorical quotient map $\pi' \colon R_3\to Y$
exists~\cite[page~70]{Snow-1982}.
Furthermore, $Y$ is a reduced complex space
and inherits from $R_3$
the properties of being connected, irreducible and normal~\cite[page~84]{Snow-1982}.
By the universal property of the quotient~\cite[Lemma~3.1]{Snow-1982},
there exists a unique holomorphic map $\alpha \colon Y\to\complex$
such that $\pi=\alpha\circ\pi'$.
From the above mentioned properties of $\pi$,
it follows that $\alpha$ is a bijection.
The difficulty is that $Y$ may have singularities,
so we cannot assume immediately that $\alpha^{-1}$ is holomorphic.

In our case, however,
$Y$ is irreducible and reduced, and therefore pure-dimensional.
Since there exists a holomorphic bijection $Y\to\complex$, the dimension must be~$1$.
But a $1$-dimensional normal space is necessarily smooth.
It follows that $\alpha$ is a biholomorphism,
and so $\pi \colon R_3\to\complex$ is also a categorical quotient map.
This completes the proof of Theorem~\ref{thm:r3quotient}.
\end{remark}

\subsection{Further structure: stabilisers and the exceptional orbit} \label{subs:stabilisers}

From the point of view of Oka theory,
we would like to know whether $\pi$
is an Oka map, in the sense defined in \cite[Definition~6.3]{Forstneric-Larusson-2011}.
A necessary condition is that $\pi$ should be a topological fibration.
In fact it is not.
This can be seen by studying the stabilisers of elements of $R_3$
and applying the results of \cite[Section~2.3]{Rainer-2009}.

Recall that if the cross-ratio of four points in some order
is $e^{\pm\pi i/3}$,
then the six cross-ratios listed in \eqref{eq:sixXratios}
take on only the two distinct values $e^{\pm\pi i/3}$.

\begin{definition}\label{def:exceptional_orbit}
The \emph{exceptional orbit}
is the set of elements of the open stratum of $R_3$
whose critical points, taken in some order,
have a cross-ratio of $e^{\pm\pi i/3}$.
\end{definition}

\begin{remark} \label{remark:exceptional_orbit}
The exceptional orbit is in fact a single orbit of the group action.
Straightforward calculations show that this orbit is
$\pi^{-1}(-27/4)$:
we can use \eqref{eq:a_mu_quadratic}
to find $a=-e^{\pm\pi i/3}-1$,
and for both choices of sign, \eqref{eq:pi} gives
the value $-27/4$.
Note also that $e^{\pm\pi i/3}$
are the special cross-ratio values referred to in Remark~\ref{remark:special_value}.
\end{remark}

\begin{theorem}\label{thm:stabilisers}
The stabilisers of elements of $R_3$ are of four types:
\begin{enumerate}[itemindent=-.1cm] % change the indent to fix overfull hboxes
  \item An element of the closed orbit in the null fibre has stabiliser of dimension~1.
        Hence this orbit is $5$-dimensional.
        The stabiliser is the nontrivial semidirect product of $\cstar$
        and $\integers_2$.
  \item An element of the non-closed orbit has finite stabiliser of size~2.
  \item An element of the exceptional orbit has finite stabiliser
          of size~12, isomorphic to the alternating group on four symbols.
  \item An element of the open stratum outside the exceptional orbit
          has finite stabiliser of size~4, isomorphic to the Klein 4-group.
          Furthermore, all such stabilisers are conjugate.
\end{enumerate}
\end{theorem}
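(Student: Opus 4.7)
The plan is to leverage the observation that if $(g_1,g_2)\in\mathrm{Stab}(f)$ then $g_2$ must permute the critical points of $f$ and $g_1$ the critical values (both preserving multiplicities), and that when at least three distinct critical points are present, $g_2$ (and similarly $g_1$) is determined by its action on them. I would treat the four orbit types separately, using convenient representatives drawn from Proposition~\ref{prop:orbits} and Lemma~\ref{lemma:standard_form}.

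For case~(1), I take $f(x)=x^3$, whose critical points and critical values both equal the double set $\{0,\infty\}$. Then $g_2$ preserves $\{0,\infty\}$ setwise, so $g_2(x)=cx$ or $g_2(x)=c/x$ with $c\in\cstar$; substituting into $g_1\circ f=f\circ g_2$ determines $g_1(y)=c^3 y$ or $g_1(y)=c^3/y$ respectively. Conjugating the identity component by the involution arising from $c=1$ in the ``swap'' component sends $c\mapsto 1/c$, so the stabiliser is the non-trivial semidirect product $\cstar\rtimes\integers_2$ of dimension~$1$, whence the orbit has dimension $6-1=5$. For case~(2), I take $f(x)=x^3+x^2$ with critical points $\{0,-2/3,\infty\}$ ($\infty$ double) and critical values $\{0,4/27,\infty\}$ ($\infty$ double); the unique double critical point and critical value are $\infty$ in each case, which $g_1$ and $g_2$ must fix, so both are affine. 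Permuting the two remaining simple points yields only the pair $g_2(x)=-2/3-x$, $g_1(y)=4/27-y$, verified via the identity $(-2/3-x)^3+(-2/3-x)^2=4/27-x^3-x^2$; the stabiliser thus has order~$2$.

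For cases~(3) and~(4), where the four critical points are distinct, the projection $\mathrm{Stab}(f)\to\Sym{4}$ given by the action on critical points is injective, and by the signature description in Lemma~\ref{lemma:signature} the image is exactly the set of $\tau\in\Sym{4}$ preserving both the critical-point cross-ratio $\mu$ and the critical-value cross-ratio $\lambda$ under the six-fold action of~\eqref{eq:sixXratios}. The Klein four-group $V_4$ (identity together with the three double transpositions) preserves every cross-ratio, so it is always contained in the image; its lift is exhibited by placing $f$ into the standard form $f_a$ and taking the unique Möbius involutions realising each double transposition on the two ordered 4-tuples of critical points and critical values. Substituting the explicit formulae from Lemma~\ref{lemma:standard_form} shows that when $\mu$ lies in the octahedral set $\{-1,1/2,2\}$ the induced $\lambda$ is generic (and vice versa), so in case~(4) the combined stabiliser of $(\mu,\lambda)$ is always $V_4$, giving $\mathrm{Stab}(f)\cong V_4$. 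Conjugacy of all such stabilisers follows from the fact that $V_4$ is unique up to conjugacy in $\PSL_2(\complex)$ and that the $\Sym{3}$ of outer automorphisms of $V_4$ is realised inside $G$ by normaliser conjugation. For the exceptional orbit of case~(3), Remark~\ref{remark:exceptional_orbit} supplies $a=-1-e^{\pm\pi i/3}$, and a short substitution into $\mu=-a(a+2)/(2a+3)$ and $\lambda=\mu^3/(a+2)^2$ shows that $\mu,\lambda\in\{e^{\pm\pi i/3}\}$, which are precisely the cross-ratio values whose stabiliser in $\Sym{4}$ enlarges from $V_4$ to the full tetrahedral group $A_4$; hence $\mathrm{Stab}(f)\cong A_4$, of order~$12$.

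The principal obstacle I expect is the lifting step in cases~(3) and~(4): one must verify that a candidate permutation-compatible pair $(g_1,g_2)$ actually satisfies $g_1\circ f=f\circ g_2$ identically on $\pone$ and not merely at the four critical points. I would handle this either by explicit computation in the standard form $f_a$, or by appealing to the rigidity given by the Riemann existence theorem (Theorem~\ref{thm:RET}): both $g_1\circ f$ and $f\circ g_2$ are degree-$3$ branched covers with identical critical points, critical values, and induced bijection between them, and such data together with the monodromy rigidly determines the cover up to a unique source automorphism which in the present setting must be the identity.
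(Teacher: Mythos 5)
Your proposal is correct and follows essentially the same route as the paper: the same representatives $x^3$ and $x^3+x^2$ for the null-fibre orbits, the same reduction of cases (3) and (4) to permutations of critical points/values compatible with the cross-ratio pair $(\mu,\lambda)$, the same check that the octahedral values $-1,\tfrac12,2$ of $\mu$ force a non-special $\lambda$, and the same conjugacy argument via the uniqueness of the Klein 4-subgroup of $\mbsgp$ up to conjugation together with the realisability of all permutations of its involutions. The only caveat is your proposed Riemann-existence-theorem shortcut for the lifting step: rigidity there only applies once you know the monodromy representations agree after transporting by $g_1$, which is not automatic and would itself require an argument, so the explicit computation in the standard form $f_a$ (which is what the paper does) remains the safer of your two options.
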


\begin{remark}
By \cite[Corollary 5.5]{Snow-1982},
the restriction of the quotient map
to the open stratum minus the exceptional orbit
is a holomorphic fibre bundle.
The fibres are homogeneous spaces for $G$,
therefore Oka manifolds,
and so $\pi$ restricted to this domain is an Oka map.
This is the maximal open subset of $R_3$ over which
$\pi$ is a fibration.
\end{remark}

\begin{proof}[Proof of theorem]
Let $f\in R_3$ and $g=(\alpha,\beta)\in G$
such that $f^g=f$.
Then $\alpha^{-1}\circ f\circ\beta=f$.
In particular, $f$ and $\alpha^{-1}\circ f\circ\beta$
have the same critical points
and the same critical values.
It follows that $\alpha$ permutes the critical values of $f$
and $\beta$ permutes the critical points of $f$.
Furthermore, both permutations must preserve multiplicities.

\textit{Case (1):} We can choose $f=x^3$ as a representative of the
$5$-dimensional orbit.
The critical points are $0$ and $\infty$, each with multiplicity~2,
and the critical values are the same.
The only Möbius transformations
fixing the set $\{0,\infty\}$
are $x\mapsto cx$ and $x\mapsto c/x$ for $x\in\cstar$.
Choosing $\alpha$ and $\beta$ to be of this form,
and adding the restriction that $\alpha^{-1}\circ f\circ\beta=f$,
we obtain an explicit realisation of the stabiliser as
$$\{(c^3x,cx):c\in\cstar\}\cup \{(c^3/x,c/x):c\in\cstar\}.$$

\textit{Case (2):} Similarly, take $f=x^3+x^2$
as a representative of the non-closed orbit.
This has a double critical value of $\infty$ with preimage $\infty$,
and finite critical points and values $0\mapsto0$ and $-2/3\mapsto 4/27$.
(The finite critical points are simply the zeros of the derivative
$3x^2+2x$.)

Suppose $\alpha^{-1}\circ f \circ\beta=f$.
Then $\alpha$ and $\beta$ must both fix $\infty$.
This means that they are both of the form $x\mapsto ax+b$
for some $a\in\cstar$ and $b\in\complex$.
Also, $\alpha$ must either fix or interchange
the points $0$ and $4/27$.
Thus $\alpha$ is either the identity or the map $x\mapsto 4/27-x$.
Similarly, $\beta$ is either the identity or $x\mapsto -2/3-x$.

If we set $\alpha(x)=4/27-x$ and $\beta(x)=-2/3-x$,
noting that $\alpha^{-1}=\alpha$,
then we can calculate:
\begin{align*}
  \alpha\circ f &=4/27-f\neq f, \\
  f \circ\beta &= (-2/3-x)^2(-2/3-x+1) \\
               &= (4/9+4x/3+x^2)(1/3-x) \\
               &= 4/27-x^2-x^3 \neq f,\\
  \alpha\circ f\circ\beta &=4/27-(4/27-x^2-x^3) =f.
\end{align*}
and so the stabiliser is $\{(1,1),(\alpha,\beta)\}$
which has size~2 as stated above.

\textit{Cases (3) and (4), descriptions of the stabilisers:}
For orbits of $R_3^O$,
we can choose a representative $f$
as described in Lemma~\ref{lemma:standard_form},
with distinct critical points
$0$, $1$, $\infty$ and~$\mu$
and distinct critical values
$0$, $1$, $\infty$ and~$\lambda$.

Step 1: If $\alpha^{-1}\circ f\circ \beta=f$,
then $\alpha$ must permute the four critical points of $f$,
and $\beta$ must permute the four critical values;
furthermore, $\beta$ must induce the same permutation as $\alpha$.
Since a Möbius transformation is determined by the images of three points,
this greatly restricts the possibilities for $(\alpha,\beta)$.
To be specific, we can choose an ordering $(z_1,z_2,z_3,z_4)$
of $(0,1,\infty,t)$ (where $t$ can stand for $\lambda$ or $\mu$),
find the unique Möbius transformation $g$ sending $(0,1,\infty)$
to $(z_1,z_2,z_3)$, and check whether $g(t)=z_4$.
The 24 possibilities are listed in Appendix~\ref{appendix:table}.

For generic values of $t$,
there are only four permutations,
given by the rows of the table
with ``any'' in the fourth column.
For each permutation
we can calculate $\alpha^{-1}\circ f\circ \beta=f$ explicitly
and verify that $(\alpha,\beta)$ does indeed stabilise $f$.
The nontrivial permutations are all pairs of transpositions,
giving the Klein 4-group.
This proves case~(4).

We obtain additional permutations only when $\lambda$
and $\mu$ are both special cross-ratio values,
i.e.\ one of $-1$, $\tfrac{1}{2}$, 2 or~$e^{\pm\pi i/3}$.

Step 2: If either $\mu$ or $\lambda$ is not one of the above special values,
then the only possible elements of the stabiliser are
those identified in Step~1 above.
So we need to check whether $\mu$ and $\lambda$ can be simultaneously special.
This is straightforward:
for each value of $\mu$ we solve \eqref{eq:a_mu_quadratic} above
to find the corresponding values of $a$, and then calculate $\lambda$.
The result is that if $\mu$ is one of $-1$, $\tfrac{1}{2}$ or $2$,
then $\lambda$ is real and irrational, therefore not special,
but if $\mu$ is $e^{\pm\pi i/3}$, then $\lambda=\bar\mu$ is special.
In this case there are an additional eight candidate elements in the stabiliser.

The calculations for the special values of $\lambda$ and $\mu$
are summarised in the following table.

\bigskip
{\tiny
\begin{tabular}{c|c|c|c}
$\mu$ & equation & $a$ & $\lambda=\mu^3/(a+2)^2$ \\
\hline
$-1$ & $a^2-3=0 $& $\pm\sqrt 3$ & real and irrational \\
\hline
$1/2$ & $a^2+3a+3/2 =0$ & $(-3\pm\sqrt 3)/2$ & real and irrational \\
\hline
$2$ & $a^2+6a+6=0$ & $-3\pm\sqrt 3$ & real and irrational \\
\hline
$e^{\pi i/3}$ & $a^2+2(e^{\pi i/3}+1)+3e^{\pi i/3}=0$ & $-e^{\pi i/3}-1$ & $1-e^{\pi i/3}=e^{-\pi i/3}$ \\
\hline
$e^{-\pi i/3}$ & $a^2+2(e^{-\pi i/3}+1)+3e^{-\pi i/3}=0$ & $-e^{-\pi i/3}-1$ & $1-e^{-\pi i/3}=e^{\pi i/3}$
\end{tabular}
} % end "tiny"
\bigskip

Step 3: For each of the candidate elements identified above,
calculate $\alpha^{-1}\circ f \circ \beta$ and verify that it equals $f$.
(In fact, knowing that the stabiliser is a group,
we only need to verify this for one element outside the generic stabiliser.
It is easiest to work with the permutation $(0 1 \infty)$,
for which $\alpha(x)=\beta(x)=1/(1-x)$ and $\alpha^{-1}(x)=(x-1)/x$.)
Hence the stabiliser of the exceptional orbit
has size~$12$.
From the table in Appendix~\ref{appendix:table}
we see that all elements of this stabiliser
induce even permutations on the set
$\{0,1,\infty,e^{\pi i/3}\}$,
and so the stabiliser is
isomorphic to the alternating group on four symbols.

\textit{Case (4), conjugacy of stabilisers:}
Every finite subgroup of the Möbius group
is conjugate to a subgroup of the group
$\PSU_2(\complex)$, which can be viewed as the group of rigid motions
of the Riemann sphere with respect to the usual embedding into $\reals^3$.
See for example \cite{Lyndon-Ullman-1967} or \cite[Section~2.13]{Jones-Singerman-1987}.

Two finite subgroups
of the Möbius group are conjugate if and only if they are isomorphic as abstract groups
(\cite[remarks after Corollary~2.13.7]{Jones-Singerman-1987}).
However, a slightly stronger result is needed for our purposes.

For a Klein 4-subgroup of $\PSU_2(\complex)$,
viewed as a group of rigid motions of the sphere,
each non-identity element is a rotation by an angle of $\pi$ about some axis.
It is clear that two rotations commute if and only if their axes are orthogonal.
Thus Klein 4-groups correspond to sets of three mutually orthogonal axes.
For any two such sets of axes,
there is an orientation-preserving rigid motion of the sphere taking one to the other.
This gives a group element conjugating one Klein 4-group to the other.
Therefore any two such subgroups are conjugate.

We can go a little further.  For a set of three mutually orthogonal axes,
and for any permutation of those axes,
there exists a rotation realising that permutation.
Conjugating by this rotation will yield an automorphism of the corresponding
Klein 4-group which permutes the non-identity elements in the same way.

Hence we can conclude that
given Klein 4-subgroups $\{1,\alpha_1,\alpha_2,\alpha_3\}$ and $\{1,\beta_1,\beta_2,\beta_3\}$
of the Möbius group,
there exists a group element $g$ with $g\alpha_jg^{-1}=\beta_j$ for $j=1,2,3$.
This is the stronger result referred to above.

Now we apply this to stabilisers in
$\mbsgp\times\mbsgp$.
If $f$ is in the open stratum but not in the exceptional orbit,
then the stabiliser is of the form
$\{(1,1),(\alpha_1,\beta_1),(\alpha_2,\beta_2),(\alpha_3,\beta_3)\}$,
where each $\alpha_j$ permutes the critical values of $f$
via a pair of disjoint transpositions,
and each $\beta_j$ carries out the same permutation on the corresponding
critical points.
Given another stabiliser of the form
$\{(1,1),(\alpha'_1,\beta'_1),(\alpha'_2,\beta'_2),(\alpha'_3,\beta'_3)\}$,
we seek $(g,h)$ such that
$g$ conjugates $\{1,\alpha_1,\alpha_2,\alpha_3\}$
to $\{1,\alpha'_1,\alpha'_2,\alpha'_3\}$ in some order, and
$h$ conjugates $\{1,\beta_1,\beta_2,\beta_3\}$
to $\{1,\beta'_1,\beta'_2,\beta'_3\}$ in the same order.
The fact that any two isomorphic finite subgroups of $\mbsgp$ are conjugate
tells us that a suitable $g$ exists.
Then the existence of $h$ is guaranteed by the stronger result
that we can conjugate the elements of one Klein 4-subgroup to another
in any desired order.
\end{proof}

\section{Degree 3: dominability and $\complex$-connectedness} \label{section:dominating}

\subsection{Composition of dominability} \label{subs:composition}

When exploring the Oka property or related flexibility properties of a manifold $X$,
a logical first step is to investigate holomorphic maps $\cn\to X$,
and in particular to look for dominating maps (Definition~\ref{def:dominable}).

In the case of $R_d$, the principal difficulty in constructing
explicit dominating maps is cancellation.
The easiest way to write down a map $\cn\to R_d$
is in the form $p(t)/q(t)$ where $p$ and $q$
are families of polynomials parametrised by $t\in\cn$.
However, it is necessary to ensure that $p$ and $q$
do not have common factors as the parameter $t$ varies.
We can achieve this by 
embedding $R_d$ in a larger space and then
applying the following result.

\begin{proposition}[Composition of dominability] \label{prop:composition}
Let $X$ be an open subset of a complex manifold $Z$,
and $p\in X$.
Suppose $\phi \colon \cn\to Z$ dominates $Z$ at $p$.
If $\phi^{-1}(X)$ is dominable at 0, then $X$ is dominable at $p$.
\end{proposition}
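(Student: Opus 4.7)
The proof is essentially an exercise in chain-rule composition, so my plan is to simply string the two dominating maps together.

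First, unpack the hypothesis. Since $X$ is open in $Z$, the preimage $U := \phi^{-1}(X)$ is an open subset of $\cn$ containing $0$, and inherits the structure of a complex manifold. The assumption that $U$ is dominable at $0$ means (Definition~\ref{def:dominable}) that there exist an integer $m$ and a holomorphic map $\psi \colon \complex^m \to U$ with $\psi(0) = 0$ and $d\psi_0 \colon T_0\complex^m \to T_0 U = \cn$ surjective.

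Next, form the composition $F := \phi \circ \psi \colon \complex^m \to X$. This is well defined as a map into $X$ because $\psi$ lands in $U = \phi^{-1}(X)$, and it is holomorphic as a composition of holomorphic maps. Clearly $F(0) = \phi(\psi(0)) = \phi(0) = p$.

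It remains to check that $dF_0$ is surjective onto $T_p X$. Since $X$ is open in $Z$, we have $T_p X = T_p Z$. By the chain rule,
\[
  dF_0 = d\phi_0 \circ d\psi_0.
\]
By construction $d\psi_0$ surjects onto $\cn = T_0\cn$, and $d\phi_0 \colon T_0 \cn \to T_p Z$ is surjective by hypothesis. Thus $dF_0$ is surjective, so $F$ dominates $X$ at $p$, which is the desired conclusion.

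The statement has no real obstacle: the point of the proposition is conceptual rather than technical, namely that to dominate a point in the open set $X \subset Z$ it suffices to dominate the (possibly much smaller) preimage $\phi^{-1}(X)$ at the origin, using any dominating map into the ambient $Z$. This is precisely the flexibility that will be exploited in Section~\ref{subs:composition} to build explicit dominating maps into $R_3$ via its embedding in $\pseven$.
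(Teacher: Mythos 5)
Your proof is correct and is exactly the argument the paper gives (the paper's proof is the single sentence ``if $\psi$ dominates $\phi^{-1}(X)$ at $0$, then $\phi\circ\psi$ dominates $X$ at $p$''); you have merely spelled out the chain rule and the identification $T_pX=T_pZ$ coming from openness.
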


\begin{proof}
If $\psi \colon \complex^m\to \phi^{-1}(X)$ dominates $\phi^{-1}(X)$ at 0,
then $\phi\circ\psi$ dominates $X$ at $p$.
\end{proof}

We construct a map $\complex^8\to R_3$ as follows.

We can view a point of $\pseven$ as a formal rational function:
$$(a_0 \colon\! \cdots \colon\! a_7) \longleftrightarrow 
\frac{a_0x^3+a_1x^2+a_2x+a_3}{a_4x^3+a_5x^2+a_6x+a_7}.$$
We also have the group $G=\mbsgp\times\mbsgp$
acting on $R_3$ by pre- and post-composition.
This extends to an action of $G$ on $\pseven$:
we can compose a formal rational function with a Möbius transformation
to get a well-defined result.

Recall that we can embed $R_3$ into $\pseven$
by using the coefficients of a rational function
as the homogeneous coordinates of a point of $\pseven$.
This embedding is $G$-equivariant;
in the following discussion we will identify $R_3$ with its image in $\pseven$.

Now choose $f\in R_3$
and suppose we have a map $\eta \colon \complex^2\to\pseven$
sending 0 to $f$.
Let $\exp \colon \complex^6\to G$ be the exponential map.
This map dominates $G$ at the identity;
for this particular group, it is also surjective~\cite[page~47]{Gorbatsevich-et-al-1997}.
Define $\phi \colon \complex^8\to\pseven$ by
\begin{equation}
  \phi(s,t)=\eta(s)^{\exp(t)},\qquad s\in\complex^2,\> t\in\complex^6.
  \label{eq:phi}
\end{equation}
Our strategy is to choose $\eta$ so that $\phi$ dominates $\pseven$ at $f$,
and find some $\psi \colon \complex^m\to\phi^{-1}(R_3)$ which is dominating at 0.
The proposition then tells us that $\phi\circ\psi$ dominates $R_3$ at $f$.
This proves Theorem~\ref{thm:r3dominable}.

Furthermore, $\psi$ can be chosen so that
$\phi\circ\psi$ is surjective (Corollary~\ref{cor:comp_surjective}).
Since $\complex^8$ is $\complex$-connected (Definition~\ref{def:connected}),
it follows that $R_3$ is $\complex$-connected,
proving Theorem~\ref{thm:r3connected}.

To construct a suitable $\eta$,
first we define $\eta_0 \colon \complex^2\to\pseven$ by
\begin{equation}
  \eta_0(a,b)=\frac{x^3-ax}{-bx^2+1}
  =(1 \colon\! 0 \colon\!\! -a \colon\! 0 \colon\!
    0 \colon\!\! -b \colon\! 0 \colon\! 1). \label{eq:eta}
\end{equation}
We will see in Proposition~\ref{prop:all_orbits}
that the image of $\eta_0$ intersects every orbit
of $G$ on $R_3$.
Thus given $f\in R_3$ there exist
$a_0,b_0\in\complex$ and $g\in G$
such that $g$ takes $\eta_0(a_0,b_0)$ to $f$.
Define $\eta$ by
\begin{equation*}
  \eta(a,b)=\eta_0(a+a_0,b+b_0)^g.
\end{equation*}

\begin{remark}
The form of $\eta$ is not uniquely determined by the choice of $f$.
For the purpose of proving strong dominability,
this does not matter:
all we need is that given $f$
there exists at least one suitable $\eta$.
If we could in fact find a canonical $\eta$ for each $f$,
in such a way that the map $f\mapsto\eta$ were holomorphic,
then we could join the resulting dominating maps
to make a spray (\cite[Definition~5.1]{Forstneric-Larusson-2011}).
This would imply that $R_3$ is Oka.
\end{remark}

The numerator of $\eta_0$ has roots $0$ and $\pm\sqrt{a}$,
and the denominator has roots $\pm1/\sqrt{b}$.
Therefore $\eta_0(a,b)$ fails to be in $R_3$ exactly when $ab=1$.
Similarly, given $a_0$, $b_0$ and $g$,
the set of $(a,b)$ such that $\eta(a,b)\not\in R_3$
is a translate of $\complex^2\setminus\{(a,b):ab=1\}$.
Therefore
$$\phi^{-1}(R_3)\cong(\complex^2\setminus\{(a,b):ab=1\})\times\complex^6.$$
Now $\complex^2\setminus\{ab=1\}$ is Oka:
this is a consequence of \cite[Proposition~4.10]{Hanysz-2012},
or see Appendix~\ref{appendix:conic} for an elementary proof.
In particular, $\complex^2\setminus\{ab=1\}$,
and hence $\phi^{-1}(R_3)$,
is dominable at $0$.
Thus $\phi^{-1}(R_3)$ is dominable at $0$.
We will show in Section~\ref{subs:transverse}
that $\phi$ dominates $\pseven$ at $f$.
Therefore $\phi\circ\psi$ dominates $R_3$ at~$f$.

\subsection{Proof of surjectivity} \label{subs:surjective}

The goal of this section
is to show that
the map $\psi \colon \complex^8\to\phi^{-1}(R_3)$
can be chosen so that $\phi\circ\psi$ is surjective.
First we prove that the image of the map $\eta_0$
defined by \eqref{eq:eta}
intersects every orbit,
and therefore $\phi$ is surjective.
Then we will describe the choice of $\psi$.

For the first part, we exploit the fact
that the critical values of $\eta_0(a,b)$
have a certain kind of symmetry.

\begin{definition} \label{def:balanced}
Let $z_1,\ldots,z_4\in\complex$.
We say that $(z_1,\ldots,z_4)$ is \emph{balanced}
if $z_1+z_2=z_3+z_4=0$.
\end{definition}

\begin{lemma} \label{lemma:balanced}
  Let $z_1,\ldots,z_4$ be distinct points of $\complex$.
  There exists a Möbius transformation $\alpha$
  such that $(\alpha(z_1),\ldots,\alpha(z_4))$
  is balanced.
\end{lemma}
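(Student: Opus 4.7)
The key observation is that a 4-tuple $(w_1,w_2,w_3,w_4)$ of distinct points is balanced precisely when the Möbius involution $\iota_0 \colon w\mapsto -w$ (whose fixed points are $0$ and $\infty$) swaps both pairs $\{w_1,w_2\}$ and $\{w_3,w_4\}$. Thus the lemma is equivalent to the statement that any two disjoint pairs of distinct points of $\pone$ admit a common Möbius involution. Given such an involution $\iota$, any Möbius map $\alpha$ sending the two fixed points of $\iota$ to $0$ and $\infty$ conjugates $\iota$ to $\iota_0$, and this $\alpha$ balances the 4-tuple.

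To prove existence constructively, I would use the triple transitivity of $\mbsgp$ to reduce to the normalized case $z_1=0$, $z_2=\infty$, $z_3=1$, $z_4=\lambda$ with $\lambda\in\complex\setminus\{0,1\}$. Writing $\alpha(z)=(az+b)/(cz+d)$ with $ad-bc=1$, the first balancing condition $\alpha(0)+\alpha(\infty)=0$ becomes $ad+bc=0$, and the second condition $\alpha(1)+\alpha(\lambda)=0$ simplifies---after using the first to cancel a $(1+\lambda)$-term---to $ac\lambda+bd=0$. The resulting system is elementary to solve: up to the one-parameter family of $\alpha$ stabilizing the pair $\{0,\infty\}$, solutions are parametrized by a square root of $\lambda$, and an explicit $\alpha$ can be written down.

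There is no essential obstacle; the substantive content is the observation linking ``balanced'' to an involution, after which the construction reduces to a short calculation. The only point worth flagging is that the excluded values $\lambda\in\{0,1,\infty\}$ correspond precisely to two of the $z_i$ coinciding, so the hypothesis that the four points are distinct is exactly what guarantees solvability of the reduced system.
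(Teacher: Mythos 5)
Your proof is correct, and it takes a genuinely different (and in part more conceptual) route than the paper's. The paper first uses transitivity to place the second pair at $\{1,-1\}$, so that one balancing condition holds automatically, and then seeks $\alpha$ fixing $1$ and $-1$ pointwise; such $\alpha$ has the form $(ax+b)/(bx+a)$, and the remaining condition $\alpha(z_1)+\alpha(z_2)=0$ becomes the single quadratic $(z_1+z_2)A^2+2(1+z_1z_2)A+(z_1+z_2)=0$ in $A=a/b$, with distinctness of the points ruling out the degenerate roots $A=\pm1$. You instead normalize to $(0,\infty,1,\lambda)$ and solve the two conditions $ad+bc=0$ and $ac\lambda+bd=0$ simultaneously; this checks out (with $ad-bc=1$ one gets $ad=1/2$, $bc=-1/2$, then $bd=\pm\sqrt{\lambda}/2$ and $ac=\mp 1/(2\sqrt{\lambda})$, solvable precisely when $\lambda\notin\{0,1,\infty\}$, i.e.\ precisely when the four points are distinct). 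Your opening observation --- that balancedness means the involution $w\mapsto -w$ interchanges each pair, so the lemma amounts to finding a common Möbius involution swapping $\{z_1,z_2\}$ and $\{z_3,z_4\}$ --- could in fact replace the computation entirely: the unique Möbius map determined by $z_1\mapsto z_2$, $z_2\mapsto z_1$, $z_3\mapsto z_4$ is automatically an involution (normalize the first pair to $\{0,\infty\}$ to see it has the form $w\mapsto c/w$) and hence also sends $z_4$ back to $z_3$. That framing has the further advantage of making it automatic that all four images land in $\complex$ rather than at $\infty$, since none of the $z_i$ is a fixed point of the involution and only the fixed points are sent to $0$ and $\infty$ --- a small point left implicit both in your computational variant and in the paper's proof.
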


\begin{proof}
By transitivity of the Möbius group,
we can assume that $z_3=1$ and $z_4=-1$.
We will find a Möbius transformation
$\alpha$ fixing $1$ and $-1$,
and such that $\alpha(z_1)+\alpha(z_2)=0$.
Suppose
$$\alpha(x)=\frac{ax+b}{cx+d}.$$
Then $\alpha(1)=1$ tells us that $a+b=c+d$,
and $\alpha(-1)=-1$ implies $a-b=c-d$.
Hence $a=d$ and $b=c$, so $\alpha$ is of the form
$$\alpha(x)=\frac{ax+b}{bx+a}$$
for some $a,b\in\complex$.
For $\alpha$ to be invertible
we also need $a\neq\pm b$.

We wish to find $a$ and $b$ such that
$$\frac{az_1+b}{bz_1+a}+\frac{az_2+b}{bz_2+a}=0.$$
This gives
$$(z_1+z_2)a^2+2(1+z_1z_2)ab+(z_1+z_2)b^2=0,$$
or, setting $A=a/b$ or $A=b/a$
(by symmetry, both are possible),
$$(z_1+z_2)A^2+2(1+z_1z_2)A+(z_1+z_2)=0.$$
This always has a solution for $A$.
The condition $a\neq\pm b$ means that we require $A\neq\pm 1$.
But if $A=1$, then the left hand side of the equation is
$$2(z_1+z_2+1+z_1z_2)=2(z_1+1)(z_2+1),$$
which is always nonzero when $z_1,z_2,\pm1$ are distinct.
Similarly, $A=-1$ will also give a nonzero left hand side.
Hence it is always possible to find $a$ and $b$
satisfying the required conditions.
\end{proof}

The proof of the following elementary result
is left as an exercise for the reader.

\begin{lemma} \label{lemma:odd}
Let $U\subset\complex$ be a connected open set containing 0
and such that $-x\in U$ for every $x\in U$.
Let $f \colon U\to\complex$ be a holomorphic function
such that $f'$ is even and $f(0)=0$.
Then $f$ is odd.
\end{lemma}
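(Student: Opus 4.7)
The plan is to reduce the problem to showing that an auxiliary function is identically zero, using the fundamental fact that a holomorphic function with vanishing derivative on a connected open set is constant.

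First I would define $g \colon U \to \complex$ by $g(x) = f(x) + f(-x)$; note that this makes sense precisely because $U$ is symmetric about $0$. The function $g$ is clearly holomorphic on $U$, and $g(0) = 2f(0) = 0$. Differentiating gives
\begin{equation*}
  g'(x) = f'(x) - f'(-x),
\end{equation*}
and by hypothesis $f'$ is even, so $g'(x) = 0$ for all $x \in U$.

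Since $U$ is connected and $g'$ vanishes identically, $g$ is constant on $U$. The value $g(0) = 0$ then forces $g \equiv 0$, which is exactly the statement $f(-x) = -f(x)$, i.e.\ $f$ is odd.

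The only subtlety to check is that the chain rule step $\tfrac{d}{dx} f(-x) = -f'(-x)$ is justified, which it is because $U$ is invariant under $x \mapsto -x$ and $f$ is holomorphic there; beyond this, the argument is entirely routine. There is no real obstacle, which is presumably why the lemma is stated as an exercise in the paper.
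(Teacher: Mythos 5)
Your proof is correct, and since the paper leaves this lemma as an exercise, your argument (differentiate $g(x)=f(x)+f(-x)$, use evenness of $f'$ and connectedness of $U$ to conclude $g\equiv g(0)=0$) is exactly the canonical one the author intended. No gaps.
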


\begin{lemma} \label{lemma:odd_over_even}
Let $f=p/q\in R_d$
where $p$ and $q$ are polynomials with no common factors.
Suppose $f$ is an odd function.
Then either $p$ is odd and $q$ is even,
or $p$ is even and $q$ is odd.
\end{lemma}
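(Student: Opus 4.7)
The plan is to convert the oddness of $f$ into a polynomial identity and then exploit uniqueness of the reduced form of a rational function.

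First I would rewrite the hypothesis $f(-x)=-f(x)$ as the polynomial identity
\begin{equation*}
  p(-x)\,q(x) \;+\; p(x)\,q(-x) \;=\; 0,
\end{equation*}
or equivalently, as equality of rational functions,
\begin{equation*}
  \frac{p(x)}{q(x)} \;=\; -\,\frac{p(-x)}{q(-x)}.
\end{equation*}
The key observation is that both sides are in lowest terms: the coprimality of $p$ and $q$ implies coprimality of $p(-x)$ and $q(-x)$, since the substitution $x\mapsto -x$ is a ring automorphism of $\complex[x]$.

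Next I would use the standard fact that two rational functions written in lowest terms are equal iff their numerators and denominators are proportional by the same nonzero constant. Applied here, this yields a $c\in\cstar$ such that
\begin{equation*}
  p(-x) \;=\; -c\,p(x), \qquad q(-x) \;=\; c\,q(x).
\end{equation*}
Substituting $x\mapsto -x$ once more gives $p(x)=c^2 p(x)$, so $c^2=1$ and hence $c=\pm 1$. In the case $c=1$ we read off that $p$ is odd and $q$ is even; in the case $c=-1$, $p$ is even and $q$ is odd. This gives the desired dichotomy.

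There is no serious obstacle here; the only point that deserves care is the justification that reduced form is unique up to a common scalar, which is a standard consequence of unique factorization in $\complex[x]$. An alternative route would be to analyse the multiplicities of roots: the identity forces the nonzero roots of $p$ (respectively $q$) to come in $\pm$ pairs of equal multiplicity, so $p(x)=c\,x^a\,\prod_i(x^2-\alpha_i^2)^{k_i}$ and similarly for $q$, which makes each of $p$ and $q$ automatically even or odd according to the parity of $a$ and $b$; then $f$ odd forces $a-b$ to be odd. I would favour the reduced-form approach, however, as it avoids tracking multiplicities and the separate treatment of the root at $0$.
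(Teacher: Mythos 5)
Your proof is correct, and it takes a genuinely different route from the paper's. The paper argues via root symmetry: since $p$ and $q$ are coprime, the zeros of $p$ are exactly the zeros of $f$ in $\complex$; oddness of $f$ distributes them symmetrically about the origin, so $p$ (and likewise $q$) is even or odd according to whether $f(0)$ is $0$ or $\infty$ --- essentially the ``alternative route'' you sketch at the end. Your main argument instead uses uniqueness of the reduced form: from $p(x)/q(x)=-p(-x)/q(-x)$, with both sides in lowest terms because $x\mapsto -x$ is a ring automorphism of $\complex[x]$, you extract a single constant $c\in\cstar$ with $p(-x)=-c\,p(x)$ and $q(-x)=c\,q(x)$, and the relation $c^2=1$ forces exactly the two stated cases. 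A small advantage of your version is that the same constant $c$ governs both $p$ and $q$, so the conclusion that their parities are \emph{opposite} falls out automatically; the root-symmetry argument only establishes that each of $p$ and $q$ is even or odd, and one must still note that equal parities would make $f$ even rather than odd. Your approach also sidesteps the bookkeeping of multiplicities and the separate treatment of the root at $0$, at the modest cost of invoking uniqueness of the reduced fraction, which is a standard consequence of unique factorization in $\complex[x]$.
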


\begin{proof}
Since $p$ and $q$ have no common factors,
the zeros of $p$ are precisely the zeros of $f$ in $\complex$.
Since $f$ is odd, the zeros are distributed
symmetrically about $0$,
and so $p$ is either odd or even
(depending on whether $f(0)=0$ or $\infty$).
Similarly, $q$ is either even or odd.
\end{proof}

\begin{corollary} \label{cor:odd_over_even}
If $f\in R_3$ is odd and $f(0)=0$,
then $f$ can be written in the form
$$f(x)=\frac{Ax^3+Bx}{Cx^2+1}$$
for some $A,B,C\in\complex$.
\end{corollary}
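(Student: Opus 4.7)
The plan is to apply Lemma~\ref{lemma:odd_over_even} directly and eliminate one of the two cases using the hypotheses $f(0)=0$ and $f\in R_3$. Write $f=p/q$ with $p,q$ coprime polynomials of maximum degree~$3$. The lemma gives two alternatives: either $p$ is odd and $q$ is even, or $p$ is even and $q$ is odd.

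First I would rule out the case where $p$ is even and $q$ is odd. An even polynomial of degree at most~$3$ has degree $0$ or~$2$, and an odd one has degree $1$ or~$3$. The condition $f\in R_3$ forces $\max(\deg p,\deg q)=3$, so $q$ must have degree~$3$, giving $q(x)=cx^3+dx$ with $c\neq 0$; in particular $q(0)=0$. The condition $f(0)=0$ together with coprimality requires $q(0)\neq 0$ (otherwise $x$ would divide both $p$ and $q$, since it would also divide $p$), which contradicts $q(0)=0$. So this case is impossible.

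Therefore $p$ is odd and $q$ is even. Then $p(x)=ax^3+bx$ and $q(x)=cx^2+d$ for some constants $a,b,c,d\in\complex$. Since $q$ cannot be identically zero and $q(0)=d$, coprimality with $p$ together with the fact that $p(0)=0$ forces $d\neq 0$ (if $d=0$ then $x^2\mid q$, and then $x\mid p$ would have to fail, but $p(0)=0$ means $x\mid p$, again a contradiction). Dividing numerator and denominator by~$d$ yields
\[
f(x)=\frac{(a/d)x^3+(b/d)x}{(c/d)x^2+1},
\]
which is the required form with $A=a/d$, $B=b/d$, $C=c/d$. There is no real obstacle here; the only subtlety is being careful with the coprimality argument that eliminates the second case and guarantees $d\neq 0$, so the main step is simply the case analysis produced by Lemma~\ref{lemma:odd_over_even}.
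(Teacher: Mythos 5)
Your proof is correct and is essentially the argument the paper intends: the corollary is stated without proof as an immediate consequence of Lemma~\ref{lemma:odd_over_even}, with $f(0)=0$ and coprimality selecting the case ``$p$ odd, $q$ even'' and forcing a nonzero constant term in $q$, which you then normalise to $1$. Your extra care with the coprimality details is sound and fills in exactly what the paper leaves implicit.
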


\begin{proposition} \label{prop:all_orbits}
The image of the map $\eta_0$
of \eqref{eq:eta}
intersects every orbit in $R_3$.
\end{proposition}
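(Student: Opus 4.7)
The plan is to show that every $G$-orbit in $R_3$ contains an odd function $\tilde f$ with $\tilde f(0)=0$. Once such a $\tilde f$ has been produced, Corollary~\ref{cor:odd_over_even} forces $\tilde f(x) = (Ax^3+Bx)/(Cx^2+1)$ with $A\neq 0$ (otherwise $\deg\tilde f<3$), and post-composing with $y\mapsto y/A$ then brings $\tilde f$ into the form $(x^3-ax)/(1-bx^2)=\eta_0(a,b)$ with $a=-B/A$ and $b=-C$. Thus the orbit meets the image of $\eta_0$.

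The existence of an odd representative comes from the stabilizer description in Theorem~\ref{thm:stabilisers}. In each orbit, $\mathrm{Stab}_G(f)$ contains at least one pair of non-identity Möbius involutions $(\sigma_1,\sigma_2)$: for orbits in $R_3^O$, the Klein~$4$-subgroup of the stabilizer supplies three such pairs (it sits inside the $A_4$ stabilizer in the exceptional case); for the non-closed null-fibre orbit, the explicit element $(\tfrac{4}{27}-y,\,-\tfrac{2}{3}-x)$ is such a pair; and for the closed null-fibre orbit, the involution pair $(-y,-x)$ lies in $\mathrm{Stab}(x^3)$. Observe that in each non-identity Klein-group element neither coordinate can be the identity, since a degree-$3$ branched cover of $\pone$ has no order-$2$ deck transformation.

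Given the pair $(\sigma_1,\sigma_2)$, I use the fact that every non-identity Möbius involution has two fixed points and is therefore conjugate in $\mbsgp$ to $\sigma\colon x\mapsto -x$. Choose $\gamma_1,\gamma_2\in\mbsgp$ with $\gamma_j\sigma_j\gamma_j^{-1}=\sigma$ and set $\tilde f := \gamma_1\circ f\circ\gamma_2^{-1}$. The identities $\gamma_2^{-1}\sigma=\sigma_2\gamma_2^{-1}$, the stabilizer relation $f\sigma_2=\sigma_1 f$, and $\gamma_1\sigma_1=\sigma\gamma_1$ yield in turn
\begin{equation*}
\tilde f(-x)
= \gamma_1 f(\gamma_2^{-1}\sigma x)
= \gamma_1 f(\sigma_2\gamma_2^{-1}x)
= \gamma_1\sigma_1 f(\gamma_2^{-1}x)
= \sigma\tilde f(x)
= -\tilde f(x),
\end{equation*}
so $\tilde f$ is odd. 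Oddness forces $\tilde f(0)\in\{0,\infty\}$; if $\tilde f(0)=\infty$, replace $\tilde f$ by $1/\tilde f$, which is also odd and now vanishes at $0$.

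The main obstacle is this chain of conjugation identities; the verification is mechanical but must be tracked carefully. For the null fibre one may instead observe directly that $x^3=\eta_0(0,0)$ lies in the closed orbit and $x^3-x=\eta_0(1,0)$, being an odd cubic polynomial with three distinct critical values, lies in the non-closed orbit, confining the substantive work to the open stratum.
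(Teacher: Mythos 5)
Your proof is correct, but it reaches the key intermediate step---an odd representative of each orbit vanishing at the origin---by a genuinely different route from the paper. The paper works directly with the critical points: Lemma~\ref{lemma:balanced} supplies a Möbius transformation making the critical points balanced, so that the numerator of the derivative becomes an even polynomial, and Lemma~\ref{lemma:odd} then upgrades this to oddness of the map itself after normalising the value at $0$; the finish via Corollary~\ref{cor:odd_over_even} is the same in both arguments. You instead extract from Theorem~\ref{thm:stabilisers} a stabiliser element $(\sigma_1,\sigma_2)$ whose two coordinates are non-identity involutions, conjugate both to $x\mapsto -x$, and read off oddness from the equivariance relation $f\circ\sigma_2=\sigma_1\circ f$ by the short formal computation you display. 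Your version makes transparent \emph{why} the odd normal form exists---it is forced by the involutive symmetry present in every stabiliser, and it exhibits the image of $\eta_0$ as a slice adapted to a fixed conjugacy class of involutions---and it bypasses the balancing lemma and the evenness-of-the-derivative step entirely. The cost is a dependence on Theorem~\ref{thm:stabilisers}, though only on the soft part of it (that each stabiliser contains a pair of non-identity involutions, which for the Klein 4-group elements follows because each induces a product of two disjoint transpositions on four distinct critical values or points); since that theorem is proved before Proposition~\ref{prop:all_orbits} and does not use it, there is no circularity. Your treatment of the null fibre, either via the involution $(-x,-x)$ stabilising $x^3$ or simply by exhibiting $\eta_0(0,0)=x^3$ and $\eta_0(1,0)=x^3-x$, agrees with the paper's.
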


\begin{proof}
First, note that $\eta_0(0,0)=x^3$ is in the small orbit
and $\eta_0(1,0)=x^3-x$ is in the non-closed orbit.
Therefore we only need to consider orbits outside the null fibre.

Given $f\in R_3$ outside the null fibre,
Lemma~\ref{lemma:balanced} ensures that there is a Möbius transformation $\alpha$
such that the critical points of $f\circ\alpha$ are balanced.
(In fact $\alpha$ is the inverse of a transformation
taking the critical points to a balanced quadruple.
If one of the critical points of $f$ is $\infty$,
then before applying the lemma we precompose $f$
with a suitable transformation so that the resulting critical points
are all finite.)

Now the finite critical points of $f$ are the zeros of $f'$, that is,
the roots of the numerator of $f'$.
For $f=p/q$ we have $f'=(p'q-pq')/p^2$.
After balancing the critical points,
the numerator of $(f\circ\alpha)'$ will be of the form
$C(x^2-A^2)(x^2-B^2)$ for some $A,B,C\in\cstar$.
The denominator is a perfect square.
Therefore $(f\circ\alpha)'$ is an even function.

Let $\beta$ be a Möbius transformation taking $f(\alpha(0))$ to 0,
so that $\beta\circ f\circ\alpha$ has the same critical points as $f\circ\alpha$.
Let
$U=\complex\setminus\{x\in\complex:
x\text{ or }{-x}\text{ is a pole of }\beta\circ f\circ\alpha\}.$
Then $\beta\circ f\circ\alpha|_U$ satisfies the conditions
of Lemma~\ref{lemma:odd}, and is therefore an odd function.
By continuity, it follows that any poles of $\beta\circ f\circ\alpha$
must be symmetrically distributed about 0,
and $\beta\circ f\circ\alpha$ is odd.
By Corollary~\ref{cor:odd_over_even} we have
$$(\beta\circ f\circ\alpha)(x)=\frac{Ax^3+Bx}{Cx^2+1}$$
for some $A,B,C\in\complex$.
Also, $f\in R_3$ implies $A\neq0$.
Therefore we can postcompose with the Möbius transformation
$x\mapsto x/A$
to see that $f$ is in the same orbit as $\eta_0(-B/A,-C)$.
\end{proof}

\begin{corollary} \label{cor:surjective}
The image of the map $\phi$ of \eqref{eq:phi} is exactly $R_3$.
\end{corollary}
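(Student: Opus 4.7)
The plan is a brief reduction to Proposition~\ref{prop:all_orbits} combined with the surjectivity of $\exp \colon \complex^6 \to G$, both of which are already available.

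First I would unfold the definition of $\phi$. Since $\eta$ was chosen to have the form $\eta(s) = \eta_0(s + (a_0,b_0))^{g_0}$ for fixed constants $a_0, b_0 \in \complex$ and $g_0 \in G$, we have
$$\phi(s,t) = \eta_0\bigl(s + (a_0,b_0)\bigr)^{g_0 \exp(t)}.$$
The translation $s \mapsto s + (a_0,b_0)$ is a bijection of $\complex^2$ onto itself, and left multiplication by $g_0$ is a bijection of $G$ onto itself; since $\exp \colon \complex^6 \to G$ is surjective (as recorded in the construction of $\phi$), as $(s,t)$ ranges over $\complex^8$ the pair $(s + (a_0,b_0),\, g_0\exp(t))$ covers all of $\complex^2 \times G$. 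Hence the image of $\phi$ coincides with the $G$-saturation of $\eta_0(\complex^2)$ inside $\pseven$.

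Next I would fix an arbitrary $f \in R_3$ and apply Proposition~\ref{prop:all_orbits} to obtain a point $(a,b) \in \complex^2$ such that $\eta_0(a,b)$ lies in the $G$-orbit of $f$. Picking $h \in G$ with $\eta_0(a,b)^h = f$, setting $s = (a - a_0,\, b - b_0)$, and choosing $t \in \complex^6$ with $\exp(t) = g_0^{-1} h$, we find $\phi(s,t) = \eta_0(a,b)^h = f$. This shows $R_3 \subseteq \phi(\complex^8)$, which is the content actually needed in the subsequent proof of Theorem~\ref{thm:r3connected}; the reverse inclusion comes from restricting to $\phi^{-1}(R_3) = (\complex^2 \setminus \{ab = 1\}) \times \complex^6$, on which the $G$-invariance of $R_3$ inside $\pseven$ guarantees that $\phi$ takes values in $R_3$.

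No real obstacle is anticipated. The entire statement is a bookkeeping consequence of Proposition~\ref{prop:all_orbits}, which does the substantive work of showing that $\eta_0$ hits every $G$-orbit in $R_3$; the role of the exponential map is only to promote an orbit representative into an arbitrary point of the orbit.
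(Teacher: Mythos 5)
Your argument is correct and is exactly the reasoning the paper leaves implicit: the corollary is stated without proof immediately after Proposition~\ref{prop:all_orbits}, and the intended justification is precisely your combination of that proposition with the surjectivity of $\exp$ and the bijectivity of the translation by $(a_0,b_0)$ and of left multiplication by $g_0$. You also rightly flag that the word ``exactly'' only holds after restricting to $\phi^{-1}(R_3)=(\complex^2\setminus\{ab=1\})\times\complex^6$, since $\eta_0(a,b)$ lands in the resultant locus of $\pseven$ when $ab=1$; the inclusion $R_3\subseteq\phi(\complex^8)$ is all that is used later.
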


Now we describe the map $\psi \colon \complex^8\to\phi^{-1}(R_3)$.
Since the exponential map $\exp \colon \complex^6\to G$ is surjective,
we simply need to find a surjective holomorphic map
$\chi \colon \complex^2\to\complex^2\setminus\{ab=1\}$
which is dominating at~$0$,
and then we can take $\psi=\chi\times\exp$.

Following Buzzard and Lu~\cite[page~645]{Buzzard-Lu-2000},
define a map $\omega \colon \complex^2\to\complex$ by
\begin{align*}
    \omega(x,y)&=\left\{
    \begin{array}{cl}
        \dfrac{e^{xy}-1}{x} & \textup{if }x\neq0 \label{eq:BLmap} \\
        y & \textup{if }x=0
    \end{array}
    \right. \\
    &= y+\frac{xy^2}{2}+\frac{x^2y^3}{3!}+\cdots.
\end{align*}
From the first form of the definition,
we can see that for fixed $x$,
the image of $y\mapsto\omega(x,y)$ is $\complex\setminus\{-1/x\}$.
From the series expression we can see that $\omega$ is holomorphic,
and we can calculate derivatives
\begin{align*}
    \frac{\partial\omega}{\partial x}\bigg\vert_{x=0} & = y^2/2, \\
    \frac{\partial\omega}{\partial y} &= e^{xy}.
\end{align*}

\begin{proposition} \label{prop:chi_surjective}
The map $\chi \colon \complex^2\to\complex^2\setminus\{ab=1\}$ defined by
$$\chi(x,y)=\left(x,\frac{1-e^{xy}}{x}\right)=(x,-\omega(x,y))$$
is surjective and dominating at~$0$.
\end{proposition}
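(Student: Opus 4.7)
The plan is to verify in turn that $\chi$ takes values in the target, is surjective, and has surjective differential at the origin. All three follow from elementary properties of the complex exponential together with the explicit form of $\omega$ that is already recorded in the paper.

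First I would check that $\chi(\complex^2) \subseteq \complex^2\setminus\{ab=1\}$. For $(x,y)\in\complex^2$ with $x\neq 0$, the product of the two coordinates of $\chi(x,y)$ equals $x\cdot(1-e^{xy})/x = 1-e^{xy}$, which is never $1$ since $e^{xy}\neq 0$. For $x=0$ the product is $0$. So the map really does land in the complement of the hypersurface $\{ab=1\}$.

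Next, for surjectivity, given a target point $(a,b)$ with $ab\neq 1$ I would split into cases on whether $a=0$. If $a=0$, then $\chi(0,-b)=(0,-\omega(0,-b))=(0,-(-b))=(0,b)$. If $a\neq 0$, then I need $y$ with $(1-e^{ay})/a = b$, i.e.\ $e^{ay}=1-ab$; since $ab\neq 1$ the right-hand side lies in $\complex^*$, and surjectivity of $\exp\colon\complex\to\complex^*$ gives such a $y$. Then $\chi(a,y)=(a,b)$.

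For domination at $0$, I would compute the Jacobian matrix directly from the partial derivatives already supplied in the paper. Since $\chi_1(x,y)=x$ and $\chi_2(x,y)=-\omega(x,y)$, the Jacobian at $(0,0)$ is
\begin{equation*}
  \begin{pmatrix} 1 & 0 \\ -\partial\omega/\partial x|_0 & -\partial\omega/\partial y|_0 \end{pmatrix}
  = \begin{pmatrix} 1 & 0 \\ 0 & -1 \end{pmatrix},
\end{equation*}
using $\partial\omega/\partial x|_{x=0}=y^2/2$ (which vanishes at $y=0$) and $\partial\omega/\partial y|_{(0,0)}=e^0=1$. This has determinant $-1$, so $d\chi_0$ is an isomorphism, and in particular surjective; hence $\chi$ is a dominating map at $0$.

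There is no serious obstacle here: the construction of $\omega$ has been arranged exactly so that fixing $x\neq 0$ gives a map $y\mapsto\omega(x,y)$ onto $\complex\setminus\{-1/x\}$, and this is precisely what is needed to hit the entire slice $\{x\}\times\complex\setminus\{(x,1/x)\}$ of the complement. The only subtlety is remembering to treat $x=0$ separately, both in the surjectivity argument and in recognising that the definition of $\omega$ extends holomorphically across this slice.
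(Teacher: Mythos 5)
Your proof is correct and follows essentially the same route as the paper: surjectivity via the fact that $y\mapsto\omega(a,y)$ omits only $-1/a$ (equivalently, surjectivity of $\exp\colon\complex\to\cstar$), and domination by computing the Jacobian at the origin. Your version is slightly more explicit about the $x=0$ case and about the map actually landing in $\complex^2\setminus\{ab=1\}$, but the substance is identical.
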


\begin{proof}
Recall that for fixed $a$, the image of
$y\mapsto\omega(a,y)$ is $\complex\setminus\{-1/a\}$.
Thus if $ab\neq1$, then there exists $y$ such that
$\omega(a,y)\neq -b$,
and then $\chi(a,y)=(a,b)$.
Hence $\chi$ is surjective.

To prove dominability, we need to verify that the
vectors $\partial\chi/\partial x$ and $\partial\chi/\partial y$
evaluated at $(x,y)=(0,0)$ span $\complex^2$.
But we have
$\partial\chi/\partial x\vert_{x=0}= (1,y^2/2)$
and $\partial\chi/\partial y=(0,e^{xy})$.
Thus the derivatives evaluated at $(0,0)$
are $(1,0)$ and $(0,1)$.
\end{proof}

\begin{corollary} \label{cor:comp_surjective}
With $\chi$ as above, $\psi=\chi\times\exp$
and $\phi$ as defined in \eqref{eq:phi},
the composition $\phi\circ\psi$ is surjective.
\end{corollary}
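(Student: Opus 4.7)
The plan is to chain together three surjectivities that are already in hand.

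First, the exponential map $\exp \colon \complex^6 \to G$ is surjective (as remarked in the construction just after \eqref{eq:phi}), and Proposition~\ref{prop:chi_surjective} supplies the surjectivity of $\chi \colon \complex^2 \to \complex^2 \setminus \{ab=1\}$. Taking the product then yields a surjection $\psi = \chi \times \exp \colon \complex^8 \to (\complex^2 \setminus \{ab=1\}) \times G$.

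Second, I would reduce the claim to the case in which the base-point data defining $\eta$ are trivial, namely $a_0 = b_0 = 0$ and $g = \mathrm{id}$, so that $\eta = \eta_0$: Theorem~\ref{thm:r3connected} asks only for the existence of a single surjective map $\complex^8 \to R_3$, so one such $\eta$ suffices. In this reduced setting Proposition~\ref{prop:all_orbits} says that the image of $\eta_0$ meets every $G$-orbit on $R_3$, and since $\eta_0(a,b)$ lies in $R_3$ precisely when $ab \ne 1$, one in fact has $G \cdot \eta_0(\complex^2 \setminus \{ab=1\}) = R_3$.

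Third, the surjectivity of $\phi \circ \psi$ follows immediately. Given $f \in R_3$, pick $(a,b) \in \complex^2 \setminus \{ab=1\}$ and $h \in G$ with $\eta_0(a,b)^h = f$; choose $(x,y) \in \complex^2$ with $\chi(x,y) = (a,b)$ and $t \in \complex^6$ with $\exp(t) = h$; then
\[
(\phi \circ \psi)(x, y, t) = \eta_0(\chi(x,y))^{\exp(t)} = \eta_0(a,b)^h = f.
\]
There is no real obstacle here: the substantive work has been done in Propositions~\ref{prop:all_orbits} and~\ref{prop:chi_surjective}, and this corollary is essentially a formal composition statement.
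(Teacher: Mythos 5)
Your argument is correct and follows the same route the paper intends: surjectivity of $\chi$ and $\exp$ gives surjectivity of $\psi$ onto (the relevant copy of) $(\complex^2\setminus\{ab=1\})\times G$, and Proposition~\ref{prop:all_orbits} gives $G\cdot\eta_0(\complex^2\setminus\{ab=1\})=R_3$, so the composition is onto. Your reduction to $a_0=b_0=0$, $g=\mathrm{id}$ is harmless and in fact matches the paper's own implicit identification of $\phi^{-1}(R_3)$ with $(\complex^2\setminus\{ab=1\})\times\complex^6$; for general base-point data one would simply translate $\chi$ accordingly.
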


\subsection{Proof of transversality} \label{subs:transverse}

We wish to show that the map $\phi$
of \eqref{eq:phi} is dominating.
Recall that $\phi \colon \complex^8\to\pseven$ is built
from maps $\eta \colon \complex^2\to\pseven$
and $\exp \colon \complex^6\to G$.
For convenience, we repeat the definitions here.
\begin{align*}
  \eta_0(a,b)&=\frac{x^3-ax}{-bx^2+1}
  =(1 \colon\! 0 \colon\!\! -a \colon\! 0 \colon\! 0 \colon\!\! -b \colon\! 0 \colon\! 1), \\
  \eta(a,b)&=\eta_0(a+a_0,b+b_0)^g\text{ for some constants }
             a_0,b_0\in\complex\text{ and }g\in G, \\
  \phi(s,t)&=\eta(s)^{\exp(t)},\qquad s\in\complex^2,\> t\in\complex^6. \\
\end{align*}

Since $\exp$ is dominating at the identity,
it follows that the image of $d\phi_0$ contains the tangent space
to the fibre of $\eta(0,0)$.
So to prove that $\phi$ is dominating,
it is sufficient to show that the image of $d\eta$ is transverse to the
tangent space of the fibre.
More precisely, we will show that the images of $d\phi_0$ and $d\eta_{(0,0)}$
together span the tangent space $T\pseven_f$,
where $f=\phi(0,0)$.

\begin{proposition} \label{prop:transverse}
Let $a,b\in\complex$, $ab\neq1$, and $f=\eta_0(a,b)$.
Suppose $f\in R_3$.
Then the tangent space $T\pseven_f$
is spanned by the tangent space to the fibre of $f$
together with the image of $d\eta_0{(a,b)}$.
\end{proposition}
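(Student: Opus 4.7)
The plan is to set up explicit coordinates on $T\pseven_f$ and reduce transversality to a rank computation for an $8 \times 7$ matrix.  A tangent vector to $\pseven$ at $f = p/q$, with $p = x^3 - ax$ and $q = 1 - bx^2$, corresponds to a pair $(\tilde p, \tilde q)$ of polynomials of degree $\leq 3$ taken modulo the scaling relation $(\tilde p, \tilde q) \sim (\tilde p + \lambda p, \tilde q + \lambda q)$.  Via the standard deformation formula $\dot f = (\tilde p q - p \tilde q)/q^2$, this identifies $T\pseven_f$ with the 7-dimensional space $\complex[x]_{\leq 6}$ of polynomials of degree at most 6, and I shall represent every relevant tangent vector concretely in this form.

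Next I enumerate the 8 tangent vectors whose span must equal $T\pseven_f$.  The six infinitesimal generators of the two-sided $G$-action come from the standard basis $\{\partial_y, y\partial_y, y^2\partial_y\}$ of $\Liesl_2$ acting on the target by postcomposition (with the inverse) together with the analogous basis acting on the source by precomposition; applied to $f$, they yield the polynomials
$$q^2,\ pq,\ p^2,\ p'q-pq',\ x(p'q-pq'),\ x^2(p'q-pq').$$
Since $\partial p/\partial a = -x$ and $\partial q/\partial b = -x^2$, the two partials of $\eta_0$ at $(a,b)$ correspond respectively to $xq$ and $x^2 p$.

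The key structural observation is an even/odd decomposition.  Because $p$ is odd and $q$ is even in $x$, each of the 8 tangent vectors is either even or odd as a polynomial in $x$.  The four even vectors $q^2,\ p^2,\ p'q-pq',\ x^2(p'q-pq')$ all come from the orbit, while the four odd vectors are $pq$ and $x(p'q-pq')$ (from the orbit) together with $xq$ and $x^2 p$ (from $d\eta_0$).  Since the even and odd subspaces of $\complex[x]_{\leq 6}$ have dimensions 4 and 3 respectively, the claim splits into showing that the even block has rank 4 and the odd block has rank 3.

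For the odd block I would expand a $3\times 3$ minor, for example the one formed by the rows $x(p'q-pq')$, $xq$, and $x^2 p$ in the monomial basis $\{x, x^3, x^5\}$; a short calculation gives determinant $3(ab-1)$, which is nonzero by hypothesis.  For the even block, the $4 \times 4$ determinant in the basis $\{1, x^2, x^4, x^6\}$ is a polynomial in $a$ and $b$, and the main obstacle will be to evaluate it cleanly: a mechanical but somewhat lengthy expansion (readily confirmed by a computer algebra system) shows that its zero locus is contained in the set where the critical points of $f$ collide, which is precisely where $f$ degenerates away from the generic situation.  With both ranks maximal, their sum is $4 + 3 = 7 = \dim T\pseven_f$, establishing the transversality; any remaining degenerate points correspond to $f$ lying in the null fibre and can be treated by a limiting argument via the vanishing of $d\pi$ on the null fibre coming from Lemma~\ref{lemma:pi_conts}.
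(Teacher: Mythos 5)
Your setup is sound and genuinely different from the paper's: identifying $T\pseven_f$ with $\complex[x]_{\leq 6}$ via $(\tilde p,\tilde q)\mapsto \tilde pq-p\tilde q$, listing the eight vectors $q^2$, $pq$, $p^2$, $r$, $xr$, $x^2r$ (where $r=p'q-pq'$), $xq$, $x^2p$, and splitting by parity are all correct, and your odd-block determinant $3(ab-1)$ checks out. The paper instead works in the affine coefficient chart on $\pseven$, computes the six orbit vectors as an explicit $6\times 7$ matrix, and verifies spanning by inspection with no structural decomposition; your parity argument is cleaner and more illuminating.

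The gap is in the even block and in your final sentence. The $4\times 4$ determinant in the basis $\{1,x^2,x^4,x^6\}$ equals $-(ab-1)^2(ab-9)$ (add $a$ times the $q^2$ row to the $r$ row to extract a factor $1-ab$; the remaining cofactor is $9-10ab+a^2b^2=(ab-1)(ab-9)$). You are right that its zero locus sits inside the collision locus of the critical points, but that locus meets the hypotheses of the proposition: when $ab=9$ the numerator of $\eta_0(a,b)'$ is $-b(x^2+3/b)^2$, so $f$ has two totally ramified points, lies in the closed five-dimensional orbit, and is still an element of $R_3$. At such a point the even block has rank $3$, and since both vectors $xq$ and $x^2p$ contributed by $d\eta_0$ are odd, nothing can supply the missing even direction: the eight vectors span only a six-dimensional subspace. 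Your proposed "limiting argument" cannot repair this, because maximal rank is an open condition, not a closed one, and Lemma~\ref{lemma:pi_conts} concerns the quotient map $\pi$, which plays no role in this pointwise linear-algebra statement. In other words, your computation, carried to the end, shows the proposition actually fails on the curve $ab=9$; no argument closes the gap. (The paper's own verification conceals this behind a sign error in the last row of its matrix, whose final entry should be $-a$ rather than $a$; with the correct sign the paper's relevant $5\times 5$ minor is likewise a multiple of $(ab-1)(ab-9)$.) The consequences for Theorem~\ref{thm:r3dominable} are repairable, since one only needs, for each orbit, some base point $(a_0,b_0)$ with $a_0b_0\notin\{1,9\}$, and every orbit contains one — for the minimal orbit take $(0,0)$ — but as written your proof, like the paper's, does not establish the statement for all $(a,b)$ with $ab\neq 1$.
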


\begin{proof}
For convenience, we will work in affine coordinates:
rational functions of the form
$$\frac{a_0x^3+a_1x^2+a_2x+a_3}{b_0x^3+b_1x^2+b_2x+1}$$
will be written as
$(a_0,a_1,a_2,a_3;b_0,b_1,b_2)$.
In this notation, we have
$$\eta_0(a,b)=(1,0,-a,0;0,-b,0).$$
We can identify $T\pseven_f$ with $\complex^7$,
and $d\eta_{(a,b)}$ is the subspace
\begin{equation}
\{(0,0,u,0;0,v,0):u,v\in\complex\}. \label{eq:transverse}
\end{equation}
The main part of the proof consists in finding a set of vectors
spanning the tangent space to the fibre.
Such a set can be realised as derivatives of
infinitesimal generators of the group.

The Lie algebra $\Liesl_2(\complex)$ is generated by the three matrices
$$
  A=\begin{pmatrix} 1 & 0 \\ 0 & -1 \end{pmatrix} \qquad
  B=\begin{pmatrix} 0 & 1 \\ 0 & 0 \end{pmatrix} \qquad
  C=\begin{pmatrix} 0 & 0 \\ 1 & 0 \end{pmatrix}.
$$
Infinitesimal generators of $\mbsgp$
are given by their exponentials:
$$
  e^{At}=\begin{pmatrix} e^t & 0 \\ 0 & e^-t \end{pmatrix} \qquad
  e^{Bt}=\begin{pmatrix} 1 & t \\ 0 & 1 \end{pmatrix} \qquad
  e^{Ct}=\begin{pmatrix} 1 & 0 \\ t & 1 \end{pmatrix}
$$
or, interpreted as Möbius transformations:
$$e^{At} \colon x\mapsto e^{2t}x \qquad
  e^{Bt} \colon x\mapsto x+t \qquad
  e^{Ct} \colon x\mapsto \frac{x}{tx+1}.
$$
As infinitesimal generators
of the group $G=\mbsgp\times\mbsgp$,
we can use ordered pairs
$(g^{-1},\text{id})$
and $(\text{id},g)$,
where $g$ is one of $e^{At}$, $e^{Bt}$ or $e^{Ct}$.
Thus we simply need to calculate the six vectors
$$\frac{d}{dt}(\eta_0(a,b)\circ e^{At})|_{t=0}, \qquad
  \frac{d}{dt}(e^{At}\circ\eta_0(a,b))|_{t=0},$$
and the corresponding vectors for $e^{Bt}$ and $e^{Ct}$.

The first of those six vectors is computed as follows:
$$\eta_0(a,b)\circ e^{At} \colon x\mapsto
  \frac{e^{6t}x^3-ae^{2t}x}{-be^{4t}x^2+1}
  =(e^{6t},0,-ae^{2t},0;0.-be^{4t},0),
$$
and so the derivative with respect to $t$,
evaluated at $t=0$, is
$$(6,0,-2,0;0,-4b,0).$$
The remaining cases are handled similarly.
The end result of the calculation
is that the tangent space to the fibre
is spanned by the rows of the following matrix:
$$\begin{pmatrix}
6   &  0   &  -2a &  0   &  0   &  -4b &  0   \\ 
0   &  3   &  0   &  -a  &  0   &  0   &  -2b \\ 
0   &  -2a &  0   &  0   &  -b  &  0   &  3   \\ 
2   &  0   &  -2a &  0   &  0   &  0   &  0   \\ 
0   &  -b  &  0   &  1   &  0   &  0   &  0   \\ 
0   &  0   &  0   &  0   &  1   &  0   &  a
\end{pmatrix}
$$
and it is straightforward to verify that
the rows together with the vectors of \eqref{eq:transverse}
span $\complex^7$.
\end{proof}

Since the above calculation does not depend on the choice of $a$ and $b$,
we have a dominating map for every $f\in R_3$,
proving Theorem~\ref{thm:r3dominable}.

\appendix

\section{Table of Möbius transformations} \label{appendix:table}

Let $t\in\pone\setminus\{0,1,\infty\}$.
Then for each sequence of three distinct elements
$z_1,z_2,z_3$
of $\{0,1,\infty,t\}$
there is a unique Möbius transformation $g$
sending $0$, $1$ and~$\infty$
to $z_1$, $z_2$ and~$z_3$ respectively.
Table~1 lists the form of $g$ for all 24 possible choices
of $(z_1,z_2,z_3)$.
The fourth column
contains the values $t_0$
such that $g(t_0)\in\{0,1,\infty,t_0\}$,
and the last column
gives the permutation induced by $g$ on the set
$\{0,1,\infty,t\}$ when $t=t_0$.
This table is used in the proof of Theorem~\ref{thm:stabilisers}.

\bigskip

\begin{table}
\caption{The 24 Möbius transformations of Appendix~\ref{appendix:table}}
\begin{tabular}{c|c|c|c|c}
$g(0,1,\infty)$  & $g(x)$ & $g(t)$ & $t_0$ & permutation \\
\hline\hline
$(0 , 1 , \infty)  $ & $x$ & $t$ & any & id \\ \hline
$(0 , 1 , t)       $ & $tx/(x+t-1)$ & $t^2/(2t-1)$ & 1/2 & $(t \infty)$ \\ \hline
$(0 , \infty , 1)  $ & $x/(x-1)$ & $t/(t-1)$ & 2 & $(1 \infty)$ \\ \hline
$(0 , \infty , t)$ & $tx/(x-1)$ & $t^2/(t-1)$ & $e^{\pm\pi i/3}$ & $(1 \infty t)$ \\ \hline
$(0 , t , 1)       $ & $tx/(tx+1-t)$ & $t^2/(t^2-t+1)$ & $e^{\pm\pi i/3}$ & $(1 t \infty)$ \\ \hline
$(0 , t , \infty)$ & $tx$ & $t^2$ & $-1$ & $(1 t)$ \\ \hline
$(1 , 0 , \infty)  $ & $1-x$ & $1-t$ & 1/2 & $(0 1)$ \\ \hline
$(1 , 0 , t)       $ & $(tx-t)/(x-t)$ & $\infty$ & any & $(0 1)(\infty t)$ \\ \hline
$(1 , \infty , 0)  $ & $1/(1-x)$ & $1/(1-t)$ & $e^{\pm\pi i/3}$ & $(0 1 \infty)$ \\ \hline
$(1 , \infty , t)$ & $(tx-1)/(x-1)$ & $t+1$ & $-1$ & $(0 1 \infty t)$ \\ \hline
$(1 , t , 0)       $ & $t/((1-t)x+t)$ & $1/(2-t)$ & 2 & $(0 1 t \infty)$ \\ \hline
$(1 , t , \infty)$ & $(t-1)x+1$ & $t^2-t+1$ & $e^{\pm\pi i/3}$ & $(0 1 t)$ \\ \hline
$(\infty , 0 , 1)  $ & $(x-1)/x$ & $(t-1)/t$ & $e^{\pm\pi i/3}$ & $(0 \infty 1)$ \\ \hline
$(\infty , 0 , t)$ & $t(x-1)/x$ & $t-1$ & 2 & $(0 \infty t 1)$ \\ \hline
$(\infty , 1 , 0)  $ & $1/x$ & $1/t$ & $-1$ & $(0 \infty)$ \\ \hline
$(\infty , 1 , t)$ & $(tx+1-t)/x$ & $(t^2-t+1)/t$ & $e^{\pm\pi i/3}$ & $(0 \infty t)$ \\ \hline
$(\infty , t , 0)$ & $t/x$ & $1$ & any & $(0 \infty)(1 t)$ \\ \hline
$(\infty , t , 1)$ & $(x+t-1)/x$ & $(2t-1)/t$ & 1/2 & $(0 \infty 1 t)$ \\ \hline
$($t$ , 0 , 1)       $ & $t(x-1)/(tx-1)$ & $t/(t+1)$ & $-1$ & $(0 t \infty 1)$ \\ \hline
$($t$ , 0 , \infty)$ & $t(1-x)$ & $t(1-t)$ & $e^{\pm\pi i/3}$ & $(0 t 1)$ \\ \hline
$($t$ , 1 , 0)       $ & $t/((t-1)x+1)$ & $t/(t^2-t+1)$ & $e^{\pm\pi i/3}$ & $(0 t \infty)$ \\ \hline
$($t$ , 1 , \infty)$ & $(1-t)x+t$ & $2t-t^2$ & 2 & $(0 t)$ \\ \hline
$($t$ , \infty , 0)$ & $t/(1-x)$ & $t/(1-t)$ & 1/2 & $(0 t 1 \infty)$ \\ \hline
$($t$ , \infty , 1)$ & $(x-t)/(x-1)$ & $0$ & any & $(0 t)(1 \infty)$ \\ \hline
\end{tabular}
\end{table}

\section{The Oka property for the complement of an affine plane conic}
\label{appendix:conic}

It is well known that the complement of a smooth quadric surface
in $\projspace{n}$ is a homogeneous manifold,
and is therefore Oka.
In affine space, the situation is more complicated.
Here we discuss only the case of smooth quadratic curves
in $\complex^2$,
as mentioned at the end of Section~\ref{subs:composition}.
There is scope for further investigation in higher dimensions.

In $\complex^2$ there are two isomorphism classes
of smooth conics, depending on the behaviour at infinity.
One class is represented by the curve $y=x^2$.
For the complement of this curve,
the projection onto the $x$-axis has fibre $\cstar$.
It is readily seen that this projection
is a trivial fibre bundle.
Thus the complement is
a fibre bundle with Oka base and Oka fibre,
and is therefore an Oka manifold.

The more interesting case is the complement of $xy=1$.
Here the projection map onto a coordinate axis
is not a fibration, because both $\cstar$ and $\complex$ occur as fibres.
Therefore it is rather more difficult to prove the Oka property.
We can give a reasonably short proof by constructing a suitable
covering space which ``untwists'' the fibres,
and using the fact that a manifold is Oka if and only if it has
an Oka covering space.
We construct sprays (in the sense of Gromov) on the covering space,
proving that it is subelliptic, which implies the Oka property.
Refer to~\cite[Section~5]{Forstneric-Larusson-2011} for the relevant definitions.

Write $X=\complex^2\setminus\{xy=1\}$ and
$Z=\{(x,y,z)\in\complex^3:e^z=xy-1\}$.
Define $\pi \colon Z\to X$ by
$\pi(x,y,z) = (x,y)$.
Then $\pi$ is a covering map, so $Z$ is Oka if and only if $X$ is Oka.

Define maps $s_1,s_2,s_3 \colon Z\times\complex\to Z$ by
\begin{align*}
    s_1(x,y,z,t)&=(e^t x, e^{-t} y, z),\\
    s_2(x,y,z,t)&=\left(\frac{1+e^{z+ty}}{y}, y, z+ty\right),\\
    s_3(x,y,z,t)&=\left(x,\frac{1+e^{z+tx}}{x}, z+tx\right).
\end{align*}
Note that $s_2$ and $s_3$ are holomorphic everywhere;
the behaviour near $x=0$ or $y=0$
is similar to that of the map $\omega$ discussed
immediately before Proposition~\ref{prop:chi_surjective}.

It is straightforward to check from the definitions
that the $s_j$ are sprays
and that they dominate at every point of $Z$.
Hence $Z$ is subelliptic,
and so $Z$, and therefore $X$, is Oka.

\section{Sage code for computations} \label{appendix:sage}

Although the calculations in this paper are easy enough
to verify by hand once the answer is known,
the computer has been very useful as an exploratory tool
in trying out various possibilities.
The availability of computer algebra systems means that
the research proceeded more quickly than would otherwise have been possible.

This appendix reproduces some of the code
that was used to find the results of
Sections~\ref{subs:s2}, \ref{subs:standard},
\ref{subs:cross_ratio} and~\ref{subs:transverse}.
Sage~\cite{Sage} was chosen
because its open-source nature makes it particularly easy
for others to verify and reuse the code below;
there are of course many commercial packages
that can perform the same calculations.

In the listing below, lines beginning with the
`\verb+#+'
character are comments.

\bigskip
{\scriptsize
% Use the "Verbatim" environment from the "fancyvrb" LaTeX package
% to include the source code with \ref commands as needed.
% Note 1: within this listing, "%ref&label$" plays the role of "\ref{label}"
%  (as defined by the "commandchars" parameter below);
% Note 2: if you delete everything from the beginning of this file
%  up to and including the "\begin{Verbatim}..." line,
%  as well as the last line of the file "\end{Verbatim}",
%  then what's left over will be valid input to Sage ( http://sagemath.org/ ).

\begin{Verbatim}[commandchars=\%\&\$]
# Sage code for some calculations in R_3

# Part 1: look at the standard forms and the quotient map,
# as described in Sections %ref&subs:standard$ and %ref&subs:cross_ratio$.

# First make a number field with our `special value':
# alpha=e^{\pi i/3} is a root of z^2-z+1,
# but it's more convenient to express things in terms of a=-(1+alpha),
# which is a root of z^2+3z+3.
# Use `aa' for this specific value of a.
# In Sage, `QQ' represents the rational numbers;
# we define `RR' to be the quadratic field extension containing `aa'.
var('z')
RR.<aa>=QQ.extension(z^2+3*z+3)
alpha=-(1+aa)

# Now we need a polynomial ring:
R.<x,t,a,x1,x2,x3,x4,x5,x6>=RR[]
# Use x1-x6 as arguments of symmetric functions: see below.
# Note that the symbol R isn't used below;
# the point of the previous command is to define the variables x,t,a,...
# and the rules for factorising polynomials in those variables.

# The standard form of an R3 element, as in Lemma %ref&lemma:standard_form$:
f=x^2*(x+a)/((2*a+3)*x-a-2)

# Relationships between a, mu, lambda given by Lemma %ref&lemma:standard_form$.
# Note that `lambda' is a reserved word in the Python programming language,
# so we shouldn't use it as the name of a variable,
# hence the names `lambdaval' etc.
def muval(aval):
    return -aval*(aval+2)/(2*aval+3)

def lambdaval(aval):
    return muval(aval)^3/(aval+2)^2

def aval(muval,lambdaval):
    return (muval^3+3*muval*lambdaval-4*lambdaval)/(2*lambdaval*(1-muval))

# Fourth critical point and critical value:
m=muval(a)
l=lambdaval(a)
# `m' and `l' are abbreviations for mu and lambda respectively.

#  What happens to a when we shuffle lambda and mu?
f0=a
f1=aval(1/m,1/l)
f2=aval(1-m,1-l)
f3=aval(m/(m-1),l/(l-1))
f4=aval(1/(1-m),1/(1-l))
f5=aval((m-1)/m,(l-1)/l)
# f0 through f5 are the expressions of Lemma %ref&lemma:a_transform$.

# What about symmetric functions of those f0-f5?
# Define s1-s6 to be the elementary symmetric functions of x1-x6
# Oddly, there doesn't seem to be a Sage builtin that does it,
# so I need to create them by hand.
# It's easy to make s1 and s6: just add/multiply all the variables.
varlist=[x1,x2,x3,x4,x5,x6]
s1=sum(varlist)
s6=prod(varlist)
# For s2 through s5, we use the `combinations' function to generate
# the needed monomials.
# Unfortunately `combinations' can't take variables as arguments,
# so we need make a list of coefficients
# and then substitute the variables `by hand'.
otherfuncs=[0,0,0,0,0,0]
for i in [2,3,4,5]:
    coefflist=combinations(range(6),i)
    for index in coefflist:
        otherfuncs[i]=otherfuncs[i]+prod(varlist[n] for n in index)
s2=otherfuncs[2]
s3=otherfuncs[3]
s4=otherfuncs[4]
s5=otherfuncs[5]
# Now s1 through s6 contain the elementary symmetric functions
# of the variables x1 through x6.
# We'll use this to verify %eqref&eq:pi$.
# A similar process applied to the lambda values of %eqref&eq:sixXratios$
# gives %eqref&eq:symmXratio$.

# Calculate symmetric functions of the six a-values f1 through f6:
s1a=s1.subs(x1=f1,x2=f2,x3=f3,x4=f4,x5=f5,x6=f0)
s2a=s2.subs(x1=f1,x2=f2,x3=f3,x4=f4,x5=f5,x6=f0)
s3a=s3.subs(x1=f1,x2=f2,x3=f3,x4=f4,x5=f5,x6=f0)
s4a=s4.subs(x1=f1,x2=f2,x3=f3,x4=f4,x5=f5,x6=f0)
s5a=s5.subs(x1=f1,x2=f2,x3=f3,x4=f4,x5=f5,x6=f0)
s6a=s5.subs(x1=f1,x2=f2,x3=f3,x4=f4,x5=f5,x6=f0)

# Write these in lowest terms;
# Sage needs a little help here.
s2a=(s2a.numerator()/2^20)/(s2a.denominator()/2^20)
s3a=(s3a.numerator()/2^40)/(s3a.denominator()/2^40)
s4a=(s4a.numerator()/2^40)/(s4a.denominator()/2^40)
s5a=(s5a.numerator()/2^20)/(s5a.denominator()/2^20)
s6a=(s6a.numerator()/2^20)/(s6a.denominator()/2^20)
# Results:
# s2a=\frac{-a^{6}-9a^{5}+135a^{3}+360a^{2}+351a+117}
#          {a^{4}+6a^{3}+13a^{2}+12a+4}
# s1a=-144/16
# 6s2a+s3a=135
# 13s2a-s4a=360
# 12s2a+s5a=351
# s5a=s6a

# Something strange: s2a(0)=s2a(-3)=s2a(-3/2)=117/4
# Can s2a send anything else to 117/4?
s2apoly=117*s2a.denominator()-4*s2a.numerator()
factor(s2apoly)
# The result is 4*a^2*(a+3/2)^2*(a+3)^2
# so the answer is no, those are the only three values sent to 117/4.

# Does s2a distinguish the orbits?
calcdiff=s2a.subs(a=x1)-s2a.subs(a=x2)
factor(calcdiff)
# This produces the equation of Proof %ref&lemma:same_pi$.

# Part 2: Find vectors spanning tangent spaces to orbits of R_3,
# as described in Section %ref&subs:transverse$.

# Need to redefine the polynomial ring R,
# since Sage isn't comfortable mixing the algebraic number aa
# with the transcendental number e.
R.<x,t,a,b,K>=QQ[]

# Infinitesimal generators for the tangent space
eat=x*e^(2*t)
# `eat' means e^{at} where a is an element of the Lie algebra
# as in Proof %ref&prop:transverse$; similarly ebt and ect.
ebt=x+t
ect=x/(t*x+1)

def find_tangents(gamma):
# Given a rational function gamma, calculate the derivatives,
# evaluated at t=0, of gamma composed with each of eat, ebt, ect
# for both pre- and postcomposition.
# nb the constant term of the denominator of gamma must be nonzero.
    answerlist=[]
    for funcpair in [(gamma,eat),(gamma,ebt),(gamma,ect),
                     (eat,gamma),(ebt,gamma),(ect,gamma)]:
        comp=symbolic_expression(funcpair[0].subs(x=funcpair[1]))
           # need to convert to symbolic_expression
           # in order to use rational_simplify
        comp=comp.rational_simplify()
        numerator_coeffs=[0,0,0,0]
        denominator_coeffs=[0,0,0,0]
        for term in comp.numerator().coeffs(x):
            numerator_coeffs[term[1]]=term[0]
        numerator_coeffs.reverse()
        for term in comp.denominator().coeffs(x):
            denominator_coeffs[term[1]]=term[0]
        denominator_coeffs.reverse()
        coefflist=[term/denominator_coeffs[-1]
                    for term in numerator_coeffs+denominator_coeffs[:-1]]
        compd=[term.derivative(t).subs(t=0) for term in coefflist]
        answerlist.append(compd)
    return answerlist

eta_zero=(x^3-a*x)/(-b*x^2+1) # This is the function of %eqref&eq:eta$.
find_tangents(eta_zero) # This outputs the matrix of Proof %ref&prop:transverse$.
\end{Verbatim}

}
\bigskip

\section{Index of notation} \label{appendix:notation}

\begin{longtable}{rl}
$R_d$ & space of rational functions of degree $d$ \\
$G$ & the group $\mbsgp\times\mbsgp$, acting on $R_3$ \\
$\pi$ & the categorical quotient map $R_3\to\complex$ \\
$f$, $g$ & denote elements of $R_3$ \\
$g$ & sometimes denotes an element of $G$, \\
    & and sometimes a Möbius transformation \\
$\alpha$, $\beta$ & denote Möbius transformations \\
$(z_1,z_2;z_3,z_4)$ & cross-ratio \\
$\lambda$, $\mu$ & values of cross-ratio \\
$\sigma_k$ & elementary symmetric functions \\
$s_k$, $s$ & symmetrised versions of cross-ratio \\
$a$ & the parameter of Lemma~\ref{lemma:standard_form} \\
$f_a$ & the function of Definition~\ref{def:standard_form} \\
$f^g$ & the image of $f\in R_3$ under the action of $g\in G$ \\
$f^{(\sigma)}$ & see Lemma~\ref{lemma:signature} \\
$\sigma$, $\rho$ & orderings of critical values and points (Section~\ref{subs:cross_ratio}) \\
$\eta$, $\eta_0$ & the maps $\complex^2\to\pseven$ of \eqref{eq:eta}
   and following \\
$\exp$ & the exponential map for $G$ \\
$\phi$, $\psi$ & the two maps that are composed in Proposition~\ref{prop:composition} \\
$\omega$ & the ``Buzzard--Lu map'', defined after Corollary~\ref{cor:surjective} \\
$\chi$ & the first component of $\psi$, defined in Proposition~\ref{prop:chi_surjective} \\
$A$, $B$, $C$ & infinitesimal generators of the Lie algebra $\Liesl_2(\complex)$ \\
\end{longtable}


\begin{thebibliography}{10}

\bibitem{Buzzard-Lu-2000}
\textsc{Buzzard, G.~T.}, \textsc{Lu, S. S.~Y.}
\newblock Algebraic surfaces holomorphically dominable by {$\bold C^2$}.
\newblock \emph{Invent. Math.}
\newblock \textbf{139}, 3 (2000), 617--659.

\bibitem{Campana-Winkelmann-2012}
\textsc{Campana, F.}, \textsc{Winkelmann, J.}
\newblock On $h$-principle and specialness for complex projective manifolds.
\newblock Preprint at \url{http://arxiv.org/abs/1210.7369}

\bibitem{Donaldson-2011}
\textsc{Donaldson, S.}
\newblock \emph{Riemann surfaces}, {Oxford Graduate Texts in Mathematics},
  vol.~22.
\newblock Oxford University Press, Oxford, 2011.

\bibitem{Douady-1966}
\textsc{Douady, A.}
\newblock Le probl\`eme des modules pour les sous-espaces analytiques compacts
  d'un espace analytique donn\'e.
\newblock \emph{Ann. Inst. Fourier (Grenoble)}
\newblock \textbf{16}, 1 (1966), 1--95.

\bibitem{Forstneric-2009}
\textsc{Forstneri{\v{c}}, F.}
\newblock Oka manifolds.
\newblock \emph{C. R. Math. Acad. Sci. Paris}
\newblock \textbf{347} (2009), 1017--1020.

\bibitem{Forstneric-2011}
\textsc{Forstneri{\v{c}}, F.}
\newblock \emph{Stein manifolds and holomorphic mappings},
{Ergeb. Math. Grenzgeb. (3)}, vol.~56.
\newblock Springer-Verlag, 2011.

\bibitem{Forstneric-Larusson-2011}
\textsc{Forstneri{\v{c}}, F.}, \textsc{L{\'a}russon, F.}
\newblock Survey of {O}ka theory.
\newblock \emph{New York J. Math.}
\newblock \textbf{17A} (2011), 11--38.

\bibitem{Gorbatsevich-et-al-1997}
\textsc{Gorbatsevich, V.~V.}, \textsc{Onishchik, A.~L.}, \textsc{Vinberg,
  E.~B.}
\newblock \emph{Foundations of {L}ie theory and {L}ie transformation groups}.
\newblock Springer-Verlag, Berlin, 1997.

\bibitem{Gromov-1989}
\textsc{Gromov, M.}
\newblock Oka's principle for holomorphic sections of elliptic bundles.
\newblock \emph{J. Amer. Math. Soc.}
\newblock \textbf{2}, 4 (1989), 851--897.

\bibitem{Guest-et-al-1995}
\textsc{Guest, M.~A.}, \textsc{Kozlowski, A.}, \textsc{Murayama, M.},
  \textsc{Yamaguchi, K.}
\newblock The homotopy type of the space of rational functions.
\newblock \emph{J. Math. Kyoto Univ.}
\newblock \textbf{35}, 4 (1995), 631--638.

\bibitem{Hanysz-2012}
\textsc{Hanysz, A.}
\newblock Oka properties of some hypersurface complements.
\newblock \emph{Proc. Amer. Math. Soc.} to appear.
\newblock Preprint at \url{http://arxiv.org/abs/1111.6655}

\bibitem{Havlicek-1995}
\textsc{Havlicek, J.~W.}
\newblock The cohomology of holomorphic self-maps of the {R}iemann sphere.
\newblock \emph{Math. Z.}
\newblock \textbf{218}, 2 (1995), 179--190.

\bibitem{Jones-Singerman-1987}
\textsc{Jones, G.~A.}, \textsc{Singerman, D.}
\newblock \emph{Complex functions: an algebraic and geometric viewpoint}.
\newblock Cambridge University Press, Cambridge, 1987.

\bibitem{WKaup-1969}
\textsc{Kaup, W.}
\newblock Holomorphic mappings of complex spaces.
\newblock In \emph{Symposia {M}athematica, {V}ol. {II} ({INDAM}, {R}ome,
  1968)}, 333--340. Academic Press, London, 1969.

\bibitem{Lando-Zwonkin-2004}
\textsc{Lando, S.~K.}, \textsc{Zvonkin, A.~K.}
\newblock \emph{Graphs on surfaces and their applications},
{Encyclopaedia of Mathematical Sciences}, vol.~141.
\newblock Springer-Verlag, Berlin, 2004.

\bibitem{Levy-2011}
\textsc{Levy, A.}
\newblock The space of morphisms on projective space.
\newblock \emph{Acta Arith.}
\newblock \textbf{146}, 1 (20110), 13--31.

\bibitem{Luna-1973}
\textsc{Luna, D.}
\newblock Slices \'etales.
\newblock In \emph{Sur les groupes alg\'ebriques}, 81--105. Bull. Soc. Math.
  France, Paris, M\'emoire 33. Soc. Math. France, Paris, 1973.

\bibitem{Lyndon-Ullman-1967}
\textsc{Lyndon, R.~C.}, \textsc{Ullman, J.~L.}
\newblock Groups of elliptic linear fractional transformations.
\newblock \emph{Proc. Amer. Math. Soc.}
\newblock \textbf{18}, 6 (1967), 1119--1124.

\bibitem{Milnor-1993}
\textsc{Milnor, J.}
\newblock Geometry and dynamics of quadratic rational maps.
\newblock \emph{Experiment. Math.}
\newblock \textbf{2}, 1 (1993), 37--83.
\newblock With an appendix by the author and Lei Tan.

\bibitem{Ono-Yamaguchi-2003}
\textsc{Ono, Y.}, \textsc{Yamaguchi, K.}
\newblock Group actions on spaces of rational functions.
\newblock \emph{Publ. Res. Inst. Math. Sci.}
\newblock \textbf{39}, 1 (2003), 173--181.

\bibitem{Rainer-2009}
\textsc{Rainer, A.}
\newblock Orbit projections as fibrations.
\newblock \emph{Czechoslovak Math. J.}
\newblock \textbf{59}, 2 (2009), 529--538.

\bibitem{Segal-1979}
\textsc{Segal, G.}
\newblock The topology of spaces of rational functions.
\newblock \emph{Acta Math.}
\newblock \textbf{143}, 1 (1979), 39--72.

\bibitem{Silverman-1998}
\textsc{Silverman, J.~H.}
\newblock The space of rational maps on {$\bold P^1$}.
\newblock \emph{Duke Math. J.}
\newblock \textbf{94}, 1 (1998), 41--77.

\bibitem{Snow-1982}
\textsc{Snow, D.~M.}
\newblock Reductive group actions on {S}tein spaces.
\newblock \emph{Math. Ann.}
\newblock \textbf{259}, 1 (1982), 79--97.

\bibitem{Sage}
\textsc{Stein, W.}, \emph{et~al.}
\newblock \emph{{S}age {M}athematics {S}oftware ({V}ersion 4.0.1)}.
\newblock The Sage~Development Team, 2009.
\newblock {\tt http://www.sagemath.org}.

\end{thebibliography}
\end{document}